\theoremstyle{plain}
\newtheorem{theorem}{Theorem}[section]
\newtheorem{proposition}[theorem]{Proposition}
\newtheorem{corollary}[theorem]{Corollary}
\newtheorem{lemma}[theorem]{Lemma}
\newtheorem{definition}[theorem]{Definition}
\newtheorem{problem}[theorem]{Problem}
\theoremstyle{remark}
\newtheorem{remark}[theorem]{Remark}
\newtheorem{example}[theorem]{Example}
\newcommand{\Harm}{\mathop{\mathrm{Harm}}\nolimits}
\newcommand{\Hom}{\mathop{\mathrm{Hom}}\nolimits}
\date{\today}
\author{Kenji Tanino}
\address{Graduate School of System Informatics, Kobe University \\
1-1 Riokkodai, Nada, Kobe, Hyogo, 657-8501 \\
Japan}
\email{kenji303t@icloud.com}
\author{Tomoki~Tamaru}
\address{Graduate School of System Informatics, Kobe University \\
1-1 Riokkodai, Nada, Kobe, Hyogo, 657-8501 \\
Japan}
\email{249x053x@stu.kobe-u.ac.jp}
\author{Masatake Hirao}
\address{School of Information and Science Technology, Aichi Prefectural University \\
1522-3 Ibaragabasama, Nagakute, Aichi 480-1198 \\
Japan}
\email{hirao@ist.aichi-pu.ac.jp}
\author{Masanori Sawa}
\address{Graduate School of System Informatics, Kobe University \\
1-1 Riokkodai, Nada, Kobe, Hyogo, 657-8501 \\
Japan}
\email{sawa@people.kobe-u.ac.jp}
\thanks{This research is supported in part by Grant-in-Aid for Scientific Research (C) 24K06871 of the Japan Society for the Promotion of Science (JSPS), and by the Early Support Program for Grant-in-Aid for Scientific Research of Kobe University.}
\keywords{Cubature formula, explicit construction, Hilbert identity, integral lattice, simplicial design, spherical design}
\subjclass[2010]{Primary 05E99, 65D32 Secondary 11E76}
\begin{document}
\title{More on the corner-vector construction for spherical designs}

\begin{abstract}
This paper explores a full generalization of the classical corner-vector method for constructing weighted spherical designs, which we call the {\it generalized corner-vector method}.
First we establish a uniform upper bound for the degree of designs obtained from the proposed method.
Our proof is a hybrid argument that employs techniques in analysis and combinatorics, especially a famous result by Xu (1998) on the interrelation between spherical designs and simplicial designs, and the cross-ratio comparison method for Hilbert identities introduced by Nozaki and Sawa~(2013).
We extensively study conditions for the existence of designs obtained from our method, and present many curious examples of degree $7$ through $13$, some of which are, to our surprise, characterized in terms of integral lattices.
\end{abstract}

\maketitle

\section{Introduction} \label{sect:Intro}

Delsarte et al.~\cite{DGS1977} introduces \textit{spherical design} 
as a spherical analogue of balanced incomplete block (BIB) design in applied statistics and combinatorial $t$-design in discrete mathematics.
A finite subset $X$ of a unit sphere $\mathbb{S}^{n-1}$ is defined to be a {\it (weighted) spherical $t$-design}, if for every polynomial of degree at most $t$, the (weighted) average of the function values on $X$ is equal to the integration with respect to the surface measure $\rho$.
Spherical design is closely related to various objects including, but not limited to, spherical cubature in numerical analysis, isometric embeddings of the classical finite-dimensional Banach spaces in functional analysis and optimum experimental designs in applied statistics (see, for example,~\cite{LV1993,X1998} and~\cite{SHY2019}).

One of the most fundamental problems in the spherical design theory is the construction as well as
existence of designs.
Bondarenko et al.~\cite{BRV2013} shows a general existence theorem for designs of small sizes, and gives
an affirmative answer to the Korevaar-Meyers conjecture~\cite{KM1993}.
Although significant progress has been made in the establishment of existence theorems, recent developments of the construction theory have been comparatively modest in scale, even for designs of small degrees.

There are two classical approaches to the construction of spherical designs: {\it product rule} and {\it invariant rule}.
Roughly speaking, the former is a recursive approach that generates a spherical design from designs on simpler spaces, and the latter is an algebraic approach, based on the invariant theory for polynomial spaces, that directly generates a highly symmetric design from group orbits.
A great advantage of the product rule is its simplicity, however, most of the known product rules strongly depend on the assumption of the existence of interval designs for Gegenbauer measure $(1-t^2)^{\lambda-1/2}dt, \lambda>-1/2$ (see, for example,~\cite{Bajnok1992,RB1991,X2022}).
As pointed out by Xiang~\cite{X2022}, the construction of such interval designs is still an open problem.
On the other hand, the invariant rule is a more sophisticated approach, which makes it possible to provide an explicit construction of spherical designs.
We refer the reader to Xiang~\cite{X2022} for discussion on the explicitness of designs.
A major difficulty of the invariant rule is the selection of finitely many points whose group orbits form a design of a given degree.
A traditional and popular criterion is the utilization of {\it corner vectors} for the hyperoctahedral group $B_n$. 

The aim of this paper is to make further progress on the {\it corner-vector method} by considering designs of more general type
\begin{equation}
   \label{eq:invariant0}
   \frac{1}{|\mathbb{S}^{n-1}|} \int_{\mathbb{S}^{n-1}} f(y) d\rho
   =
   \sum_{i=1}^k W_i \sum_{x \in v_{a_i,s_i}^{B_n}} f(x) \
   \text{for every polynomial $f$ of degree at most $t$}.
\end{equation}
We here denote by $v_{a_i,s_i}^{B_n}$ the $B_n$-orbit of  {\it generalized corner vector}
\begin{equation*}
   v_{a_i,s_i} = \frac{1}{\sqrt{a_i^2+s_i}}(a_i,1\ldots,1,0,\ldots,0) \in \mathbb{S}^{n-1}, \quad a_i > 0, \quad s_i \in \{0,1,\ldots,n-1\}.
\end{equation*}
A point $v_{a,s}$ with $a = 1$ is just an $(s+1)$-dimensional \textit{corner vector}, namely the barycentre of 
an $(s+1)$-dimensional face of an $n$-dimensional cross-polytope (see Definition~\ref{def:corner1}).
The corner-vector method is a traditional way of constructing designs with only corner vectors.
While this method can respond to the preference for simplicity of construction, it has the drawback that the generated designs have degree at most $7$ (see Bajnok~\cite{Bajnok2006}).

A classical result on spherical designs is that the vertex set of a regular $(t+1)$-gon is a $t$-design on $\mathbb{S}^1$ (see, for example,~\cite{Hong1982}).
This means that generalized corner vectors can generate designs of any degree in dimension $2$,
since if $t \equiv 0 \pmod{4}$, the vertex set consists of the $B_2$-orbits of $t/4$ generalized corner vectors.
Heo and Xu~\cite[Theorem 2.1]{HX2001} establishes a systematic treatment of the three-dimensional case, and in particular  
shows that the degree of the generated design is uniformly upper bounded by $17$.
Schur~\cite[p.721]{D1923} makes
the first advance for $n \ge 4$, who discovers
a weighted $11$-design on $\mathbb{S}^3$ with a single {\it proper orbit}, namely an orbit $v_{a,s}^{B_4}$ with $a \neq 1$.
The original version of Schur's formula appeared in the context of Waring problem in number theory, as briefly explained in 
Section~\ref{subsect:design}.
Sawa and Xu~\cite{SX2013} discusses
a higher dimensional extension of Schur's formula, and proves
that the degree of any design 
with a single proper orbit is uniformly upper bounded by $11$.
This bound is sharp, since many examples of such $11$-designs have been found in dimensions $3$ through $23$.
Thus a natural question is to ask what happens to designs with two or more
proper orbits.

This paper is organized as follows. Section~\ref{sect:Preliminary} gives preliminaries where we review basic facts on weighted spherical designs, with particular emphasis on the connections to Hilbert identities and simplicial designs.
Sections~\ref{sect:bound} through~\ref{sect:3orbit} are the main body of this paper.
Section~\ref{sect:bound} demonstrates that
for $n \geq 4$,
the degree of any design of type (\ref{eq:invariant0}) is 
uniformly upper bounded by $15$ (Theorem~\ref{thm:main_bound}).
Our proof is a hybrid argument employing techniques in analysis and combinatorics, especially a famous theorem by Xu~\cite{X1998} on the interrelation between spherical and simplicial designs, and 
the \textit{cross-ratio comparison} for
Hilbert identity (see for example~\cite[Theorem 6.6]{NS2013}).
Section~\ref{sect:single} establishes a characterization theorem for $7$-designs of type (\ref{eq:invariant0}) with a single proper orbit (Proposition~\ref{prop:Tanino1}).
As an important corollary of this result, we first show that there exist finitely many such designs including two sporadic examples $v_{2,8}^{B_{16}}$ and $v_{2,11}^{B_{23}}$ (Remark~\ref{thm:Tanino}), and then prove that the degree of all such designs can never be pushed up to $9$ 
(Proposition~\ref{prop:single9design}).
Section~\ref{sect:double} gives a two-orbit version of Proposition~\ref{prop:Tanino1} 
(see Propositions~\ref{prop:Hirao2024-1} and~\ref{prop:Hirao2024-2}), 
and thereby obtain some theoretical results, e.g. a classification of such $9$-designs for $n=3$ (see Table~\ref{tbl:weighted 9-design}).
Section~\ref{sect:3orbit} explores the exhibition of interesting examples of $11$- and $13$-designs with more than $2$ proper orbits.
Finally, Section~\ref{sect:conclusion} is the Conclusion, where some connections between our designs and integral lattices will be also made.

\section{Preliminary}\label{sect:Preliminary}
In this section
we review basic facts concerning spherical designs, with particular emphasis on the connection with Hilbert identity and cubature on simplex, and then quickly overview the background of the {\it corner vector method}.
Some of the materials appearing in this section are available 
in~\cite{SHI2021,Kageyama2019}.

\subsection{Polynomial space}\label{subsect:polynomial}

Let $n$ be a positive integer with $n \ge 2$.
Let $\mathbb{S}^{n-1}$ be the $(n-1)$-dimensional unit sphere centered at the origin, namely, 
\begin{equation*}
\mathbb{S}^{n-1}
= \left\{ (x_1,\ldots,x_n) \in \mathbb{R}^n \mid \sum_{i=1}^n x_i^2=1 \right\}.
\end{equation*}
We denote by $|\mathbb{S}^{n-1}|$ the surface area of $\mathbb{S}^{n-1}$, and by $\rho$ the surface measure on $\mathbb{S}^{n-1}$.
For convention, we write $y^\lambda$ for $y_1^{\lambda_1} \cdots y_n^{\lambda_n}$, where $y = (y_1,\ldots,y_n) \in \mathbb{R}^n$ and $\lambda = (\lambda_1,\ldots,\lambda_n) \in \mathbb{Z}_{\ge 0}^n$.
Then we have
\begin{equation*}
 \frac{1}{|\mathbb{S}^{n-1}|}\int_{\mathbb{S}^{n-1}} y^\lambda d\rho
    = \left\{\begin{array}{cl}
  \frac{ \Gamma(\frac{n}{2}) }{ 2^{ |\lambda|_1 } \Gamma(\frac{ |\lambda|_1 + n }{2}) } \cdot \frac{ \prod_{i=1}^n (\lambda_i)! }{ \prod_{i=1}^n (\frac{\lambda_i}{2})! }    & \text{$\lambda_1 \equiv \cdots \equiv \lambda_n \equiv 0 \pmod{2}$}, \\
0 & \text{otherwise},
\end{array} \right.
\end{equation*}
where $|\lambda|_1 = \lambda_1 + \cdots + \lambda_n$ for $\lambda \in \mathbb{Z}_{\geq 0}^{n}$.

We denote by $\mathcal{P}(\mathbb{R}^n)$ the space of all polynomials with real coefficients on $\mathbb{R}^n$, and by $\mathcal{P}_t(\mathbb{R}^n)$ the subspace of all polynomials of degree at most $t$.
We also denote by $\Hom_t(\mathbb{R}^n)$ or $\Harm_t(\mathbb{R}^n)$, the subspace of $\mathcal{P}_t(\mathbb{R}^n)$ consisting of homogeneous polynomials or harmonic homogeneous polynomials (sometimes called $h$-harmonics) respectively.
In summary,
\begin{equation*}
\begin{split}
\mathcal{P}_t(\mathbb{R}^n) & = \Big\{ \sum_{\substack{ (\lambda_1,\ldots,\lambda_n) \in \mathbb{Z}_{\ge 0}^n \\ \lambda_1+\cdots+\lambda_n \le t}} c_{\lambda_1,\ldots,\lambda_n} x_1^{\lambda_1} \cdots x_n^{\lambda_n} \mid c_{\lambda_1,\ldots,\lambda_n} \in \mathbb{R} \Big\}, \\
    \Hom_t(\mathbb{R}^n)     & = \Big\{ \sum_{\substack{  (\lambda_1,\ldots,\lambda_n) \in \mathbb{Z}_{\ge 0}^n \\ \lambda_1+\cdots+\lambda_n = t}} c_{\lambda_1,\ldots,\lambda_n} x_1^{\lambda_1} \cdots x_n^{\lambda_n} \mid c_{\lambda_1,\ldots,\lambda_n} \in \mathbb{R} \Big\}, \\ 
   \Harm_t(\mathbb{R}^n)     & = \Big\{ f \in \Hom_t (\mathbb{R}^n) \mid \sum_{i=1}^n\frac{\partial^2}{\partial x_i^2} f = 0 \Big\}.
\end{split}
\end{equation*}
It is then obvious that 
It is not entirely obvious but shown
$\mathcal{P}_t(\mathbb{R}^n) = \oplus_{\ell=0}^t \Hom_\ell(\mathbb{R}^n)$.
We use the notation $\mathcal{P}(A)$ for the restriction of $\mathcal{P}(\mathbb{R}^n)$ to $A \subseteq \mathbb{R}^n$, and similarly for $\mathcal{P}(A)$, $\Hom_t(A)$ and $\Harm_t(A)$.

The following proposition is classical in 
spherical harmonics
(see, e.g.,~\cite{EMOT1953,M1966}), which is a basic tool in the study of spherical designs (see, e.g.,~\cite{BB2009,DGS1977}), and is related to the dimension formula for a special subspace of $\mathcal{P}(\mathbb{S}^{n-1})$ (see Proposition~\ref{prop:Molien}).

\begin{proposition}[cf.~\cite{DGS1977,M1966}]\label{prop:decomposition1}
It holds that
\begin{equation}
\label{eq:harm_decomp}
\mathcal{P}(\mathbb{S}^{n-1}) = \bigoplus_{i \ge 0} \Harm_i(\mathbb{S}^{n-1}).
\end{equation}
Moreover (\ref{eq:harm_decomp}) is the orthogonal direct sum with respect to the inner product $(\phi,\psi) = \frac{1}{|\mathbb{S}^{n-1}|} \int_{\mathbb{S}^{n-1}} \phi \psi d\rho$.
\end{proposition}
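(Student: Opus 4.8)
The plan is to reduce the assertion on the sphere to the classical Fischer (canonical) decomposition of homogeneous polynomials on $\mathbb{R}^n$, and then to obtain orthogonality from the eigenvalue structure of the Laplace--Beltrami operator. First I would establish, for each $t \ge 2$, the Fischer decomposition
\[
\Hom_t(\mathbb{R}^n) = \Harm_t(\mathbb{R}^n) \oplus r^2\,\Hom_{t-2}(\mathbb{R}^n), \qquad r^2 = x_1^2 + \cdots + x_n^2.
\]
The cleanest route is to equip $\Hom_t(\mathbb{R}^n)$ with the apolar (Fischer) inner product $\langle p,q\rangle = \bigl(p(\partial)\,q\bigr)$, where $p(\partial)$ denotes the constant-coefficient differential operator obtained from $p$ by replacing each $x_i$ with $\partial/\partial x_i$. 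A short computation shows that multiplication by $r^2$ and the Laplacian $\Delta = \sum_{i=1}^n \partial^2/\partial x_i^2$ are mutually adjoint for this inner product. Since $\Harm_t(\mathbb{R}^n) = \ker\bigl(\Delta|_{\Hom_t(\mathbb{R}^n)}\bigr)$, its orthogonal complement inside $\Hom_t(\mathbb{R}^n)$ is exactly the image of the adjoint of $\Delta$, namely $r^2\,\Hom_{t-2}(\mathbb{R}^n)$, which gives the displayed splitting. Iterating yields
\[
\Hom_t(\mathbb{R}^n) = \bigoplus_{j=0}^{\lfloor t/2 \rfloor} r^{2j}\,\Harm_{t-2j}(\mathbb{R}^n).
\]

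Next I would restrict to $\mathbb{S}^{n-1}$. Because $r^2 \equiv 1$ on the sphere, each factor $r^{2j}$ restricts to the constant $1$, so the identity above collapses to $\Hom_t(\mathbb{S}^{n-1}) = \sum_{j=0}^{\lfloor t/2\rfloor} \Harm_{t-2j}(\mathbb{S}^{n-1})$. Combined with the decomposition $\mathcal{P}_t(\mathbb{R}^n) = \bigoplus_{\ell=0}^t \Hom_\ell(\mathbb{R}^n)$ recorded above, this shows that $\mathcal{P}(\mathbb{S}^{n-1})$ is spanned by the restrictions of $h$-harmonics of all degrees, establishing the sum $\mathcal{P}(\mathbb{S}^{n-1}) = \sum_{i \ge 0} \Harm_i(\mathbb{S}^{n-1})$.

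To upgrade this sum to an orthogonal direct sum, I would prove that $\Harm_i(\mathbb{S}^{n-1}) \perp \Harm_j(\mathbb{S}^{n-1})$ whenever $i \ne j$. The standard device is to observe that the restriction to $\mathbb{S}^{n-1}$ of a homogeneous harmonic polynomial of degree $i$ is an eigenfunction of the Laplace--Beltrami operator $\Delta_{\mathbb{S}^{n-1}}$ with eigenvalue $-i(i+n-2)$; this follows by writing the Euclidean Laplacian in polar coordinates and invoking homogeneity together with harmonicity. Since $\Delta_{\mathbb{S}^{n-1}}$ is self-adjoint for the inner product $(\phi,\psi)$ and the eigenvalues $-i(i+n-2)$ are pairwise distinct over nonnegative integers $i$, eigenfunctions attached to different degrees are automatically orthogonal; orthogonality then forces the sum to be direct, completing the argument.

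The main obstacle is the Fischer decomposition itself, and specifically the surjectivity of $\Delta \colon \Hom_t(\mathbb{R}^n) \to \Hom_{t-2}(\mathbb{R}^n)$ (equivalently, the injectivity of multiplication by $r^2$), which is what pins the orthogonal complement of $\Harm_t(\mathbb{R}^n)$ down to $r^2\,\Hom_{t-2}(\mathbb{R}^n)$ rather than a proper subspace of it. The adjointness computation for the apolar inner product resolves this cleanly, but it is the one point where a genuine, if brief, verification is needed; everything after that is formal.
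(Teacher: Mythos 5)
Your proof is correct, and since the paper offers no proof of Proposition~\ref{prop:decomposition1} at all---it is cited as classical (cf.~\cite{DGS1977,M1966})---the right comparison is with those references, whose standard argument is essentially what you give: the Fischer decomposition $\Hom_t(\mathbb{R}^n) = \Harm_t(\mathbb{R}^n) \oplus r^2\Hom_{t-2}(\mathbb{R}^n)$ via adjointness of $r^2\cdot$ and $\Delta$, restriction to the sphere where $r^2 \equiv 1$, and orthogonality from the distinct Laplace--Beltrami eigenvalues $-i(i+n-2)$. Nothing further is needed; your flagged step (surjectivity of $\Delta$ on homogeneous polynomials, i.e.\ that the complement of $\Harm_t$ is exactly $r^2\Hom_{t-2}$) is indeed the crux, and the apolar inner product handles it correctly.
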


\subsection{Spherical design, Hilbert identity, simplicial design}\label{subsect:design}

Delsarte et al.~\cite{DGS1977} introduces the notion of spherical design. 

\begin{definition}[Spherical cubature and design]\label{def:spherical_design}
Let $X$ be a finite subset of $\mathbb{S}^{n-1}$, with a weight function $w : X \rightarrow \mathbb{R}_{>0}$. An integration formula of type
\begin{equation}\label{eq:spherical_design1}
\frac{1}{|\mathbb{S}^{n-1}|}\int_{\mathbb{S}^{n-1}} f(y) d\rho
= \sum_{x \in X} w(x)f(x) \ \text{ for every $f \in \mathcal{P}_t(\mathbb{S}^{n-1})$}
\end{equation}
is called a {\it cubature of degree $t$ on $\mathbb{S}^{n-1}$}. 
The pair $(X,w)$ is called 
an 
{\it (equi-weighted) spherical $t$-design} if $w(x)$
takes the constant $1/|X|$, and a {\it weighted spherical $t$-design} otherwise.
\end{definition}

We shall look at some examples.

\begin{example}[cf.~\cite{BB2009,DGS1977}]\label{ex:spherical_dim2}
For any positive integer $k$, let $X$ be the vertex set of a regular $(4k)$-gon inscribed in $\mathbb{S}^1$. An equi-weighted formula of type
\begin{equation*}
\frac{1}{|\mathbb{S}^1|} \int_{\mathbb{S}^1} f(y_1,y_2) d\rho
=
\frac{1}{4k} \sum_{i=0}^{4k-1} f\Big(\cos\frac{(2i+1)\pi}{4k},\sin\frac{(2i+1)\pi}{4k}\Big)
\end{equation*}
is a cubature of degree $4k-1$ on $\mathbb{S}^1$.
\end{example}

\begin{example}\label{ex:spherical_dim3}
An equi-weighted formula of type
\begin{equation}\label{eq:dim3_1}
\begin{gathered}
\frac{1}{|\mathbb{S}^2|} \int_{\mathbb{S}^2} f(y_1,y_2,y_3) d\rho 
=
\frac{1}{6} \sum_{x \in (1,0,0)^{B_3}} f(x)
\end{gathered}
\end{equation}
is a cubature of degree $3$ on $\mathbb{S}^2$.
Here $x^{B_3}$ denotes the orbit of $x \in \mathbb{R}^3$ under the action of the octahedral group $B_3$.
In the present example, we have
\begin{equation}\label{corner_dim3_1}
(1,0,0)^{B_3} = \{ (\pm 1,0,0),(0,\pm 1,0),(0,0,\pm 1)\}.
\end{equation}
This is the vertex set of a regular octahedron in $\mathbb{R}^3$, which is a class of the {\it corner vectors} of the group $B_3$
(see Section~\ref{subsect:corner} for the detail).
An equi-weighted formula of type
\begin{equation*}
\begin{gathered}
\frac{1}{|\mathbb{S}^2|} \int_{\mathbb{S}^2} f(y_1,y_2,y_3) d\rho 
=
\frac{1}{6} \sum_{x \in (\frac{1}{\sqrt{2}},\frac{1}{\sqrt{2}},0)^{B_3}} f(x)
\end{gathered}
\end{equation*}
is also a cubature of degree $3$ on $\mathbb{S}^2$.
In this example, we have
\begin{equation}\label{corner_dim3_2}
\Big(\frac{1}{\sqrt{2}},\frac{1}{\sqrt{2}},0\Big)^{B_3} = \Big\{ \Big(\pm \frac{1}{\sqrt{2}}, \pm\frac{1}{\sqrt{2}},0\Big), \Big(0, \pm \frac{1}{\sqrt{2}}, \pm\frac{1}{\sqrt{2}}\Big),
\Big(\pm \frac{1}{\sqrt{2}}, 0, \pm\frac{1}{\sqrt{2}}\Big) \Big\},
\end{equation}
which is just the set of the midpoints of the edges of a regular octahedron, again a class of the corner vectors of $B_3$.
\end{example}

\begin{example}[Schur's formula]\label{ex:spherical_dim4}
The following is a cubature of degree $11$ 
on $\mathbb{S}^3$:
\begin{equation}\label{eq:Schur}
\begin{gathered}
\frac{1}{|\mathbb{S}^3|} \int_{\mathbb{S}^3} f(y_1,y_2,y_3,y_4) d\rho 
=
\frac{9}{640} \sum_{x \in (\frac{2}{\sqrt{6}},\frac{1}{\sqrt{6}},\frac{1}{\sqrt{6}},0)^{B_4}} f(x) \\
+ \frac{1}{60} \sum_{x \in (1,0,0,0)^{B_4}} f(x) 
+ \frac{1}{96} \sum_{x \in (\frac{1}{\sqrt{2}},\frac{1}{\sqrt{2}},0,0)^{B_4}} f(x)
+ \frac{1}{60} \sum_{x \in (\frac{1}{2},\frac{1}{2},\frac{1}{2},\frac{1}{2})^{B_4}} f(x).
\end{gathered}
\end{equation}
This point configuration is not equi-weighted.
\end{example}

Formula (\ref{eq:Schur}) originally stems from Schur (cf.~\cite[p.721]{D1923}), who finds a certain polynomial identity called {\it Hilbert identity} and thereby makes a significant contribution to Waring problem for $10$th powers of integers.

\begin{definition}\label{def:Hilbert}
A polynomial identity of type
\begin{equation*}
(X_1^2+\cdots+X_n^2)^t
= \sum_{i=1}^N c_i (a_{i1}X_1 + \cdots + a_{in}X_n)^{2t}
\end{equation*}
where $c_i \in \mathbb{R}_{>0}$ and $a_{ij} \in \mathbb{R}$, is called a {\it Hilbert identity}. In particular
this is called a {\it rational (Hilbert) identity} if $c_i \in \mathbb{Q}_{>0}$ and $a_{ij} \in \mathbb{Q}$.
\end{definition}

The following result by Lyubich and Vaserstein~\cite{LV1993} directly relates spherical cubature to Hilbert identity. 
We also refer the reader to \cite{Seidel1994} for a brief explanation of the Lyubich-Vaserstein theorem.

\begin{theorem}[Lyubich-Vaserstein theorem]\label{thm:equivalence_sphere_identity}
Let $n,t$ be positive integers and
\begin{equation*}
c_{n,t}
=  \frac{1}{|\mathbb{S}^{n-1}|}\int_{\mathbb{S}^{n-1}} y_1^{2t} d\rho. 
\end{equation*}
Let $x_i = (x_{i,1},\ldots,x_{i,n}) \in \mathbb{S}^{n-1}$ and $c_i \in \mathbb{R}_{>0}$ for $i=1,\ldots,N$.
The following are equivalent:
\begin{enumerate}
\item[\rm (i)] 
The points $x_i$ and weights $c_i$ give a (weighted) spherical 
design
of index $2t$ on $\mathbb{S}^{n-1}$, namely 
\begin{equation*}
\frac{1}{|\mathbb{S}^{n-1}|}\int_{\mathbb{S}^{n-1}} f(y) d\rho
= \sum_{i=1}^N c_i f(x_i) \ \text{ for every $f \in \Hom_{2t}(\mathbb{S}^{n-1})$};
\end{equation*}
\item[\rm (ii)]
\begin{equation*}
c_{n,t} (X_1^2+\cdots+X_n^2)^t
= \sum_{i=1}^N c_i (x_{i,1}X_1 + \cdots + x_{i,n}X_n)^{2t}.
\end{equation*}
\end{enumerate} 
\end{theorem}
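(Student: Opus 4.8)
The plan is to reduce both implications to a single well-chosen family of test functions and then exploit the rotational invariance of $\rho$ together with a spanning argument. For a fixed parameter vector $X = (X_1,\ldots,X_n) \in \mathbb{R}^n$, introduce the homogeneous polynomial $f_X(y) = (X_1 y_1 + \cdots + X_n y_n)^{2t} = \langle X, y\rangle^{2t} \in \Hom_{2t}(\mathbb{R}^n)$ in the variable $y$. The point is that evaluating the weighted sum on $f_X$ reproduces verbatim the right-hand side of the identity in (ii), since $f_X(x_i) = (x_{i,1}X_1 + \cdots + x_{i,n}X_n)^{2t}$; so the whole statement will follow once I relate the integral of $f_X$ to the left-hand side of (ii) and control the linear span of the $f_X$.

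First I would compute the integral. By the rotational invariance of the surface measure, $\int_{\mathbb{S}^{n-1}} \langle X, y\rangle^{2t}\, d\rho$ depends only on $|X|$; pulling out the factor $|X|^{2t}$ by homogeneity and aligning $X$ with $e_1$ reduces it to $\int_{\mathbb{S}^{n-1}} y_1^{2t}\, d\rho$. This gives $\frac{1}{|\mathbb{S}^{n-1}|}\int_{\mathbb{S}^{n-1}} f_X(y)\, d\rho = c_{n,t}\,|X|^{2t} = c_{n,t}(X_1^2 + \cdots + X_n^2)^t$, which is exactly the left-hand side of (ii). With this in hand, the direction (i)$\Rightarrow$(ii) is immediate: applying the design property to $f_X$ for each fixed $X$ equates the two polynomials in $X$ just computed, and since this holds for every $X \in \mathbb{R}^n$ it is the asserted Hilbert identity.

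For the converse (ii)$\Rightarrow$(i), I would use that the powers $f_X$, as $X$ ranges over $\mathbb{R}^n$, span all of $\Hom_{2t}(\mathbb{R}^n)$. Granting this, reading the identity (ii) at each fixed $X$ says precisely that the cubature holds for every test function $f_X$, and the linearity of both the integral and the weighted sum upgrades this to all of $\Hom_{2t}(\mathbb{R}^n)$, hence by restriction to $\Hom_{2t}(\mathbb{S}^{n-1})$, which is (i).

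The main obstacle, and really the only nontrivial ingredient, is the spanning claim. I would establish it by the apolarity pairing: if a constant-coefficient differential operator $D = \sum_{|\alpha|=2t} a_\alpha \partial^\alpha$ annihilates every $f_X$, then since $\partial_y^\alpha \langle X,y\rangle^{2t} = (2t)!\, X^\alpha$ for $|\alpha| = 2t$ one obtains $D f_X = (2t)! \sum_{|\alpha|=2t} a_\alpha X^\alpha \equiv 0$, forcing all $a_\alpha = 0$ and hence $D = 0$. Thus no nonzero linear functional on $\Hom_{2t}(\mathbb{R}^n)$ kills every $f_X$, so the $f_X$ span the whole space. Equivalently, one can invoke the classical polarization identity to write each monomial of degree $2t$ explicitly as a linear combination of $2t$-th powers of linear forms.
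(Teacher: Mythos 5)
Your proof is correct. Note that the paper does not prove this theorem at all: it is quoted as a known result of Lyubich and Vaserstein, with the reader referred to the cited references for an explanation, so there is no internal proof to compare against. Your argument is the standard one for this equivalence: the identity $\frac{1}{|\mathbb{S}^{n-1}|}\int_{\mathbb{S}^{n-1}}\langle X,y\rangle^{2t}\,d\rho = c_{n,t}|X|^{2t}$ via rotational invariance gives (i)$\Rightarrow$(ii) immediately, and the only genuinely nontrivial point — that the powers $\langle X,\cdot\rangle^{2t}$ span $\Hom_{2t}(\mathbb{R}^n)$ — you handle correctly by the apolarity pairing, using $\partial_y^\alpha\langle X,y\rangle^{2t} = (2t)!\,X^\alpha$ for $|\alpha|=2t$ together with nondegeneracy of the pairing $(D,f)\mapsto Df$ (or, equivalently, the classical polarization identity). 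The passage from $\Hom_{2t}(\mathbb{R}^n)$ to $\Hom_{2t}(\mathbb{S}^{n-1})$ is also handled properly, since the paper defines the latter as restrictions to the sphere and the cubature equation depends only on values there.
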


\begin{example}[Schur's identity]\label{ex:spherical_dim4_2}
By Theorem~\ref{thm:equivalence_sphere_identity}, the spherical cubature (\ref{eq:Schur}) is equivalent to the rational Hilbert identity
\begin{equation}\label{eq:Schur1}
\begin{gathered}
 22680 (X_1^2+X_2^2+X_3^2+X_4^2)^5 = \sum_{48} (2X_i \pm X_j \pm X_k)^{10} \\
+ 9 \sum_{4} (2X_i)^{10}
+ 180 \sum_{12} (X_i \pm X_j)^{10}
+ 9 \sum_{8} (X_1 \pm X_2 \pm X_3 \pm X_4)^{10},
\end{gathered}
\end{equation}
where each summation is taken among all combinations of sign changes and permutations of $X_1,X_2,X_3,X_4$.
\end{example}

A spherical cubature is closely connected to a simplicial cubature. We consider the standard orthogonal simplex in $\mathbb{R}^n$, namely
\begin{equation*}
T^n = \{ y=(y_1,\ldots,y_n) \in \mathbb{R}^n \mid 
y_1 \ge 0, 
\ldots, y_n \ge 0,\; | y |_1 \le 1 \},
\end{equation*}
where $|y|_1 = y_1 + \cdots + y_n$.
For $y \in T^n$, we define
\begin{equation*}
W(y_1,\ldots,y_n) = \frac{1}{\sqrt{(1- |y|_1) \Big(\prod_{i=1}^n y_i\Big)}}.
\end{equation*}

The following result can be found in Xu~\cite{X1998}.
\begin{theorem}[\cite{X1998}]\label{thm:simplex}
If
\begin{equation*}
\begin{gathered}
\frac{1}{\int_{T^{n-1}} W(y_1,\ldots,y_{n-1}) \; dy_1\cdots dy_{n-1}} \int_{T^{n-1}} f(y_1,\ldots,y_{n-1}) W(y_1,\ldots,y_{n-1}) \; dy_1\cdots dy_{n-1} \\
= \sum_{i=1}^N c_i f(x_{i,1},\ldots,x_{i,n-1}) \ \text{ for every $f \in \mathcal{P}_t(T^{n-1})$},
\end{gathered}
\end{equation*}
then
\begin{equation*}
\begin{gathered}
\int_{\mathbb{S}^{n-1}} f(y_1,\ldots,y_{n-1},y_{n}) \; d\rho \\
= \sum_{i=1}^N \frac{c_i}{2^{{\rm wt}(x_i)}} \sum_{\pm} f(\pm \sqrt{x_{i,1}}, \ldots, \pm \sqrt{x_{i,n-1}}, \pm \sqrt{1-|x_i|_1}) \ \text{ for every $f \in \mathcal{P}_{2t+1}(\mathbb{S}^{n-1})$}
\end{gathered}
\end{equation*}
where 
 ${\rm wt}(x_i) = |\{ j \mid x_{i, j} \ne 0\}|$ for $i=1,\ldots,N$.
Moreover, the converse direction also holds.
\end{theorem}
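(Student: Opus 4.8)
The plan is to verify the asserted spherical cubature on a spanning set of $\mathcal{P}_{2t+1}(\mathbb{S}^{n-1})$ and to transport each test function to the simplex by the squaring map $y \mapsto (y_1^2,\dots,y_{n-1}^2)$. Since both sides of the claimed identity are linear in $f$, it suffices to take $f$ to be a monomial $y^\lambda = y_1^{\lambda_1}\cdots y_n^{\lambda_n}$ with $|\lambda|_1 \le 2t+1$.

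First I would dispose of the monomials having at least one odd exponent. On the left, the explicit moment formula of Section~\ref{subsect:polynomial} (equivalently, the invariance of $\rho$ under each sign change $y_j \mapsto -y_j$) gives $\int_{\mathbb{S}^{n-1}} y^\lambda\, d\rho = 0$. On the right, the inner sign sum $\sum_{\pm}$ annihilates such a monomial, because $\sum_{\pm}(\pm\sqrt{x_{i,j}})^{\lambda_j}=0$ whenever $\lambda_j$ is odd. Hence it remains to treat even monomials $\lambda = 2\mu$. Here the degree bound is decisive: $|\lambda|_1\le 2t+1$ together with $|\lambda|_1$ even forces $|\lambda|_1\le 2t$, i.e. $|\mu|_1\le t$. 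This is exactly what will let me feed the image polynomial into a \emph{degree-$t$} simplicial cubature, and is the source of the degree doubling $t \mapsto 2t+1$.

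The heart of the argument is a change of variables. Writing $\mathbb{S}^{n-1}$ as the essentially disjoint union of its $2^n$ orthants and parametrising the positive orthant by $x_j = y_j^2$ $(1\le j\le n-1)$ with $y_n=\sqrt{1-|x|_1}$, a Jacobian computation gives $d\rho = 2^{-(n-1)} W(x)\,dx$ on each orthant, where $W$ is the simplex weight defined just before the statement. For an even monomial the substitution turns $y^{2\mu}$ into $x^\mu := x_1^{\mu_1}\cdots x_{n-1}^{\mu_{n-1}}(1-|x|_1)^{\mu_n}$, a polynomial on $T^{n-1}$ of degree $|\mu|_1 \le t$. Summing the $2^n$ orthant contributions and applying the assumed degree-$t$ cubature to $x^\mu$ reproduces the right-hand side; as an internal check, the resulting moment identity $\frac{1}{|\mathbb{S}^{n-1}|}\int_{\mathbb{S}^{n-1}} y^{2\mu}\,d\rho = \frac{1}{\int_{T^{n-1}} W}\int_{T^{n-1}} x^\mu W\,dx$ can be confirmed directly from the Dirichlet integral $\int_{T^{n-1}} \prod_j x_j^{\mu_j-1/2}(1-|x|_1)^{\mu_n-1/2}\,dx = \frac{\prod_j\Gamma(\mu_j+\tfrac12)}{\Gamma(|\mu|_1+\tfrac n2)}$ and the gamma-duplication identity, matched against the explicit sphere moments.

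I expect the main obstacle to be bookkeeping rather than conceptual, namely tracking the powers of $2$. One must reconcile the $2^{-(n-1)}$ from the Jacobian, the multiplicity with which the $2^n$ orthants collapse onto a single image point when a coordinate vanishes, and the normalising constants $|\mathbb{S}^{n-1}|$ and $\int_{T^{n-1}}W$. This is precisely where the factor $2^{-\mathrm{wt}(x_i)}$ enters: it turns the inner sum into the average of $f$ over the sign orbit of the lifted node, and for an even monomial that average is the common value $x^\mu$, so $\frac{c_i}{2^{\mathrm{wt}(x_i)}}\sum_\pm f$ collapses to $c_i\,x_i^\mu$ and matches the simplicial node exactly. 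Finally, because every step — the orthant decomposition, the squaring substitution, and the correspondence $\mu \leftrightarrow 2\mu$ between $\mathcal{P}_t(T^{n-1})$ and the even part of $\mathcal{P}_{2t+1}(\mathbb{S}^{n-1})$ — is reversible, the same computation read backwards yields the converse direction.
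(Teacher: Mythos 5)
Your proof is correct, and since the paper itself does not prove this result (it is quoted from Xu~\cite{X1998} without proof), there is no internal argument to compare against; what you give is essentially the standard proof of Xu's correspondence, namely reduction to monomials, annihilation of odd monomials by sign symmetry on both sides, and transport of the even monomials $y^{2\mu}$ (whose total degree is forced down to $2t$, which is exactly where the doubling $t\mapsto 2t+1$ comes from) through the orthant-by-orthant substitution $x_j=y_j^2$, under which $d\rho$ pushes forward to $2^{-(n-1)}W(x)\,dx$. One point is worth making explicit: your decision to read $\frac{c_i}{2^{{\rm wt}(x_i)}}\sum_{\pm}f$ as $c_i$ times the \emph{average of $f$ over the sign orbit} of the lifted node is not just bookkeeping, it silently repairs the statement as transcribed in the paper, which is not self-consistent when taken literally. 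Indeed the left-hand side should carry the factor $1/|\mathbb{S}^{n-1}|$, and whenever $1-|x_i|_1\neq 0$ the sign orbit of the lifted node has $2^{{\rm wt}(x_i)+1}$ distinct points, not $2^{{\rm wt}(x_i)}$; this can be seen already in Example~\ref{ex:sphere_simplex}, where the simplicial node $(0,0)$ has ${\rm wt}=0$ but lifts to the two antipodal points $(0,0,\pm 1)$, each of which must receive weight $c_i/2$ in order to recover \eqref{eq:dim3_1}. With that convention fixed, your verification on even monomials (using $|\mathbb{S}^{n-1}|=2\int_{T^{n-1}}W\,dx$, confirmable by the Dirichlet integral), and your remark that the whole computation reverses to give the converse, are both sound.
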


\begin{example}[Example~\ref{ex:spherical_dim3}, revisited]\label{ex:sphere_simplex}
By Theorem~\ref{thm:simplex},
the formula (\ref{eq:dim3_1}) of degree $3$ on $\mathbb{S}^2$ is equivalent to a simplicial cubature of type
\begin{equation*}
\begin{gathered}
\frac{1}{\int_{T^2} W(y_1,y_2) dy_1 dy_2} \int_{T^2} f(y_1, y_2) W(y_1, y_2) dy_1  dy_2
= \frac{1}{3} \left ( f(1,0) + f(0,1) + f(0,0) \right ) \\ \ \textit{ for every $f \in \mathcal{P}_1(T^2)$}.
\end{gathered}
\end{equation*}
We note that the 
$6$ vertices
of a regular octahedron~(see (\ref{corner_dim3_1})) 
is transformed to the vertex set of the standard orthogonal simplex $T^2$.
Meanwhile, the formula of type
\begin{equation*}
\begin{gathered}
\frac{1}{|\mathbb{S}^2|} \int_{\mathbb{S}^2} f(y_1,y_2,y_3) d\rho 
=
\frac{1}{6} \sum_{x \in (\frac{1}{\sqrt{2}},\frac{1}{\sqrt{2}},0)^{B_3}} f(x)
\end{gathered}
\end{equation*}
is also a cubature of degree $3$ on $\mathbb{S}^2$, which is equivalent to a simplicial cubature of type
\begin{equation*}
\begin{gathered}
\frac{1}{\int_{T^2} W(y_1,y_2) dy_1 dy_2} \int_{T^2} f(y_1, y_2) W(y_1, y_2) dy_1  dy_2
= \frac{1}{3} \left( f \big (\frac{1}{2},0 \big ) + f \big (0,\frac{1}{2} \big ) + f\big (\frac{1}{2},\frac{1}{2} \big) \right )  \\ \ \textit{ for every $f \in \mathcal{P}_1(T^2)$}.
\end{gathered}
\end{equation*}
Again, we note that the midpoints of the edges of a regular octahedron~(see (\ref{corner_dim3_2})) is transformed to those of the standard orthogonal simplex $T^2$.
Theorem~\ref{thm:simplex} preserves geometric information about corner vectors; more details will be available in Remark~\ref{rem:GCV1}.
\end{example}

\subsection{The corner-vector method}\label{subsect:corner}

A most classical method of constructing spherical designs is the {\it corner-vector method} for the symmetry group $B_n$ of a regular hyperoctahedron in $\mathbb{R}^n$.
Hereafter we write $x^{B_n}$ for the orbit of $x \in \mathbb{R}^n$ under the action of the hyperoctahedral group $B_n$.

\begin{definition}\label{def:corner1}
Let $e_1,\ldots,e_n$ be the standard basis vectors in $\mathbb{R}^n$.
For $k=1,\ldots,n$, let $v_k$ be a vector of type
\begin{equation*}
v_k = \frac{1}{\sqrt{k}} \sum_{i=1}^k e_i \in \mathbb{S}^{n-1}.
\end{equation*}
The {\it corner vectors (for $B_n$)} are the elements of $v_1^{B_n},\ldots,v_n^{B_n}$. 
\end{definition}

\begin{example}[Example~\ref{ex:spherical_dim3}, revisited]\label{ex:corner1}
For $n=2$, the corner vectors are given by
\begin{equation*}
v_1^{B_2} = \{(\pm 1, 0),(0,\pm 1)\}, \quad 
v_2^{B_2} = \Big\{ \Big( \pm \frac{1}{\sqrt{2}}, \pm \frac{1}{\sqrt{2}} \Big)
\Big\},
\end{equation*}
which are the vertices and midpoints of edges of a square in $\mathbb{R}^2$, respectively.
For $n=3$, the corner vectors are given by
\begin{equation*}
\begin{gathered}
v_1^{B_3} = \{ (\pm1,0,0),(0,\pm1,0),(0,0,\pm1) \}, \\
v_2^{B_3} = \Big(\frac{1}{\sqrt{2}},\frac{1}{\sqrt{2}},0\Big)^{B_3} = \Big\{ \Big(\pm \frac{1}{\sqrt{2}}, \pm\frac{1}{\sqrt{2}},0\Big), \Big(0, \pm \frac{1}{\sqrt{2}}, \pm\frac{1}{\sqrt{2}}\Big),
\Big(\pm \frac{1}{\sqrt{2}}, 0, \pm\frac{1}{\sqrt{2}}\Big) \Big\}, \\
v_3^{B_3} = \Big\{ \Big( \pm \frac{1}{\sqrt{3}}, \pm \frac{1}{\sqrt{3}}, \pm \frac{1}{\sqrt{3}} \Big) \Big\}.
\end{gathered}
\end{equation*}
The first and second sets are the vertices and midpoints of edges of a regular octahedron in $\mathbb{R}^3$ 
respectively, and the third is the set of barycenters of faces.
\end{example}

The utility of the corner vector construction is verified from the following more sophisticated result. Below we denote by $\mathcal{P}_t(A)^{B_n}$ the space consisting of all $B_n$-invariant polynomials in $\mathcal{P}_t(A)$, and similarly for ${\rm Harm}_i(A)^{B_n}$.

\begin{theorem}[Sobolev's theorem]\label{thm:Sobolev}
Let $x_1^{B_n},\ldots,x_m^{B_n} \subset \mathbb{S}^{n-1}$.
Let $w: \bigcup_{i=1}^m x_i^{B_n} \rightarrow \mathbb{R}_{>0}$ be a weight function such that 
$w(x) = w(x')$ for every $x,x' \in x_i^{B_n}$ and $i=1,\ldots,m$.
Then the following are equivalent:
\begin{enumerate}
\item[\rm (i)] $(X,w)$ is a (weighted) spherical $t$-design on $\mathbb{S}^{n-1}$;
\item[\rm (ii)] 
\begin{equation*}
\frac{1}{|\mathbb{S}^{n-1}|} \int_{\mathbb{S}^{n-1}} f(y) d\rho 
=
\sum_{x \in X} w(x) f(x) \ \text{ for every $f \in \mathcal{P}_t(\mathbb{S}^{n-1})^{B_n}$}.
\end{equation*}
\end{enumerate}
\end{theorem}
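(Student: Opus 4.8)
The plan is to prove the nontrivial implication (ii) $\Rightarrow$ (i) by a group-averaging (Reynolds operator) argument; the reverse implication (i) $\Rightarrow$ (ii) is immediate, since $\mathcal{P}_t(\mathbb{S}^{n-1})^{B_n}$ is a linear subspace of $\mathcal{P}_t(\mathbb{S}^{n-1})$ and (i) already asserts the cubature equality for \emph{every} polynomial of degree at most $t$.

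For (ii) $\Rightarrow$ (i) I would start from the observation that $B_n$ acts on $\mathbb{R}^n$ as a finite group of orthogonal transformations (signed permutation matrices). Consequently this action leaves $\mathbb{S}^{n-1}$ and its surface measure $\rho$ invariant, leaves each orbit $x_i^{B_n}$ setwise invariant (hence leaves $X = \bigcup_{i=1}^m x_i^{B_n}$ invariant), and, by the hypothesis that $w$ is constant on each orbit, satisfies $w(gx) = w(x)$ for all $g \in B_n$ and $x \in X$. Given an arbitrary $f \in \mathcal{P}_t(\mathbb{S}^{n-1})$, I would form its average
\[
\bar f(y) = \frac{1}{|B_n|} \sum_{g \in B_n} f(gy),
\]
which is well defined on $\mathbb{S}^{n-1}$ because $B_n$ preserves the sphere. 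Since each $g$ is linear, $\bar f$ again has degree at most $t$, and a reindexing $g \mapsto gh$ shows $\bar f(hy) = \bar f(y)$ for all $h \in B_n$; thus $\bar f \in \mathcal{P}_t(\mathbb{S}^{n-1})^{B_n}$, to which the hypothesis (ii) applies.

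The heart of the argument will be to check that averaging changes neither side of the desired identity. On the integral side, the orthogonal invariance of $\rho$ gives, after the change of variables $y \mapsto gy$, that $\int_{\mathbb{S}^{n-1}} f(gy)\,d\rho = \int_{\mathbb{S}^{n-1}} f\,d\rho$ for each $g$, whence $\frac{1}{|\mathbb{S}^{n-1}|}\int \bar f\,d\rho = \frac{1}{|\mathbb{S}^{n-1}|}\int f\,d\rho$. On the cubature side, for fixed $g$ the map $x \mapsto gx$ is a bijection of $X$ onto itself, and the orbit-constancy $w(g^{-1}y) = w(y)$ lets me reindex $\sum_{x\in X} w(x) f(gx) = \sum_{y\in X} w(y) f(y)$; averaging over $g$ then yields $\sum_{x\in X} w(x)\bar f(x) = \sum_{x\in X} w(x) f(x)$. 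Applying (ii) to $\bar f$ and chaining these three equalities delivers the cubature identity for the arbitrary $f$, which is exactly (i).

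I do not anticipate a serious obstacle here, as this is a clean application of the Reynolds operator; the only points demanding care are the bookkeeping ones. I would make sure the reindexing of the weighted sum uses the orbit-constancy of $w$ precisely, since this is the single place where the hypothesis on $w$ is indispensable, and that the change of variables in the integral invokes the \emph{orthogonality} of $B_n$ rather than merely its finiteness. Structurally, the cleanest phrasing is that $\bar f$ is the image of $f$ under the averaging projection onto $\mathcal{P}_t(\mathbb{S}^{n-1})^{B_n}$, and that both functionals $f \mapsto \frac{1}{|\mathbb{S}^{n-1}|}\int f\,d\rho$ and $f \mapsto \sum_{x\in X} w(x) f(x)$ factor through this projection; hypothesis (ii) says they agree on its image, so they agree on all of $\mathcal{P}_t(\mathbb{S}^{n-1})$.
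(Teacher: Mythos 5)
Your proof is correct: the paper states Theorem~\ref{thm:Sobolev} as a classical result (Sobolev's theorem) and gives no proof of its own, so there is nothing to compare against line by line. Your group-averaging argument --- reducing (ii) $\Rightarrow$ (i) to the facts that $f \mapsto \bar f$ maps $\mathcal{P}_t(\mathbb{S}^{n-1})$ into $\mathcal{P}_t(\mathbb{S}^{n-1})^{B_n}$ and that both the normalized integral and the weighted sum over $X$ are unchanged by this averaging (using orthogonal invariance of $\rho$, the $B_n$-invariance of $X$ as a union of orbits, and the orbit-constancy of $w$) --- is precisely the standard proof of this theorem, and all the bookkeeping steps you flag are handled correctly.
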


A great advantage of Theorem~\ref{thm:Sobolev} is the reduction of the computational cost in order to check the defining property (\ref{eq:spherical_design1}) of weighted spherical designs.
With Proposition~\ref{prop:decomposition1} in mind, we have to compute the dimension of $\Harm_i(\mathbb{S}^{n-1})^{B_n}$.

\begin{proposition}[cf. Appendix of~\cite{SHI2021}]
\label{prop:Molien}
The harmonic Molien series $\sum_{i=0}^\infty p_i \lambda^i$ where $p_{i} = \dim \Harm_{i}(\mathbb{S}^{n-1})^{B_n}$, is given by
\begin{equation*}
\sum_{i=0}^{\infty} p_i \lambda^{i} =
	 \frac{1}{(1-\lambda^4)(1-\lambda^6) \cdots (1-\lambda^{2n})}.
\end{equation*}
In particular,
\begin{align*}
	\sum_{i=0}^{\infty} p_{i} \lambda^{i} =
    \frac{1}{(1-\lambda^4)(1-\lambda^6) \cdots (1-\lambda^{2n})} = 
	\left\{
	\begin{array}{ll}
		1+\lambda^4+\lambda^6+\lambda^8+\lambda^{10}+ \cdots & \text{if $n=3$}; \\
		1+\lambda^4+\lambda^6+2\lambda^8+\lambda^{10}+ \cdots & \text{if $n=4$}; \\
		1+\lambda^4+\lambda^6+2\lambda^8+2\lambda^{10}+ \cdots & \text{if $n\ge 5$}.
	\end{array} \right.
\end{align*}
\end{proposition}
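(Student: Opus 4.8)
The plan is to obtain the harmonic Molien series by first computing the Poincaré series of the \emph{full} graded invariant ring $\bigoplus_{d\ge 0}\Hom_d(\mathbb{R}^n)^{B_n}$ and then stripping off the contribution of the radial variable. First I would recall that the Coxeter group $B_n$, realized as the group of signed permutation matrices, is a finite real reflection group; by the Chevalley--Shephard--Todd theorem its invariant ring $\mathcal{P}(\mathbb{R}^n)^{B_n}$ is therefore a polynomial algebra. For type $B_n$ the fundamental invariants may be taken to be the elementary symmetric polynomials $e_1,\ldots,e_n$ evaluated at $(x_1^2,\ldots,x_n^2)$, which are algebraically independent homogeneous generators of degrees $2,4,6,\ldots,2n$. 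Consequently the Poincaré series of the invariant ring is
\begin{equation*}
\sum_{d\ge 0}\dim\Hom_d(\mathbb{R}^n)^{B_n}\,\lambda^d=\prod_{k=1}^n\frac{1}{1-\lambda^{2k}}.
\end{equation*}

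The key conceptual step is to make the classical harmonic (Fischer) decomposition $B_n$-equivariant. Writing $q=x_1^2+\cdots+x_n^2$, one has the direct sum $\Hom_d(\mathbb{R}^n)=\bigoplus_{j\ge 0}q^{\,j}\Harm_{d-2j}(\mathbb{R}^n)$. Since $q$ is $B_n$-invariant, multiplication by $q^{\,j}$ commutes with the $B_n$-action and carries invariants to invariants, so the decomposition restricts to
\begin{equation*}
\Hom_d(\mathbb{R}^n)^{B_n}=\bigoplus_{j\ge 0}q^{\,j}\,\Harm_{d-2j}(\mathbb{R}^n)^{B_n}.
\end{equation*}
Moreover, restriction to the sphere is a degree-preserving linear isomorphism $\Harm_i(\mathbb{R}^n)^{B_n}\xrightarrow{\sim}\Harm_i(\mathbb{S}^{n-1})^{B_n}$ (injectivity follows from homogeneity, surjectivity from the definition of $\Harm_i(\mathbb{S}^{n-1})$, and equivariance because $B_n$ preserves $\mathbb{S}^{n-1}$), so $p_i=\dim\Harm_i(\mathbb{R}^n)^{B_n}$ and I may work entirely with polynomials on $\mathbb{R}^n$.

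Finally I would translate the equivariant decomposition into generating functions. Taking dimensions gives $\dim\Hom_d(\mathbb{R}^n)^{B_n}=\sum_{j\ge 0}p_{d-2j}$, which is exactly the coefficientwise identity
\begin{equation*}
\sum_{d\ge 0}\dim\Hom_d(\mathbb{R}^n)^{B_n}\,\lambda^d=\frac{1}{1-\lambda^2}\sum_{i\ge 0}p_i\lambda^i.
\end{equation*}
Solving for $\sum_i p_i\lambda^i$ and substituting the product formula from the first step cancels the factor $(1-\lambda^2)^{-1}$ and yields $\sum_{i\ge 0}p_i\lambda^i=\prod_{k=2}^{n}(1-\lambda^{2k})^{-1}$, which is precisely the asserted series. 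The three displayed low-degree expansions for $n=3$, $n=4$ and $n\ge 5$ then follow by expanding this rational function as a formal power series up to order $\lambda^{10}$, a routine computation (the coefficient of $\lambda^{10}$ jumps from $1$ to $2$ exactly when the generator of degree $10$ first appears, i.e. for $n\ge 5$). The only points that genuinely require justification, rather than bookkeeping, are the polynomiality of the invariant ring with the stated generator degrees---the input from reflection-group theory---and the $B_n$-equivariance of the harmonic decomposition; I expect the latter to be the conceptual crux, since everything downstream is formal manipulation of Hilbert series.
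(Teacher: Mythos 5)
Your proposal is correct, and every step is sound: $B_n$ is a finite real reflection group whose invariant degrees are $2,4,\ldots,2n$ (Chevalley--Shephard--Todd), the Fischer decomposition $\Hom_d(\mathbb{R}^n)=\bigoplus_{j\ge 0}q^j\Harm_{d-2j}(\mathbb{R}^n)$ is $B_n$-equivariant since $q$ is invariant, restriction to the sphere identifies $\Harm_i(\mathbb{R}^n)^{B_n}$ with $\Harm_i(\mathbb{S}^{n-1})^{B_n}$, and the Hilbert-series bookkeeping cancels $(1-\lambda^2)^{-1}$ to leave exactly $\prod_{k=2}^{n}(1-\lambda^{2k})^{-1}$, whose expansion through $\lambda^{10}$ reproduces the three displayed cases. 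Note that the paper itself gives no proof of this proposition---it is quoted as a known fact with the citation ``cf.\ Appendix of \cite{SHI2021}''---so your argument is not an alternative to an in-paper proof but a self-contained justification of the cited result, by what is the standard reflection-group route (the reference could equally well proceed via Molien's averaging formula $\frac{1}{|B_n|}\sum_{g}\det(1-\lambda g)^{-1}$, but the outcome and the degree of rigor are the same).
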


\begin{corollary}\label{cor:dimension4-8}
It holds that
\begin{equation*}
\begin{gathered}
\dim \Harm_4(\mathbb{S}^{n-1})^{B_n} = \dim \Harm_6(\mathbb{S}^{n-1})^{B_n} = 1 \quad \text{for $n \ge 3$}, \\
\dim \Harm_8(\mathbb{S}^{n-1})^{B_n} =
\left\{\begin{array}{cl}
 1 & \text{for $n = 3$}, \\
 2 & \text{for $n \ge 4$},
\end{array} \right.\\
\dim \Harm_{10}(\mathbb{S}^{n-1})^{B_n} =
\left\{\begin{array}{cl}
 1 & \text{for $n = 3, 4$}, \\
 2 & \text{for $n \ge 5$},
\end{array} \right.\\
\dim \Harm_{2k+1}(\mathbb{S}^{n-1})^{B_n} = 0 \quad \text{for $k=0,1,\ldots$}
\end{gathered}
\end{equation*}
\end{corollary}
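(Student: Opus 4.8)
The plan is to extract the four low-degree coefficients, together with the vanishing of every odd-degree coefficient, directly from the closed-form harmonic Molien series provided by Proposition~\ref{prop:Molien}, using the combinatorial reading of its coefficients. Throughout I write $p_i = \dim \Harm_i(\mathbb{S}^{n-1})^{B_n}$ as in that proposition, so the task is purely to compute $p_4, p_6, p_8, p_{10}$ as functions of $n$ and to show $p_{2k+1}=0$ for all $k$.

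First I would dispose of the odd-degree claim by a single structural observation: the denominator $(1-\lambda^4)(1-\lambda^6)\cdots(1-\lambda^{2n})$ is a polynomial in $\lambda^2$, so its reciprocal is a formal power series in $\lambda^2$. Hence every coefficient of an odd power of $\lambda$ is zero, which is exactly $\dim \Harm_{2k+1}(\mathbb{S}^{n-1})^{B_n}=0$ for all $k \ge 0$, with no case analysis required.

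For the even coefficients I would pass to the combinatorial meaning of $p_i$. Expanding each factor $1/(1-\lambda^{2j})$ as a geometric series shows that $p_i$ equals the number of tuples $(c_2,\ldots,c_n)\in\mathbb{Z}_{\ge 0}^{n-1}$ with $\sum_{j=2}^{n} 2j\,c_j = i$; writing $i=2m$, this is the number of partitions of $m$ into parts drawn from $\{2,3,\ldots,n\}$, repetition allowed. It then remains to enumerate these partitions for $m=2,3,4,5$, corresponding to degrees $i=4,6,8,10$. For $m=2$ the only admissible partition is $2$, so $p_4=1$ for $n\ge 2$; for $m=3$ the only one is $3$, so $p_6=1$ for $n\ge 3$. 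For $m=4$ the admissible partitions are $2+2$ and $4$, the latter needing a part of size $4$ and hence $n\ge 4$, which gives $p_8=1$ for $n=3$ and $p_8=2$ for $n\ge 4$. For $m=5$ the admissible partitions are $2+3$ and $5$, the latter needing $n\ge 5$, which gives $p_{10}=1$ for $n=3,4$ and $p_{10}=2$ for $n\ge 5$. These values agree with the series expansions displayed in Proposition~\ref{prop:Molien} and establish all four dimension statements.

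There is no genuinely hard step here; the only point demanding care is getting the threshold values of $n$ correct in the partition count, namely recognising precisely when a part of size $4$ (for degree $8$) or size $5$ (for degree $10$) first becomes available. This threshold behaviour is exactly what produces the split between the $n=3$, $n=4$, and $n\ge 5$ regimes recorded in the statement.
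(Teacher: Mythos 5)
Your proof is correct and follows essentially the same route as the paper, which deduces the corollary by reading off coefficients of the harmonic Molien series in Proposition~\ref{prop:Molien}. Your partition-counting interpretation (coefficient of $\lambda^{2m}$ equals the number of partitions of $m$ into parts from $\{2,3,\ldots,n\}$) is simply a systematic justification of the case-by-case expansions that the paper displays directly, and it correctly captures the thresholds at $n=4$ and $n=5$ as well as the vanishing of all odd coefficients.
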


Let $\mathcal{S}_n$ be the symmetric group of order $n$. 
It is convenient to use the notation ${\rm sym}(f)$ for a symmetric polynomial as defined by
\begin{equation}\label{eq:sym1}
    {\rm sym} (f) = \frac{1}{|(\mathcal{S}_n)_f|}\sum_{\gamma\in \mathcal{S}_n}f(x_{\gamma(1)},\ldots,x_{\gamma(n)})
\end{equation}
where
\[
(\mathcal{S}_n)_f=\{\gamma\in \mathcal{S}_n \mid f(x_{\gamma(1)},\ldots,x_{\gamma(n)})=f(x_1,\ldots,x_n) \textit{ for all } (x_1,\ldots,x_n)\in\mathbb{R}^n\}.
\]

The following lemmas will be utilized for further arguments in the main body of this paper~(see, for example, Lemma~\ref{lem:Harm81}).

\begin{lemma}[\cite{SHI2021}]\label{lem:basisofHarm}
We define 
homogeneous polynomials 
$f_4$, $f_6$, $f_{8,1}$, $f_{8, 2}$, $f_{10,1}$, $f_{10, 2}$ as
\begin{align*}
      f_4(x) &= {\rm sym}(x_1^4) - \frac{6}{n-1} {\rm sym}(x_1^2 x_2^2) \quad \text{for $n \geq 3$},  \\
      f_6(x) &= {\rm sym}(x_1^6) - \frac{15}{n-1} {\rm sym}(x_1^2 x_2^4) + \frac{180}{(n-1)(n-2)}{\rm sym}(x_1^2x_2^2x_3^2) \quad \text{for $n \geq 3$}, \\
       f_{8,1}(x) &= {\rm sym} (x_1^8)-\frac{28}{n-1}{\rm sym}(x_1^2 x_2^6)+\frac{70}{n-1}{\rm sym}(x_1^4 x_2^4) \quad \text{for $n \geq 3$}, \\
      f_{8,2}(x) &= {\rm sym}(x_1^4 x_2^4)-\frac{6}{n-2}{\rm sym}(x_1^2 x_2^2 x_3^4)
      +\frac{108}{(n-2)(n-3)} {\rm sym}(x_1^2 x_2^2 x_3^2 x_4^2) \quad \text{for $n \geq 4$}, \\
	   f_{10,1}(x) &= 
	 	{\rm sym}(x_{1}^{10}) 
	  	- \frac{45}{n-1} {\rm sym}(x_{1}^{2} x_{2}^{8}) 
		+ \frac{42}{n-1} {\rm sym}(x_{1}^{4} x_{2}^{6}) 
	  	+ \frac{1008}{(n-1)(n-2)} {\rm sym}(x_{1}^{2} x_{2}^{2} x_{3}^{6}) \\ 
	  	 & \qquad - \frac{1260}{(n-1)(n-2)} {\rm sym}(x_{1}^{2} x_{2}^{4} x_{3}^{4}) \quad \text{for $n \geq 3$}, \\
	 f_{10,2}(x) &= 
	 	{\rm sym}(x_{1}^{4} x_{2}^{6}) 
	 	- \frac{6}{n-2} {\rm sym}(x_{1}^{2} x_{2}^{2} x_{3}^{6}) 
	 	- \frac{30}{n-2} {\rm sym}(x_{1}^{2} x_{2}^{4} x_{3}^{4}) 
	 	+ \frac{450}{(n-2)(n-3)} {\rm sym}(x_{1}^{2}x_{2}^{2}x_{3}^{2}x_{4}^{4}) \\
	 	 & \qquad - \frac{10800}{(n-2)(n-3)(n-4)} {\rm sym}(x_{1}^{2} x_{2}^{2} x_{3}^{2} x_{4}^{2} x_{5}^{2})  \quad \text{for $n \geq 5$}.  
\end{align*}
Then $f_4$, $f_6$, $f_{8,1}$, $f_{8, 2}$, $f_{10,1}$, $f_{10, 2}$ are $B_n$-invariant $h$-harmonics.
\end{lemma}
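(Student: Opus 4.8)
The plan is to prove the two asserted properties in turn: first that each of $f_4,f_6,f_{8,1},f_{8,2},f_{10,1},f_{10,2}$ is invariant under the hyperoctahedral group $B_n$, and then that each lies in the kernel of the Laplacian $\Delta=\sum_{i=1}^n\partial^2/\partial x_i^2$, which is precisely the defining condition for membership in $\Harm_t(\mathbb{R}^n)$ recorded in Section~\ref{subsect:polynomial}. Thus ``$h$-harmonic'' here means annihilated by this $\Delta$, and the whole argument stays inside the differential setup already fixed in the preliminaries.

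The $B_n$-invariance is immediate from the construction. Every summand is ${\rm sym}$ applied to a monomial $x_1^{2a_1}\cdots x_r^{2a_r}$ in which each exponent is even; by (\ref{eq:sym1}) the operator ${\rm sym}$ averages over $\mathcal{S}_n$, so the output is a symmetric polynomial, and the evenness of all exponents makes it invariant under every sign change $x_i\mapsto -x_i$. Since $B_n=\mathcal{S}_n\ltimes(\mathbb{Z}/2)^n$ is generated by the permutations and the sign changes, each $f_j$ is $B_n$-invariant.

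For harmonicity I would use that $\Delta$ is symmetric in the variables and commutes with every sign change, hence carries the space of $B_n$-invariant homogeneous polynomials of degree $2d$ into that of degree $2d-2$. Therefore each $\Delta f_j$ is a linear combination of the monomial symmetric polynomials $m_\mu={\rm sym}(x_1^{2\mu_1}\cdots x_r^{2\mu_r})$ of the appropriate lower degree, and the assertion is exactly that all of its coordinates in this basis vanish. Applying the elementary rule $\Delta(x_1^{\lambda_1}\cdots x_n^{\lambda_n})=\sum_{k=1}^n\lambda_k(\lambda_k-1)x_1^{\lambda_1}\cdots x_k^{\lambda_k-2}\cdots x_n^{\lambda_n}$ term by term to each symmetrised monomial yields an explicit transition formula $\Delta m_\mu=\sum_\nu c_{\mu\nu}\,m_\nu$, where $\nu$ is obtained from $\mu$ by lowering a single part by one and the coefficients $c_{\mu\nu}$ are products of the relevant exponents with integer multiplicities counting the variables that carry a given part. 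The verification is then a finite linear computation: substitute this rule into each defining expression, collect the coefficient of every $m_\nu$ of degree $2d-2$, and confirm it is zero. The rational constants appearing in the definitions---$-6/(n-1)$, $180/((n-1)(n-2))$, and their analogues---are tuned exactly so that these collected coefficients cancel, and the explicit $n$-dependence is what forces the cancellation to hold simultaneously for every admissible $n$.

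I would treat the degrees in increasing order, since the mechanism visible at degrees $4$ and $6$ dictates the heavier cases. The main obstacle will be the accurate bookkeeping of combinatorial multiplicities for the degree-$10$ polynomials $f_{10,1}$ and $f_{10,2}$: there several monomial symmetric polynomials feed into the same lower-degree target, and one must track how the stabiliser-size normalisation in (\ref{eq:sym1}) interacts both with the factors $\lambda_k(\lambda_k-1)$ and with the range restrictions $n\ge 3,4,5$ that govern when a partition has enough variables to occur at all. The cleanest way to organise this, which I would adopt, is to pass to the variables $y_i=x_i^2$; there the $B_n$-invariants become genuine polynomials, the $m_\mu$ become ordinary monomial symmetric polynomials in the $y_i$, and $\Delta$ becomes the differential operator $\sum_{i=1}^n(4y_i\partial_{y_i}^2+2\partial_{y_i})$. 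In these coordinates the transition formula is transparent and the remaining verification reduces to routine coefficient matching, after which the $h$-harmonicity of all six polynomials follows.
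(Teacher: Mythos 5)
Your proposal is correct: the $B_n$-invariance follows exactly as you say from the evenness of all exponents together with the $\mathcal{S}_n$-averaging in (\ref{eq:sym1}), and harmonicity is indeed a finite coefficient check in the monomial symmetric basis --- e.g.\ $\Delta\,{\rm sym}(x_1^4)=12\sum_i x_i^2$ and $\Delta\,{\rm sym}(x_1^2x_2^2)=2(n-1)\sum_i x_i^2$ give $\Delta f_4=0$, and likewise $\Delta\,{\rm sym}(x_1^2x_2^6)=2(n-1)\,{\rm sym}(x_1^6)+30\,{\rm sym}(x_1^2x_2^4)$ together with $\Delta\,{\rm sym}(x_1^4x_2^4)=12\,{\rm sym}(x_1^4x_2^2)$ closes the case of $f_{8,1}$, with the degree-$10$ cases succumbing to the same bookkeeping you describe. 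The paper itself gives no internal proof of this lemma (it is imported from \cite{SHI2021}), so there is nothing to diverge from; your direct verification, including the simplifying substitution $y_i=x_i^2$ under which the Laplacian becomes $\sum_i(4y_i\partial_{y_i}^2+2\partial_{y_i})$, is the standard argument such a check amounts to.
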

\begin{lemma}\label{lem:vas-FullySymmetricDesign}
   Let 
\begin{align*}
\tilde{f}_{4}(v_{a,s}) &:= (a^2 + s)^2 f_{4}(v_{a,s}), \quad \tilde{f}_{6}(v_{a,s}) := (a^2 + s)^3 f_{6}(v_{a,s}) \\
 \tilde{f}_{8,1}(v_{a,s}) &:= (a^2 + s)^4 f_{8, 1}(v_{a,s}), \quad \tilde{f}_{8, 2}(v_{a,s}) := (a^2 + s)^4 f_{8, 2}(v_{a,s}) \\
  \tilde{f}_{10,1}(v_{a,s}) &:= (a^2 + s)^5 f_{10,1}(v_{a,s}), \quad \tilde{f}_{10,2}(v_{a,s}) 
  := (a^2 + s)^5 f_{10,2}(v_{a,s}). 
\end{align*}
   It holds that 
  \begin{align*}
    \tilde{f}_{4}(v_{a,s}) 
    & = a^4+s -6a^2\frac{s}{n-1}-3\frac{s(s-1)}{n-1}  \quad \text{for $n \geq 3$},\\
    \tilde{f}_{6}(v_{a,s}) 
    &= a^6+s-15(a^4+a^2+s-1)  \frac{s}{n-1}+ 90a^2\frac{s(s-1)}{(n-1)(n-2)} \\
    & \qquad +30 \frac{s(s-1)(s-2)}{(n-1)(n-2)} \quad \text{for $n \geq 3$},                               \\
    \tilde{f}_{8,1}(v_{a,s}) 
    &= a^8+s -28(a^6+a^2+s-1) \frac{s}{n-1}+ 70
    (a^4+\frac{s-1}{2})\frac{s}{n-1} \quad \text{for $n\geq 3$},\\
    \tilde{f}_{8, 2}(v_{a,s}) 
    &= a^4+\frac{s-1}{2} -3(a^4+2a^2+s-2)\frac{s-1}{n-2}+ \frac{9}{2}(4a^2+s-3)\frac{(s-1)(s-2)}{(n-2)(n-3)} \quad \text{for $n\geq4$}, \\
    \tilde{f}_{10,1}(v_{a,s}) 
    &= a^{10}+s-3(15a^8-14a^6-14a^4+15a^2+s-1)\frac{s}{n-1}\\
    & \qquad+126(4a^6 - 10a^4 + 3a^2 - s + 2)\frac{s(s-1)}{(n-1)(n-2)} \quad \text{for $n\geq3$},
    \\
    \tilde{f}_{10,2}(v_{a,s}) 
    &= a^4s + a^6s + s(s - 1)-3(a^6 + 10a^4 + 7a^2 + 6s - 12)\frac{s(s - 1)}{(n-2)}\\
    &\qquad +75(a^4 + 3a^2 + s - 3)\frac{s(s - 1)(s - 2)}{(n-2)(n-3)} \\ 
    &\qquad \qquad -90(5a^2 + s - 4)\frac{s(s - 1)(s - 2)(s - 3)}{(n-2)(n-3)(n-4)} \quad \text{for $n\geq 5$}.
  \end{align*}
\end{lemma}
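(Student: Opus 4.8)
The plan is to strip away the weight $(a^2+s)$ by homogeneity and thereby reduce everything to evaluating the polynomials of Lemma~\ref{lem:basisofHarm} at the \emph{unnormalized} corner vector. Write $u_{a,s} = (a,1,\ldots,1,0,\ldots,0) \in \mathbb{R}^n$ for the vector with one entry $a$, followed by $s$ entries equal to $1$ and $n-1-s$ zeros, so that $v_{a,s} = (a^2+s)^{-1/2}\, u_{a,s}$. Each of $f_4,f_6,f_{8,1},f_{8,2},f_{10,1},f_{10,2}$ is homogeneous of degree $d \in \{4,6,8,10\}$, so $f_d(v_{a,s}) = (a^2+s)^{-d/2} f_d(u_{a,s})$. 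Since $\tilde f_d$ is defined as $(a^2+s)^{d/2} f_d(v_{a,s})$, the normalizing powers cancel exactly and $\tilde f_d(v_{a,s}) = f_d(u_{a,s})$. I would first record this observation, after which the factor $(a^2+s)$ plays no further role and the lemma becomes the assertion that the displayed right-hand sides equal $f_d(u_{a,s})$.

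Next I would unwind the symmetrization notation of (\ref{eq:sym1}). A direct check shows that for any partition $\mu$ the polynomial ${\rm sym}(x^\mu)$ is exactly the monomial symmetric polynomial $m_\mu = \sum_\alpha x^\alpha$, summed over the distinct rearrangements $\alpha$ of $\mu$. Evaluating $m_\mu$ at $u_{a,s}$ is then a bookkeeping of how the parts of $\mu$ distribute among the nonzero coordinates: a monomial $x^\alpha$ survives only when every nonzero exponent lands on the $a$-coordinate or on one of the $s$ unit coordinates. Splitting according to which part, if any, is assigned to the $a$-coordinate yields the clean identity
\begin{equation*}
m_\mu(u_{a,s}) = A_\ell(\mu;s) + \sum_{e} a^{e}\, A_{\ell-1}(\mu\setminus\{e\};s),
\end{equation*}
where $\ell$ is the number of parts of $\mu$, the sum runs over the distinct part-values $e$ of $\mu$, and $A_r(\nu;s) = \frac{s!}{(s-r)!\,\prod_j r_j!}$ counts the placements of a partition $\nu$ with $r$ parts and part-multiplicities $r_j$ into the $s$ unit slots. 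The structural point is that $A_r(\nu;s)$ is a constant multiple of the falling factorial $s(s-1)\cdots(s-r+1)$, which is precisely why every entry of the lemma is a polynomial in $a$ whose coefficients are supported on $s,\ s(s-1),\ s(s-1)(s-2),\ldots$ as in the statement.

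With these two reductions the proof becomes a finite, mechanical computation: for each $f_d$ I would substitute the monomial-by-monomial evaluation into the explicit expansion from Lemma~\ref{lem:basisofHarm} and collect terms. For instance, $m_{(4)}(u_{a,s}) = a^4+s$ and $m_{(2,2)}(u_{a,s}) = s a^2 + \binom{s}{2}$ immediately give the formula for $\tilde f_4$, and degrees $6$ and $8$ proceed identically with partitions of at most four parts. I expect the only genuine obstacle to be the degree-$10$ cases $f_{10,1}$ and $f_{10,2}$, where partitions with up to five parts (the term ${\rm sym}(x_1^2x_2^2x_3^2x_4^2x_5^2)$ forcing $n \ge 5$) contribute several falling-factorial terms that must be combined without arithmetic slip. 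The dimension thresholds $n \ge 3,4,5$ are exactly the points below which the longest relevant partitions cannot be placed, so the corresponding terms vanish; no conceptual difficulty arises beyond this careful accounting.
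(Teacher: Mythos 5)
Your method is correct, and it is in effect the proof the paper omits: the paper states this lemma without any proof, treating it as a direct evaluation, and your two reductions organize exactly that evaluation. The homogeneity cancellation $\tilde f_d(v_{a,s})=f_d(u_{a,s})$ is exact because the normalizing power $(a^2+s)^{d/2}$ in each definition matches the degree $d$ of the corresponding $f_d$; the identification ${\rm sym}(x^\mu)=m_\mu$ follows from the stabilizer normalization in (\ref{eq:sym1}); and your placement count $A_r(\nu;s)=s!/\bigl((s-r)!\prod_j r_j!\bigr)$, a constant multiple of the falling factorial $s(s-1)\cdots(s-r+1)$, is the right combinatorial bookkeeping. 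Your degree-$4$ check is correct, and the entries for $\tilde f_6$, $\tilde f_{8,1}$, $\tilde f_{10,1}$ and $\tilde f_{10,2}$ do come out exactly as printed when the computation is pushed through.

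There is, however, one concrete issue hiding behind your phrase ``mechanical computation'': for $f_{8,2}$ your procedure does \emph{not} reproduce the printed right-hand side. Carrying it out gives
\begin{equation*}
m_{(4,4)}(u_{a,s})-\tfrac{6}{n-2}\,m_{(4,2,2)}(u_{a,s})+\tfrac{108}{(n-2)(n-3)}\,m_{(2,2,2,2)}(u_{a,s})
= s\Bigl(a^4+\tfrac{s-1}{2}-3(a^4+2a^2+s-2)\tfrac{s-1}{n-2}+\tfrac{9}{2}(4a^2+s-3)\tfrac{(s-1)(s-2)}{(n-2)(n-3)}\Bigr),
\end{equation*}
that is, $s$ times the formula stated in the lemma. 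Two sanity checks confirm that your computation, not the printed entry, is the true value of $(a^2+s)^4f_{8,2}(v_{a,s})$: at $s=0$ every monomial of $f_{8,2}$ involves at least two variables, so $f_{8,2}(v_{a,0})=0$, while the printed right-hand side is generically nonzero; and at $(a,s,n)=(1,2,4)$ one computes directly $\tilde f_{8,2}(v_{1,2})=3^4 f_{8,2}(v_{1,2})=-6$, whereas the printed formula gives $-3$. So the $\tilde f_{8,2}$ entry of the lemma is misstated by an overall factor of $s$. This slip is harmless for the paper's later arguments, which only use the vanishing of $\tilde f_{8,2}$ and its sign in situations with $s\ge 1$, but your write-up cannot simply assert that the scheme ``immediately gives'' all six displayed identities; it gives five of them verbatim and corrects the sixth.
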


Now, a crucial demerit of the corner-vector construction is that it cannot 
generate spherical cubature of degree larger than $8$, 
as shown in the following result.

\begin{theorem}[Bajnok's theorem, 
Proposition 15 in~\cite{Bajnok2006}]
\label{thm:Bajnok}
Let $n \ge 3$ be a positive integer.
Assume that $\bigcup_{i=1}^m v_{k_i}^{B_n}$ is a $B_n$-invariant weighted spherical $t$-design. Then it holds that $t \le 7$.
\end{theorem}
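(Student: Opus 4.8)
The plan is to reduce the design property to finitely many conditions on invariant $h$-harmonics and then to exhibit a single degree-$8$ harmonic whose sign is constant along the corner-vector family, which is incompatible with positive weights.

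First I would invoke Sobolev's theorem (Theorem~\ref{thm:Sobolev}) together with the orthogonal decomposition (Proposition~\ref{prop:decomposition1}). Writing $X=\bigcup_{i=1}^m v_{k_i}^{B_n}$ and $u_i = W_i\,|v_{k_i}^{B_n}|>0$ for the orbit-constant weights, $X$ is a spherical $t$-design precisely when, for every $B_n$-invariant $h$-harmonic $f$ of degree $\ell$ with $1\le \ell\le t$, one has $\sum_{i=1}^m u_i f(v_{k_i})=0$. Indeed $f$ is constant on each $B_n$-orbit, so $\sum_{x\in v_{k_i}^{B_n}} f(x)=|v_{k_i}^{B_n}|\,f(v_{k_i})$, while $\int_{\mathbb{S}^{n-1}} f\,d\rho=0$ because $\deg f\ge 1$. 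Suppose, for contradiction, that $t\ge 8$; then $X$ is in particular an $8$-design, so the degree-$8$ conditions must hold. By Corollary~\ref{cor:dimension4-8} the space $\Harm_8(\mathbb{S}^{n-1})^{B_n}$ contains $f_{8,1}$, which by Lemma~\ref{lem:basisofHarm} is a legitimate $B_n$-invariant $h$-harmonic for every $n\ge 3$. In particular
\[
\sum_{i=1}^m u_i\, f_{8,1}(v_{k_i}) = 0 .
\]

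The crux is the evaluation of $f_{8,1}$ at the corner vectors. A corner vector $v_k$ is exactly the generalized corner vector $v_{a,s}$ with $a=1$ and $s=k-1$, so I would apply Lemma~\ref{lem:vas-FullySymmetricDesign} with $a=1$. Since $\tilde{f}_{8,1}(v_{a,s})=(a^2+s)^4 f_{8,1}(v_{a,s})$ and $(a^2+s)^4>0$, the sign of $f_{8,1}(v_k)$ equals that of $\tilde{f}_{8,1}(v_{1,k-1})$. Substituting $a=1$, $s=k-1$ into the formula for $\tilde{f}_{8,1}$ and simplifying, the two cross terms collapse, $(-28+35)=7$, and one obtains the clean factorization
\[
\tilde{f}_{8,1}(v_{1,s}) = (1+s)\left(1+\frac{7s}{\,n-1\,}\right),
\]
which is strictly positive for all $s\in\{0,1,\ldots,n-1\}$ and all $n\ge 3$. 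Hence $f_{8,1}(v_k)>0$ for every corner vector.

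This yields the contradiction at once: in the displayed degree-$8$ condition each weight $u_i$ is positive and each value $f_{8,1}(v_{k_i})$ is positive, so the left-hand side is strictly positive and cannot vanish. Therefore $X$ is not an $8$-design, forcing $t\le 7$. The only genuine computation is this single evaluation of $f_{8,1}$ on the corner-vector family; the main point of the argument — and what makes it far shorter than a Farkas-type infeasibility analysis over all of $\Harm_4\oplus\Harm_6\oplus\Harm_8$ — is the observation that $f_{8,1}$ already has constant positive sign when $a=1$, so one degree-$8$ harmonic suffices and $f_{8,2}$, $f_4$, $f_6$ are never needed. I expect the only subtlety to be carrying out the substitution $a=1$ carefully, so that the terms with coefficients $28$ and $70$ combine into the single residual $7s(s+1)/(n-1)$; the factorization then makes positivity transparent, and the case $n=3$ (where $\Harm_8(\mathbb{S}^{n-1})^{B_n}$ is one-dimensional, spanned by $f_{8,1}$) is covered verbatim by the same formula.
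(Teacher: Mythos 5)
Your proof is correct. Note that the paper does not prove this statement itself---it is quoted from Bajnok~\cite{Bajnok2006}, Proposition~15---so the comparison here is with the paper's toolkit rather than with an internal proof. Your argument is exactly in the spirit of the paper's own Lemma~\ref{lem:Harm81}, which runs the same positivity argument with $\tilde f_{8,1}$ for general $a$ and $s\in\{0,1,2\}$, $n\ge 8$; you specialize instead to $a=1$ and arbitrary $s$, where the point is that the sign is constant. The key computation checks out: substituting $a=1$, $s=k-1$ into Lemma~\ref{lem:vas-FullySymmetricDesign} gives
\[
\tilde f_{8,1}(v_{1,s}) \;=\; 1+s+(-28+35)(s+1)\frac{s}{n-1} \;=\; (1+s)\Bigl(1+\frac{7s}{n-1}\Bigr) \;>\;0 ,
\]
and since $\tilde f_{8,1}$ and $f_{8,1}$ differ by the positive factor $(a^2+s)^4$, every corner vector gives a strictly positive value of $f_{8,1}$. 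Combined with $\frac{1}{|\mathbb{S}^{n-1}|}\int_{\mathbb{S}^{n-1}} f_{8,1}\,d\rho=0$ (orthogonality to constants, Proposition~\ref{prop:decomposition1}) and positivity of the weights, the degree-$8$ design equation $\sum_i u_i f_{8,1}(v_{k_i})=0$ is impossible, so $t\le 7$. Two minor remarks: (1) you do not actually need Theorem~\ref{thm:Sobolev}, nor orbit-constant weights---applying the design identity directly to the single polynomial $f_{8,1}$, which is constant on each orbit, yields $\sum_i u_i f_{8,1}(v_{k_i})=0$ with $u_i$ the total weight of the $i$-th orbit, and that is all you use; (2) the dimension count for $n=3$ is likewise unnecessary, since the argument only requires that $f_{8,1}\in\Harm_8(\mathbb{S}^{n-1})^{B_n}$ for all $n\ge 3$, which is Lemma~\ref{lem:basisofHarm}.
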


To push up the maximum degree of design, we give a generalization of
the notion of corner vectors.

\begin{definition}\label{def:generalized}
Let $e_1,\ldots,e_n$ be the standard basis vectors in $\mathbb{R}^n$.
Let $a$ be a positive real number and $s=0,\ldots,n-1$. 
Let 
\begin{equation*}
   v_{a,s} = \frac{1}{\sqrt{a^2+s}}(ae_1+\sum_{i=1}^s e_{i+1}) \in \mathbb{S}^{n-1}.
\end{equation*}
A {\it generalized corner vector (for $B_n$)} is an element of $v_{a,s}^{B_n}$.
In particular
when $a = 1$,  $v_{1,s}$ exactly coincides with the corner vector $v_{s+1}$.
Moreover, when $a \neq 1$, we refer to $v_{a, s}^{B_n}$ as a \textit{proper orbit}. 
\end{definition}

\begin{example}\label{ex:generalized-corner1}
For $n=2$, take generalized corner vectors of type
\begin{equation*}
        v_{1}^{B_2}=\{(\pm1,0),(0,\pm1)\}, \quad
        v_{\sqrt{3},1}^{B_2}= \Big \{\Big (\pm\frac{1}{2},\pm\frac{\sqrt{3}}{2} \Big),\Big (\pm\frac{\sqrt{3}}{2},\pm\frac{1}{2}\Big) \Big \}.
\end{equation*}
As already seen in Example~\ref{ex:spherical_dim2},
these are the vertices
of a regular dodecagon inscribed in $\mathbb{S}^1$, respectively.
\end{example}

The following explains a geometric interpretation of generalized corner vectors.

\begin{remark}[See, for example, Heo-Xu~\cite{HX2001}]\label{rem:GCV1}
For $n = 3$, we consider the map
\[
\psi: \mathbb{S}^2 \rightarrow T^2, \quad \psi(x_1,x_2,x_3) = (x_1^2,x_2^2).
\]
For a finite subset $X$ of $\mathbb{S}^2$, we write $\psi(X)$ for $\{ \psi(x) \mid x \in X \}$. 
Then $\psi(v_{a,0}^{B_3})$ is the vertex set of $T^2$, namely, 
$\psi(v_{a,0}^{B_3})=\{(0,0),(1,0),(0,1)\}$.
Moreover,
$\psi(v_{a,1}^{B_3})$ is included in the boundary of $T^2$, namely
\[
\begin{gathered}
\psi(v_{a,1}^{B_3}) \subset \{ (x_1,x_2) \in \mathbb{R}^2 \mid x_2 = -x_1+1,\; 0 \le x_1 \le 1\} \cup \{ (x_1,x_2) \in \mathbb{R}^2 \mid x_2 = 0,\; 0 \le x_1 \le 1\} \\
\cup \{ (x_1,x_2) \in \mathbb{R}^2 \mid x_1 = 0,\; 0 \le x_2 \le 1\}.
\end{gathered}
\]
Also, $\psi(v_{a,2}^{B_3})$ is included in the medians of $T^2$, and more precisely
\[
\begin{gathered}
\psi(v_{a,2}^{B_3}) \subset 
\{ (x_1,x_2) \in \mathbb{R}^2 \mid x_2 = x_1,\; 0 \le x_1 \le 1/2\}
\cup \{ (x_1,x_2) \in \mathbb{R}^2 \mid x_2 = -2x_1+1,\; 0 \le x_1 \le 1/2\} \\
\cup \{ (x_1,x_2) \in \mathbb{R}^2 \mid x_1 = -2x_2+1,\; 0 \le x_2 \le 1/2\}.
\end{gathered}
\]
$\psi(v_{1,s}^{B_3})$ coincides with the set of the barycenters of $s$-dimensional faces of $T^2$ 
(see Figure~\ref{fig:GCV1} below).
These observations can also be found in Heo-Xu~\cite{HX2001}, where the situation is explained for regular triangles embedded in $\mathbb{R}^3$. 
A more general treatment of 
Remark~\ref{rem:GCV1}
will be established in Section~\ref{sect:bound}.
\end{remark}
\begin{figure}[htb]
\includegraphics[width=12cm]{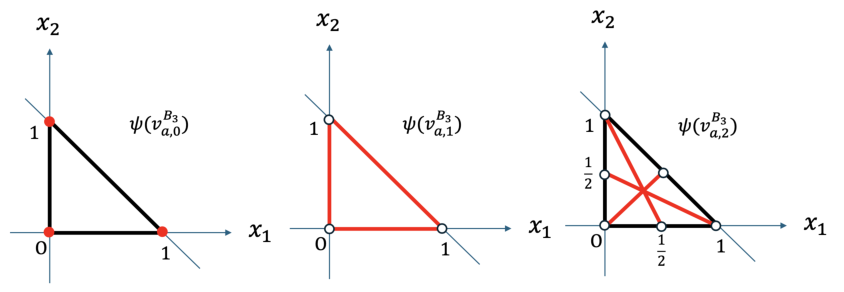}
\caption{$\psi(v_{a, s}^{B_3}) \; (s = 0, 1, 2)$} 
\label{fig:GCV1}
\end{figure}

Hereafter we restrict our attention to the case where $n \ge 3$.
Let $s_1, \ldots, s_k$ be integers with $0 \leq s_i \le n-1$ for all $i$, and let $a_1,\ldots,a_k$ be positive real numbers.
Then, as a generalization of the corner-vector method, we consider a design of type
\begin{equation}
   \label{eq:invariant1}
   \frac{1}{|\mathbb{S}^{n-1}|} \int_{\mathbb{S}^{n-1}} f(y) d\rho
   =
   \sum_{i=1}^k W_i \sum_{x \in v_{a_i,s_i}^{B_n}} f(x)\quad
   \textit{for every } f \in \mathcal{P}_t(\mathbb{S}^{n-1})
\end{equation}

\begin{example}(Example~\ref{ex:spherical_dim4}, revisited).
\label{ex:generalized-corner2}
For $n = 4$, Schur's formula (\ref{eq:Schur1}) can be rewritten in terms of generalized corner vectors as follows:
\begin{equation*}
\begin{gathered}
\frac{1}{|\mathbb{S}^3|} \int_{\mathbb{S}^3} f(y_1,y_2,y_3,y_4) d\rho 
=
\frac{9}{640} \sum_{x \in  v_{2,2}^{B_4}} f(x) 
+ \frac{1}{60} \sum_{x \in v_{1}^{B_4}} f(x) 
+ \frac{1}{96} \sum_{x \in v_{2}^{B_4}} f(x)
+ \frac{1}{60} \sum_{x \in v_{4}^{B_4}} f(x).
\end{gathered}
\end{equation*}
This can be checked by applying Theorem~\ref{thm:Sobolev} to $ f_4,f_6,f_{8,1},f_{8,2}$ and $f_{10,1}$.
\end{example}

What is remarkable here is that Schur's formula has degree $11$, contrary to the situation of Bajnok's theorem.
More generally, Sawa and Xu~\cite{SX2013} establishes
that the maximum degree of a $B_n$-invariant design with 
only one proper orbit
$v_{a,s}^{B_n}$ can be pushed up to $11$.
Then what about designs with 
two or more proper orbits
$v_{a_1,s_1}^{B_n},v_{a_2,s_2}^{B_n},\ldots$? 
This is the main subject of this paper, although in Section~\ref{sect:single}, some new results are established for designs with a single proper orbit $v_{a,s}^{B_n}$.

\section{A uniform upper bound for the degree of our designs}\label{sect:bound}

Let $n, s_i$ be integers with $n \geq 3$  and $0\le s_i \le n-1$, and let $a_i>0$.
In this section
we prove an upper bound for the degree of a weighted spherical design of type (\ref{eq:invariant1}). The presentation in this section is conscious of algebraic and geometric interpretations of generalized corner vectors.
  
\subsection{Uniform bound} \label{subsect:bound_main}

\begin{theorem}\label{thm:main_bound}
Let $n\geq4$. Suppose that 
\begin{equation}
   \label{eq:invariant2}
   \frac{1}{|\mathbb{S}^{n-1}|} \int_{\mathbb{S}^{n-1}} f(y) d\rho
   =
   \sum_{i=1}^k W_i \sum_{x \in v_{a_i,s_i}^{B_n}} f(x)\quad
   \textit{for every } 
   f \in \mathcal{P}_t(\mathbb{S}^{n-1}).
\end{equation}
Then it holds that $t \le 15$.
\end{theorem}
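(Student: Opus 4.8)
The plan is to combine the two reductions advertised in the introduction: Xu's theorem (Theorem~\ref{thm:simplex}), which transports the problem to a fully symmetric cubature on the simplex $T^{n-1}$, and the cross-ratio comparison for Hilbert identities, which supplies the actual numerical obstruction. Since a design of degree $t$ is in particular exact for $\Hom_{2\tau}(\mathbb{S}^{n-1})$ whenever $2\tau \le t$, it suffices to prove that~(\ref{eq:invariant2}) cannot have index $16$; equivalently, by the Lyubich-Vaserstein equivalence (Theorem~\ref{thm:equivalence_sphere_identity}), that the generalized corner vectors cannot produce a Hilbert identity of degree $8$. Establishing this forces $t \le 15$.

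First I would record, for general $n$, the geometric description of the nodes hinted at in Remark~\ref{rem:GCV1}. Writing $\psi(x_1,\dots,x_n) = (x_1^2,\dots,x_{n-1}^2)$, the image $\psi(v_{a,s}^{B_n})$ is the fully symmetric orbit of the point whose barycentric coordinates take the three values $a^2/(a^2+s)$ (once), $1/(a^2+s)$ ($s$ times) and $0$ (otherwise). As $a$ ranges over $\mathbb{R}_{>0}$ with $s$ fixed, such a point moves along a median of $T^{n-1}$ joining a vertex to the barycentre of the opposite $s$-face, the corner vector $a=1$ sitting at the barycentre of the $(s+1)$-face. By Theorem~\ref{thm:simplex}, formula~(\ref{eq:invariant2}) of odd degree $2\tau+1$ is equivalent to a fully symmetric cubature of degree $\tau$ on $T^{n-1}$ with weight $W$, all of whose nodes lie on this union of medians; integrating out all but one coordinate produces a univariate quadrature against the Jacobi weight $y_1^{-1/2}(1-y_1)^{(n-3)/2}$ on $[0,1]$, so the dimension $n$ enters the analysis through this weight.

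The core step passes to a two-variable situation and invokes the cross-ratio comparison. Restricting the degree-$\tau$ Hilbert identity to the plane $X_3 = \cdots = X_n = 0$ — a restriction that preserves the Hilbert-identity form — yields a two-variable Hilbert identity, equivalently a weighted design of index $2\tau$ on $\mathbb{S}^1$. Computing the restrictions of the linear forms $\langle v_{a_i,s_i}, X\rangle$ and of their $B_n$-orbits, I would show that the corresponding directions, after the angle-doubling appropriate to even harmonics, lie in the structured finite set $\{\pm 1,\ \pm\sqrt{-1}\}\cup\{\pm e^{\pm 2\sqrt{-1}\,\beta_i}\}$ with $\beta_i=\arctan(1/a_i)$: the universal points $\pm 1, \pm\sqrt{-1}$ are forced by the corner sign patterns, while the orbit-dependent points encode the parameters $a_i$. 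The cross-ratio comparison of Nozaki-Sawa (cf.~\cite[Theorem 6.6]{NS2013}) is then applied to this configuration: viewing the forms as points on $\mathbb{P}^1$, the projective cross-ratios of the $a_i$-dependent nodes against the fixed universal nodes are pinned down by the requirement that the aggregated positive weights annihilate the Fourier modes $1,\dots,\tau$, and this over-determination is incompatible with $\tau\ge 8$ once $n\ge 4$. The interplay of the Jacobi weight from the previous paragraph with these constraints is what separates the bound $t\le 15$ for $n\ge 4$ from the larger value available when $n=3$.

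The main obstacle is precisely this last step: making the cross-ratio argument uniform in the number $k$ of orbits, the support sizes $s_i\in\{0,\dots,n-1\}$, and the continuous parameters $a_i>0$ simultaneously, so that no configuration of arbitrarily many proper orbits can conspire to raise the index. A secondary technical point is to verify that the two-variable restriction does not degenerate — that is, that enough nodes survive with positive aggregated weight — which is where the hypothesis $n\ge 4$ is genuinely used, and to check that the even-degree case reduces cleanly to the odd one.
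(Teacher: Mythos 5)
Your proposal is a plan, not a proof, and you concede this yourself: the ``main obstacle'' you name at the end --- making the circle-restriction argument uniform in $k$, the $s_i$ and the $a_i$ --- is exactly the content of the theorem, and nothing in the proposal overcomes it. Worse, the core step cannot work as stated. Restricting the putative degree-$8$ Hilbert identity to $X_3=\cdots=X_n=0$ does yield a two-variable Hilbert identity, i.e.\ a $B_2$-invariant weighted design of index $16$ on $\mathbb{S}^1$; but such configurations exist for \emph{every} index --- the paper's own introduction notes that a regular $4k$-gon is a union of $B_2$-orbits of generalized corner vectors and is a $(4k-1)$-design. So the circle picture alone carries no obstruction: any contradiction would have to come from the specific combinatorial aggregation of the weights under projection (how many forms of each orbit collapse onto $\pm X_1\pm X_2$, $\pm X_1$, $(a_iX_1\pm X_2)$, etc.), and you never analyze those aggregated weights. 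You have also misread what the Nozaki--Sawa ``cross-ratio comparison'' is: it is not a projective cross-ratio computation on $\mathbb{P}^1$, but a comparison of ratios of coefficients of chosen \emph{monomials} on the two sides of the full $n$-variable identity. The paper applies it to $X_1^2X_2^2X_3^6X_4^6$, $X_1^2X_2^4X_3^4X_4^6$ and $X_1^4X_2^4X_3^4X_4^4$, obtaining two linear relations whose difference is $\sum_i A_i\,a_i^2(a_i^2-1)^2=0$ with $A_i=\lambda_i 2^{s_i}\binom{n-4}{s_i-3}$, which forces $a_i=1$ and contradicts Bajnok's theorem.

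The second gap is structural: the coefficients $A_i$ above vanish unless $s_i\ge 3$, so the cross-ratio comparison only ``sees'' orbits with $s_i\ge 3$, and the paper must split into cases. When all $s_i\le 2$ it needs two entirely different arguments: for $n\ge 8$, a positivity argument with the $B_n$-invariant harmonic $\tilde f_{8,1}$ (each $A_{a_i,s_i}$ is forced positive, so the invariance equation cannot hold past degree $7$); and for $4\le n\le 7$, Xu's simplex correspondence combined with the degree-$n$ polynomial $\bigl(\prod_{i=1}^{n-1}x_i\bigr)\bigl(1-\sum_j x_j\bigr)$, which vanishes on $\partial T^{n-1}$ --- where all projected nodes with $s_i\le n-2$ lie --- yet has positive integral, giving $t\le 2n-1\le 13$. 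Your invocation of Xu's theorem (medians of the simplex, a univariate Jacobi quadrature) gestures at this geometry but never uses it to derive a contradiction, and your proposal contains no substitute for either of these two case arguments. To repair the proof you should abandon the two-variable restriction, run the coefficient comparison in four variables as above, and then supply the two complementary lemmas for the small-$s_i$ regime.
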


\begin{remark}\label{rem:main_bound}
As shown by Heo and Xu~\cite[Theorem~2.1]{HX2001}, 
the maximum degree of 
weighted spherical designs on $\mathbb{S}^2$
of type~(\ref{eq:invariant2}) is upper bounded by $17$.
\end{remark}

Since designs of type (\ref{eq:invariant2}) are centrally symmetric, it suffices to take care of even degrees $2,4,\ldots,2\lfloor t/2 \rfloor$.
As a corollary of Theorems~\ref{thm:main_bound} and~\ref{thm:equivalence_sphere_identity}, we obtain the following result.

\begin{corollary}\label{cor:main_bound}
Suppose that
\begin{equation}
\label{eq:invariant3}
c_{n,r} (X_1^2+\cdots+X_n^2)^r
= \sum_{i=1}^k \lambda_i \sum_{\sigma, \pm} (a_i X_{\sigma(1)} \pm X_{\sigma(2)} \pm \cdots \pm X_{\sigma(s_{i}+1)})^{2r}
\end{equation}
where the second summation is taken over all permutations 
$\sigma \in \mathcal{S}_n$ 
and all sign changes $\pm$.
Then it holds that $r \le 7$.
\end{corollary}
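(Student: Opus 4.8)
The plan is to read the identity (\ref{eq:invariant3}) through the Lyubich--Vaserstein dictionary (Theorem~\ref{thm:equivalence_sphere_identity}) and then invoke the degree bound of Theorem~\ref{thm:main_bound}; throughout I take $n \ge 4$, as inherited from Theorem~\ref{thm:main_bound}. First I would normalise each linear form. A point $x \in v_{a_i,s_i}^{B_n}$ has the shape $x = \frac{1}{\sqrt{a_i^2+s_i}}(\pm a_i e_{\sigma(1)} \pm e_{\sigma(2)} \pm \cdots \pm e_{\sigma(s_i+1)})$ for some $\sigma \in \mathcal{S}_n$ and some sign pattern, so $\langle x, X\rangle^{2r} = (a_i^2+s_i)^{-r}(a_i X_{\sigma(1)} \pm \cdots \pm X_{\sigma(s_i+1)})^{2r}$. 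Hence the inner sum $\sum_{\sigma,\pm}(a_i X_{\sigma(1)} \pm \cdots \pm X_{\sigma(s_i+1)})^{2r}$ equals $(a_i^2+s_i)^{r}\sum_{x \in v_{a_i,s_i}^{B_n}}\langle x, X\rangle^{2r}$, and (\ref{eq:invariant3}) becomes exactly the identity of Theorem~\ref{thm:equivalence_sphere_identity}(ii) with points $\bigcup_{i=1}^k v_{a_i,s_i}^{B_n}$ and weights $c_x = \lambda_i (a_i^2+s_i)^{r}$ for $x \in v_{a_i,s_i}^{B_n}$, which are positive since $\lambda_i>0$ in a Hilbert identity and $a_i^2+s_i>0$.

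By Theorem~\ref{thm:equivalence_sphere_identity}, this is equivalent to the configuration $\bigcup_{i=1}^k v_{a_i,s_i}^{B_n}$, weighted by the $c_x$, being a cubature for $\Hom_{2r}(\mathbb{S}^{n-1})$. To reach a genuine $t$-design I would upgrade ``index $2r$'' to ``degree $2r$''. Restricting to the sphere, where $X_1^2+\cdots+X_n^2 = 1$, the decomposition of a homogeneous polynomial into $h$-harmonics gives $\Hom_{2r}(\mathbb{S}^{n-1}) = \bigoplus_{j=0}^{r}\Harm_{2j}(\mathbb{S}^{n-1})$, so the cubature already holds for every even-degree $h$-harmonic up to degree $2r$. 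Since $-I \in B_n$, each orbit $v_{a_i,s_i}^{B_n}$ is centrally symmetric and the weight is constant on orbits, so both sides of the cubature vanish identically on odd-degree harmonics. Combining the even and odd cases with Proposition~\ref{prop:decomposition1} yields a cubature for $\mathcal{P}_{2r}(\mathbb{S}^{n-1}) = \bigoplus_{i=0}^{2r}\Harm_i(\mathbb{S}^{n-1})$; that is, the weighted configuration is a weighted spherical $2r$-design of type (\ref{eq:invariant2}) with $t = 2r$.

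It then remains to apply Theorem~\ref{thm:main_bound}, which for $n \ge 4$ forces $t = 2r \le 15$, whence $r \le 7$ because $r$ is a positive integer. The argument is essentially a translation between two pictures, so I expect the only real care to be needed in the bridging step: justifying that a cubature for $\Hom_{2r}(\mathbb{S}^{n-1})$ over a centrally symmetric, orbit-constant-weighted configuration is automatically a full $2r$-design. This rests on the harmonic decomposition of $\Hom_{2r}$ restricted to the sphere together with the vanishing of the odd moments. The low-dimensional cases $n \le 3$ lie outside Theorem~\ref{thm:main_bound} (and indeed admit higher degrees, as the regular-polygon example on $\mathbb{S}^1$ shows), so the statement is understood in the regime $n \ge 4$.
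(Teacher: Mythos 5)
Your proposal is correct and takes essentially the same route the paper intends: read (\ref{eq:invariant3}) through Theorem~\ref{thm:equivalence_sphere_identity} as a positively weighted cubature of index $2r$ on the orbits $v_{a_i,s_i}^{B_n}$, upgrade it to a full weighted $2r$-design of type (\ref{eq:invariant2}) via the harmonic decomposition of $\Hom_{2r}(\mathbb{S}^{n-1})$ and central symmetry, and then apply Theorem~\ref{thm:main_bound} to get $2r \le 15$, i.e.\ $r \le 7$. The only (harmless) imprecision is that the sum over $(\sigma,\pm)$ hits each orbit point with a uniform positive multiplicity (namely $s_i!\,(n-s_i-1)!/2$ for $a_i \neq 1$) rather than exactly once, but this constant is absorbed into the weights and does not affect the argument.
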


The proof of Theorem~\ref{thm:main_bound}
is substantially divided into three parts. The first part is an application of the {\it cross-ratio comparison} that was first introduced by Nozaki and Sawa~\cite[Theorem 6.6]{NS2013}. The remaining two parts involve elementary calculus on $B_n$-invariant $h$-harmonics
of degree $8$ and Xu's characterization theorem (see Theorem~\ref{thm:simplex}), respectively.

\subsection{Proof of Theorem~\ref{thm:main_bound}}\label{subsect:proof_bound}

As briefly mentioned in Section~\ref{subsect:bound_main}, 
we need three preliminary lemmas, each including a key idea that can also be applied in the study of design
of a different type than~(\ref{eq:invariant2}).

\begin{lemma}\label{lem:CRC}
Let $n \ge 4$.
Suppose that there exists a weighted $t$-design of type (\ref{eq:invariant2}) with $s_i \ge 3$ for some $i$. Then it holds that $t \le 15$.
\end{lemma}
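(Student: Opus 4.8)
The plan is to exploit the connection with Hilbert identities (Theorem~\ref{thm:equivalence_sphere_identity}) and apply the cross-ratio comparison of Nozaki and Sawa~\cite[Theorem 6.6]{NS2013} to the orbit that contains a generalized corner vector with $s_i \ge 3$. First I would translate the hypothesis into the language of Hilbert identities: a $t$-design of type (\ref{eq:invariant2}) is, by Theorem~\ref{thm:equivalence_sphere_identity}, equivalent to a Hilbert identity of degree $2r$ for each even $2r \le t$, and in particular for the top even degree. The key observation is that the coordinate pattern of the generalized corner vector $v_{a_i,s_i}$ with $s_i \ge 3$ produces, among the summands $(a_i X_{\sigma(1)} \pm X_{\sigma(2)} \pm \cdots \pm X_{\sigma(s_i+1)})^{2r}$, a family of linear forms whose coefficient vectors realize at least four distinct values among the pairwise inner products (or equivalently, at least four distinct ``cross-ratio'' data points) once $s_i \ge 3$, because one then has enough nonzero entries to vary the number of agreeing signs.

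Next I would set up the cross-ratio comparison itself. The idea of that method is that if a Hilbert identity holds to a given degree, then certain cross-ratios formed from the inner products of the participating unit vectors must satisfy rigid polynomial constraints, and comparing these constraints across the several sign-change configurations arising within a single orbit $v_{a_i,s_i}^{B_n}$ forces an upper bound on the admissible degree $2r$. Concretely, I would isolate, inside the fixed orbit with $s_i \ge 3$, a subfamily of points (say, those obtained by fixing the support and varying the signs on the $s_i$ unit coordinates) whose mutual inner products take a controlled set of values depending only on $a_i$ and $s_i$; feeding these inner-product values into the cross-ratio identity of~\cite[Theorem 6.6]{NS2013} yields an equation in $r$ that is inconsistent once $r$ is too large.

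The main obstacle, and the step I would spend the most care on, is verifying that the $s_i \ge 3$ hypothesis genuinely supplies \emph{enough distinct} inner-product values for the cross-ratio comparison to bite — i.e. that the relevant configuration is not degenerate for the particular parameter $a_i$. When $s_i \le 2$ the orbit is too ``thin'' (too few sign-varying coordinates) and the cross-ratios collapse, which is exactly why the bound requires $s_i \ge 3$; so I must confirm that for $s_i \ge 3$ the four or more inner products $v_{a_i,s_i} \cdot (\text{sign-flipped copy})$ are pairwise distinct as functions of the flips, generically in $a_i$, and handle any exceptional value of $a_i$ separately. Once the nondegeneracy is in hand, the cross-ratio identity is a polynomial in $r$ of bounded degree, and reading off its largest admissible root gives $t \le 15$; I would finish by checking that the numerical bound produced by the comparison is precisely $15$ rather than something weaker, tightening the estimate by using the best available inner-product spread from the $s_i \ge 3$ orbit.
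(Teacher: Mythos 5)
Your proposal correctly names the two ingredients the paper actually uses --- passing to a Hilbert identity via Theorem~\ref{thm:equivalence_sphere_identity} (in the form of Corollary~\ref{cor:main_bound}) and applying the cross-ratio comparison of~\cite{NS2013} to exploit an orbit with $s_i\ge 3$ --- but it misdescribes what the cross-ratio comparison is, and the argument you build on that misdescription would not go through. The method does not impose ``rigid polynomial constraints'' on cross-ratios formed from \emph{inner products between points of the orbit}, and it does not yield ``an equation in $r$'' whose largest admissible root is read off as the degree bound. What is compared are ratios of \emph{coefficients of monomials} on the two sides of the identity (\ref{eq:invariant3}) at one fixed degree. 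Concretely, the paper takes $2r=16$ and the three monomials $X_1^2X_2^2X_3^6X_4^6$, $X_1^2X_2^4X_3^4X_4^6$, $X_1^4X_2^4X_3^4X_4^4$. On the left side both consecutive coefficient ratios equal $2:3$; on the right side each coefficient is a sum $\sum_i A_i(\cdots)$ over the orbits, with $A_i=\lambda_i 2^{s_i}\binom{n-4}{s_i-3}$, which vanishes unless $s_i\ge 3$. Equating the left and right ratios twice gives two linear relations, and subtracting them produces the cancellation
\begin{equation*}
0=\sum_i A_i\, a_i^2\,(a_i^2-1)^2 ,
\end{equation*}
which forces $a_i=1$ for every contributing orbit; the contradiction is then reached via Bajnok's theorem (Theorem~\ref{thm:Bajnok}). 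In particular the bound $15$ is not the root of any equation in $r$: it arises because the contradiction is manufactured at degree $16$.

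Your nondegeneracy step is likewise a red herring. The hypothesis $s_i\ge 3$ does not enter through ``enough distinct inner-product values among sign-flipped copies''; it enters through \emph{support}: a linear form $a_iX_{\sigma(1)}\pm X_{\sigma(2)}\pm\cdots\pm X_{\sigma(s_i+1)}$ with $s_i+1\le 3$ nonzero coefficients cannot produce any monomial divisible by $X_1X_2X_3X_4$, so orbits with $s_i\le 2$ contribute nothing to the chosen coefficients, while an orbit with $s_i\ge 3$ contributes with explicit weights such as $A_i(2a_i^2+2a_i^6+s_i-3)$. Without identifying the monomials, computing these coefficients, and exhibiting the cancellation above, there is no proof; since your version of the comparison step is not a correct statement of the method, the gap is not one of detail but of the central mechanism.
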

\begin{proof}
The proof is based on the cross-ratio comparison for monomials 
$X_1^2 X_2^2 X_3^6 X_4^6$, $X_1^2 X_2^4 X_3^4 X_4^6$ and $X_1^4 X_2^4 X_3^4 X_4^4$.

First we compare the ratio of the coefficients of 
$X_1^2 X_2^2 X_3^6 X_4^6$ and $X_1^2 X_2^4 X_3^4 X_4^6$ 
on the both sides of (\ref{eq:invariant3}).
On the left side, we have
\[
\binom{8}{1} \binom{7}{3} \binom{4}{1} : \binom{8}{1} \binom{7}{3} \binom{4}{2}
= \binom{4}{1} : \binom{4}{2} = 2:3.
\]
Whereas, on the right side, we have
\[
\begin{gathered}
\binom{16}{2} \binom{14}{6} \binom{8}{2} \sum_i A_i (2a_i^2 + 2a_i^6 + s_i -3) : \binom{16}{2} \binom{14}{6} \binom{8}{4}  \sum_i A_i (a_i^2 + 2a_i^4 + a_i^6 + s_i -3) \\
= 2 \sum_i A_i (2a_i^2 + 2a_i^6 + s_i -3) : 5 \sum_i A_i (a_i^2 + 2a_i^4 + a_i^6 + s_i -3)
\end{gathered}
\]
where 
$A_i = \lambda_i 2^{s_i} \binom{n-4}{s_i-3}$
for $i=1,\ldots,k$.
Then
\[
3 \sum_i A_i (2a_i^2 + 2a_i^6 + s_i -3) = 5 \sum_i A_i (a_i^2 + 2a_i^4 + a_i^6 + s_i -3)
\]
and therefore
\begin{equation}\label{eq:CRC1}
\sum_i A_i (a_i^6 -10a_i^4 + a_i^2) = 2 \sum_i A_i (s_i -3).
\end{equation}

Next we compare the ratio of the coefficients of 
$X_1^2 X_2^4 X_3^4 X_4^6$ and $X_1^4 X_2^4 X_3^4 X_4^4$
on the both sides of (\ref{eq:invariant3}).
On the left side, we have
\[
\binom{8}{2} \binom{6}{2} \binom{4}{3} : \binom{8}{2} \binom{6}{2} \binom{4}{2}
= \binom{4}{3} : \binom{4}{2} = 2:3.
\]
Whereas on the right side, we have
\[
\begin{gathered}
\binom{16}{4} \binom{12}{4} \binom{8}{6} \sum_i A_i (a_i^2 + 2a_i^4 + a_i^6 + s_i -3) : \binom{16}{4} \binom{12}{4} \binom{8}{4}  \sum_i A_i (4a_i^4 + s_i -3) \\
2 \sum_i A_i (a_i^2 + 2a_i^4 + a_i^6 + s_i -3) : 5 \sum_i A_i (4a_i^4 + s_i -3).
\end{gathered}
\]
Then
\[
5 \sum_i A_i (4a_i^4 + s_i -3) = 3 \sum_i A_i (a_i^2 + 2a_i^4 + a_i^6 + s_i -3)
\]
and therefore
\begin{equation}\label{eq:CRC2}
\sum_i A_i (3a_i^6 -14a_i^4 + 3a_i^2) = 2 \sum_i A_i (s_i -3).
\end{equation}

In summary, by subtracting (\ref{eq:CRC2}) from (\ref{eq:CRC1}), we have
\begin{equation*}
0 = \sum_i A_i (a_i^6 -2a_i^4 + a_i^2) = \sum_i A_i a_i^2 (a_i^2 - 1)^2,
\end{equation*}
which implies 
$a_i \in \{0, 1\}$.
This is a contradiction to Bajnok's theorem (see Theorem~\ref{thm:Bajnok}).
\end{proof}

\begin{lemma}\label{lem:Harm81}
Let $n \ge 8$.
Suppose that there exists a weighted $t$-design of type (\ref{eq:invariant2}) with $s_i \in \{0,1,2\}$ for all $i$. Then it holds that $t \le 7$.
\end{lemma}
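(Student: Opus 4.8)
The plan is to show that a design of type~\eqref{eq:invariant2} of degree $t\ge 8$ with all $s_i\in\{0,1,2\}$ cannot exist, which forces $t\le 7$. First I would reduce the defining property to finitely many scalar conditions. Since every orbit $v_{a_i,s_i}^{B_n}$ is centrally symmetric, the odd-degree invariants integrate to zero automatically, and by Corollary~\ref{cor:dimension4-8} together with the vanishing $\dim\Harm_2(\mathbb{S}^{n-1})^{B_n}=0$ read off from Proposition~\ref{prop:Molien}, being an $8$-design is, via Sobolev's theorem (Theorem~\ref{thm:Sobolev}) and Proposition~\ref{prop:decomposition1}, equivalent to integrating $f_4,f_6,f_{8,1},f_{8,2}$ to zero. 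Writing $\mu_i=W_i\,|v_{a_i,s_i}^{B_n}|>0$ and using $f_\bullet(v_{a,s})=(a^2+s)^{-d_\bullet}\tilde f_\bullet(v_{a,s})$ from Lemma~\ref{lem:vas-FullySymmetricDesign} (with $d_4=2$, $d_6=3$, $d_{8,1}=d_{8,2}=4$), these four conditions read
\begin{equation*}
\sum_{i=1}^k \mu_i\,\frac{\tilde f_\bullet(v_{a_i,s_i})}{(a_i^2+s_i)^{d_\bullet}}=0,\qquad \bullet\in\{4,6,(8,1),(8,2)\}.
\end{equation*}
I would then record that an $s_i=0$ orbit equals $v_1^{B_n}$ independently of $a_i$, so the only orbits that can fail to be corner vectors are those with $s_i\in\{1,2\}$ and $a_i\neq 1$, and that the values $\tilde f_\bullet(v_{a,s})$ simplify drastically on $s\in\{0,1,2\}$ (for instance $\tilde f_{8,2}(v_{a,0})=0$ and $\tilde f_{8,2}(v_{a,1})=a^4$).

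The core step is to assemble a single nonnegative certificate from these four equations. Concretely, I would seek constants $\alpha,\beta,\gamma,\delta$, independent of $i$, such that the rational function
\begin{equation*}
G(a,s):=\alpha\,\frac{\tilde f_4}{(a^2+s)^2}+\beta\,\frac{\tilde f_6}{(a^2+s)^3}+\gamma\,\frac{\tilde f_{8,1}}{(a^2+s)^4}+\delta\,\frac{\tilde f_{8,2}}{(a^2+s)^4}
\end{equation*}
satisfies $G(a,s)\ge 0$ for every $a>0$ and $s\in\{0,1,2\}$, with equality precisely at the corner vectors. Requiring $G$ to vanish at $v_1,v_2,v_3$ imposes three linear conditions on $(\alpha,\beta,\gamma,\delta)$ — in particular $G(a,0)\equiv\alpha+\beta+\gamma$ forces $\alpha+\beta+\gamma=0$, since $\tilde f_{8,2}(v_{a,0})=0$ while the other three reduced values equal $1$ at $v_1$ — leaving a one-parameter family, within which I would tune the free parameter so that the two univariate rational functions $a\mapsto G(a,1)$ and $a\mapsto G(a,2)$ are nonnegative with a double zero at $a=1$. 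Equivalently, the aim is to manufacture a positive-coefficient identity $\sum_i\mu_i\,\kappa_i\,a_i^2(a_i^2-1)^2=0$ with $\kappa_i>0$, exactly mirroring the endgame of Lemma~\ref{lem:CRC}.

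Granting such a $G$, the conclusion is immediate: in $\sum_i\mu_i\,G(a_i,s_i)=0$ every summand is nonnegative and $\mu_i>0$, so $G(a_i,s_i)=0$ for all $i$, which forces $a_i^2(a_i^2-1)^2=0$ and hence $a_i=1$ (as $a_i>0$). Thus every orbit is a corner vector, so the design is a $B_n$-invariant corner-vector $t$-design with $t\ge 8$, contradicting Bajnok's theorem (Theorem~\ref{thm:Bajnok}).

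The hard part will be the sign analysis producing $G$, that is, verifying the one free parameter can be chosen so that $G(\cdot,1)\ge 0$ and $G(\cdot,2)\ge 0$ hold simultaneously; I would carry this out by clearing the denominators $(a^2+s)^4$ and checking nonnegativity of the resulting degree-$8$ (in $a$) numerators on $a>0$, treating $s=1$ and $s=2$ separately. This is exactly where the hypothesis $n\ge 8$ is essential: the coefficients of $G$ carry the factors $1/(n-1)$ and $1/(n-2)$ inherited from $\tilde f_6,\tilde f_{8,1},\tilde f_{8,2}$, and the requisite positivity of the leading and constant data (equivalently, of the $\kappa_i$) only holds once $n$ is large enough. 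For the residual small dimensions $4\le n\le 7$ one would instead invoke Xu's characterization (Theorem~\ref{thm:simplex}), the third ingredient in the proof of Theorem~\ref{thm:main_bound}.
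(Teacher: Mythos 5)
Your reduction of the degree-$\ge 8$ design property to the four scalar conditions $\sum_i \mu_i f_\bullet(v_{a_i,s_i})=0$, $\bullet\in\{4,6,(8,1),(8,2)\}$, is correct, but the heart of your argument is missing: the existence of constants $\alpha,\beta,\gamma,\delta$ for which $G(\cdot,1)$ and $G(\cdot,2)$ are simultaneously nonnegative with double zeros at $a=1$ is only announced (``I would tune the free parameter so that\dots'', ``the hard part will be the sign analysis''), never carried out. That sign analysis \emph{is} the proof; without it you have a strategy, not an argument, and it is not clear a priori that your one-parameter family (further constrained by forcing $G$ to vanish at the corner vectors) even contains such a certificate. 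This is a genuine gap. A secondary slip: the identity $f_\bullet(v_{a,s})=(a^2+s)^{-d_\bullet}\tilde f_\bullet(v_{a,s})$ is false for $\tilde f_{8,2}$ as tabulated in Lemma~\ref{lem:vas-FullySymmetricDesign} — that expression is the true value of $(a^2+s)^4 f_{8,2}(v_{a,s})$ with a factor of $s$ removed, which is exactly why it does not vanish at $s=0$ even though $f_{8,2}(v_{a,0})=0$; any certificate computation mixing the two conventions would come out wrong.

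The paper's own proof shows that your search is also over-engineered: a strictly positive certificate already exists with $\alpha=\beta=\delta=0$, i.e.\ the single invariant $f_{8,1}$ suffices, and then neither the vanishing-at-corner-vectors constraint nor Bajnok's theorem (Theorem~\ref{thm:Bajnok}) is needed. Concretely, degree $\ge 8$ forces $\sum_i \tilde W_i A_{a_i,s_i}=0$ with $\tilde W_i>0$ and $A_{a,s} = a^8+s+7(-4a^6+10a^4-4a^2+s-1)\frac{s}{n-1}$; if some $A_{a_j,s_j}\le 0$, then $s_j\neq 0$ and elementary calculus gives $n-1 \le \frac{28a_j^6-70a_j^4+28a_j^2}{a_j^8+1} < 4$ when $s_j=1$, and $n-1 \le \frac{56a_j^6-140a_j^4+56a_j^2-14}{a_j^8+2} < 7$ when $s_j=2$, contradicting $n\ge 8$. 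So every $A_{a_i,s_i}$ is strictly positive (even at $a_i=1$) and the sum cannot vanish — an immediate contradiction, with the hypothesis $n\ge 8$ entering precisely through the bound $<7$ in the $s=2$ case. Finally, your closing remark about dimensions $4\le n\le 7$ is out of place: the lemma assumes $n\ge 8$, and those dimensions are handled by Lemma~\ref{lem:simplex_proof} inside the proof of Theorem~\ref{thm:main_bound}, not inside this lemma.
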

\begin{proof}
By combining (\ref{eq:invariant2}) with Lemma~\ref{lem:vas-FullySymmetricDesign}
for $\tilde{f}_{8,1}$, we have
\begin{equation*}
0 = \sum_i \tilde{W}_i \Big(a_i^8 + s_i + 7(-4a_i^6 + 10a_i^4 - 4a_i^2 + s_i -1) \frac{s_i}{n-1} \Big), 
\end{equation*}
where $\tilde{W}_i = W_i |v_{a_i, s_i}^{B_n}| > 0$.
For $i = 1,\ldots,k$, let
\[
A_{a_i,s_i} = a_i^8 + s_i + 7(-4a_i^6 + 10a_i^4 - 4a_i^2 + s_i -1) \frac{s_i}{n-1}.
\]
Since $\tilde{W}_i > 0$
for every $i$, there exists $j$ such that
$A_{a_j,s_j} < 0$.
Clearly $s_j \ne 0$.
It thus follows from elementary calculus that
\[
n-1 <
\left\{ \begin{array}{ll}
\displaystyle \frac{28a_j^6-70a_j^4+28a_j^2}{a_j^8+1} < 4 & \ \text{ if $s_j = 1$}; \\
\displaystyle \frac{56a_j^6-140a_j^4+56a_j^2-14}{a_j^8+2} < 7 & \ \text{ if $s_j = 2$}.
\end{array} \right.
\]
\end{proof}

\begin{lemma}\label{lem:simplex_proof}
Let $n \ge 3$.
Suppose that there exists a weighted $t$-design of type (\ref{eq:invariant2}) with $s_i \le n-2$ for all $i$. Then it holds that $t \le 2n-1$.
\end{lemma}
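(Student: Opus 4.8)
The plan is to transport the design to the standard simplex $T^{n-1}$ through Xu's correspondence (Theorem~\ref{thm:simplex}) and there exploit the fact that the hypothesis $s_i \le n-2$ confines all nodes to the boundary $\partial T^{n-1}$. Indeed, a generalized corner vector $v_{a_i,s_i}$ with $s_i \le n-2$ has exactly $1+s_i \le n-1$ nonzero entries, hence at least one vanishing coordinate; since the $B_n$-action by permutations and sign changes preserves the number of zero coordinates, \emph{every} point of the orbit $v_{a_i,s_i}^{B_n}$ has a zero coordinate. Under the squaring map $\psi(x_1,\ldots,x_n)=(x_1^2,\ldots,x_{n-1}^2)$ of Remark~\ref{rem:GCV1}, such a point is sent to a $y\in T^{n-1}$ one of whose barycentric coordinates $y_1,\ldots,y_{n-1},1-|y|_1$ vanishes, i.e.\ to a point of $\partial T^{n-1}$.

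Arguing by contradiction, I would suppose $t\ge 2n$. Since designs of type~(\ref{eq:invariant2}) are centrally symmetric, a $2n$-design is automatically exact on all odd-degree polynomials as well, hence is a $(2n+1)$-design. Applying the converse direction of Theorem~\ref{thm:simplex} with $2t+1=2n+1$, I obtain a cubature of degree $n$ on $T^{n-1}$ for the weight $W$, with positive weights inherited from the $W_i$, whose nodes are precisely the $\psi$-images of the orbit points and therefore all lie on $\partial T^{n-1}$. Now consider the boundary-vanishing polynomial
\begin{equation*}
g(y)=\Big(\prod_{i=1}^{n-1}y_i\Big)(1-|y|_1)\in\mathcal{P}_n(T^{n-1}),
\end{equation*}
which vanishes identically on $\partial T^{n-1}$ and is strictly positive on the interior. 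As the cubature has degree $n$, it integrates $g$ exactly; but $g$ vanishes at every node, so the node sum is $0$, while
\begin{equation*}
\int_{T^{n-1}} g(y)W(y)\,dy=\int_{T^{n-1}}\sqrt{\Big(\prod_{i=1}^{n-1}y_i\Big)(1-|y|_1)}\;dy>0,
\end{equation*}
a contradiction. Hence $t\le 2n-1$.

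I expect the delicate point to be the degree bookkeeping between the sphere and the simplex: one must invoke the central symmetry of~(\ref{eq:invariant2}) to upgrade a would-be $2n$-design to a $(2n+1)$-design, so that Xu's parity condition applies cleanly and the bound is the sharp $2n-1$ rather than $2n$. The two supporting verifications --- that the projected nodes genuinely lie on $\partial T^{n-1}$ (which rests on the zero-coordinate count above) and that $\int_{T^{n-1}} gW\,dy>0$ (so that a positive integral cannot be reproduced by a node sum forced to vanish) --- are routine once the geometric picture of Remark~\ref{rem:GCV1} is in hand, and the remainder of the argument is formal.
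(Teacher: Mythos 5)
Your proof is correct and follows essentially the same route as the paper's: transport the design to $T^{n-1}$ via Xu's correspondence, observe that $s_i \le n-2$ forces every node onto the boundary of $T^{n-1}$, and test against the boundary-vanishing degree-$n$ polynomial $\bigl(\prod_{i=1}^{n-1} y_i\bigr)(1-|y|_1)$. Your explicit invocation of central symmetry to upgrade a putative $2n$-design to a $(2n+1)$-design is left implicit in the paper (it records that observation just after the statement of Theorem~\ref{thm:main_bound}), but the argument is otherwise identical.
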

\begin{proof}
Let $n \ge 3$, and suppose that there exists a weighted $(2n+1)$-design of type (\ref{eq:invariant2}) with $s_i \le n-2$ for all $i$.
As in Remark~\ref{rem:GCV1}, we consider the map
\[
\psi: \mathbb{S}^{n-1} \rightarrow T^{n-1}, \quad \psi(x_1,\ldots,x_{n-1},x_n) = (x_1^2,\ldots,x_{n-1}^2).
\]
Then by Theorem~\ref{thm:simplex}, the set
\[
S = \Big\{ \psi(x) \in T^{n-1} \mid x \in \bigcup_{i=1}^k v_{a_i,s_i}^{B_n} \Big\}
\]
is a weighted simplicial 
design
of type
\begin{equation*}
\sum_{s \in S} c_s f(z_s)
=
\frac{1}{ \int_{ T^{n-1} } W(y) dy_1 \cdots dy_{n-1} } \int_{ T^{n-1} } f(y) W(y) dy_1\cdots dy_{n-1} \ \text{for every $f \in \mathcal{P}_n(T^{n-1})$}.
\end{equation*}

Let $x = (x_1,\ldots,x_n) \in \bigcup_{i=1}^k v_{a_i,s_i}^{B_n}$.
Since $s_i \le n-2$ for all $i$, there exists at least one nonzero coordinate of $x$.
If $x_n = 0$, then $\psi(x)$ is included in the affine hyperplane $\pi_0: x_1+\cdots+x_n=1$.
If $x_i = 0$ where $i=1,\ldots,n-1$, then $\psi(x)$ is included in the hyperplane $\pi_i: x_i = 0$.
Then the polynomial
\[
f(x_1,\ldots,x_n) = \Big( \prod_{i=1}^{n-1} x_i \Big) \Big(1 - \sum_{j=1}^n x_j \Big)
\]
has degree $n$, which vanishes at the boundary of $T^{n-1}$ but takes positive values at the interior of $T^{n-1}$.
Hence it follows that
\[
0
< \frac{1}{ \int_{ T^{n-1} } W(y) dy_1\cdots dy_{n-1} } \int_{ T^{n-1} } f(y) W(y) dy_1\cdots dy_{n-1} 
= \sum_{s \in S} c_s f(z_s)
= 0,
\]
which is a contradiction.
\end{proof}

We are now in a position to complete the proof of Theorem~\ref{thm:main_bound}

\begin{proof}[Proof of Theorem~\ref{thm:main_bound}]
The result is proved by Lemma~\ref{lem:CRC} for 
$s_i \geq 3$
for some $i$, and by Lemmas~\ref{lem:Harm81},~\ref{lem:simplex_proof} for 
$s_i < 3$ 
for all $i$.~(see Table~\ref{tbl:upper bound})
\end{proof}

\begin{table}[htb]
\begin{center}
\caption{Upper bound for the maximum degree $t_n$ of 
weighted designs on $\mathbb{S}^{n-1}$ with generalized corner vectors}
\label{tbl:upper bound}
\begin{tabular}{ccccccc}
\hline
$n$                  & 4          & 5 & 6 & 7 & 8 & 9 \\ \hline
\makecell[l]{$\exists i \; (s_i\geq 3)$\\ Lemma~\ref{lem:CRC}}      
& $\boldsymbol{t_n} \mathbf{\leq 15}$ & $\boldsymbol{t_n} \mathbf{\leq 15}$ & $\boldsymbol{t_n} \mathbf{\leq 15}$  & $\boldsymbol{t_n} \mathbf{\leq 15}$  & $\boldsymbol{t_n} \mathbf{\leq 15}$  & $\boldsymbol{t_n} \mathbf{\leq 15}$  \\ \hline
\makecell[l]{$\forall i \; (s_i=0,1,2)$ \\Lemma~\ref{lem:Harm81}}
& \makecell[c]{-} & \makecell[c]{-} & \makecell[c]{-} & \makecell[c]{-}  & $\boldsymbol{t_n} \mathbf{\leq 7}$  &  $\boldsymbol{t_n} \mathbf{\leq 7}$   \\ \hline
\makecell[l]{$\forall i \; (s_i=0,1,2)$ \\Lemma~\ref{lem:simplex_proof}}
 & $\boldsymbol{t_n} \mathbf{\leq 7}$ & $\boldsymbol{t_n} \mathbf{\leq 9}$ & $\boldsymbol{t_n} \mathbf{\leq 11}$ & $\boldsymbol{t_n} \mathbf{\leq 13}$  & $t_n\leq15$  &  $t_n\leq17$    \\ \hline
\end{tabular}
\end{center}
\end{table}

\section{Designs with a single proper orbit}\label{sect:single}
Throughout this section we assume $n \ge 3$ and consider a 
design
of type
\begin{equation}
   \label{eq:invariant1orbit}
   \frac{1}{|\mathbb{S}^{n-1}|} \int_{\mathbb{S}^{n-1}} f(y) d\rho
   =
   \frac{1}{|v_{a,s}^{B_n}|} \sum_{x \in v_{a,s}^{B_n}} f(x)\quad
   \textit{for every } 
   f \in \mathcal{P}_t(\mathbb{S}^{n-1}).
\end{equation}

\begin{proposition}\label{prop:Tanino1}
      Let $n \geq 3$.
      A $7$-design of type (\ref{eq:invariant1orbit}) exists if and only if,
          \begin{gather}
              a^2 = \frac{3s\pm\sqrt{(2+n)s(1-n+3s)}}{n-1},\label{eq:singleorbit-1}\\
            h_{\pm}(n, s) = n^3 + (2-9s)n^2+(-7-9s+12s^2)n+6s^2+18s+4 \notag \\
            \qquad \qquad \qquad \pm(n^2-3(-2+s)n-3s-7)\sqrt{(2+n)s(1-n+3s)} = 0. 
            \label{eq:singleorbit-2}
          \end{gather}
\end{proposition}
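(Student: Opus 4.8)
The plan is to reduce the design condition to a small, finite system of polynomial equations via invariant theory, and then to solve that system explicitly. Since the formula (\ref{eq:invariant1orbit}) is equi-weighted over a single orbit, Theorem~\ref{thm:Sobolev} tells us that it is a $7$-design if and only if it integrates correctly every $B_n$-invariant polynomial in $\mathcal{P}_7(\mathbb{S}^{n-1})^{B_n}$. By Proposition~\ref{prop:decomposition1} this space decomposes into invariant $h$-harmonics of degrees $0$ through $7$, and by Proposition~\ref{prop:Molien} together with Corollary~\ref{cor:dimension4-8} the only nonzero invariant pieces of positive even degree at most $7$ are $\Harm_4(\mathbb{S}^{n-1})^{B_n}$ and $\Harm_6(\mathbb{S}^{n-1})^{B_n}$, each one-dimensional and spanned by $f_4$ and $f_6$ from Lemma~\ref{lem:basisofHarm}; the odd-degree invariants are killed automatically because $v_{a,s}^{B_n}$ is centrally symmetric, and there is no invariant harmonic of degree $2$. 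Because a harmonic of positive degree integrates to zero and $f_4, f_6$ are constant on the orbit, the $7$-design property is therefore equivalent to the two scalar equations $f_4(v_{a,s}) = 0$ and $f_6(v_{a,s}) = 0$, i.e. $\tilde{f}_4(v_{a,s}) = 0$ and $\tilde{f}_6(v_{a,s}) = 0$ in the normalisation of Lemma~\ref{lem:vas-FullySymmetricDesign}.

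Next I would solve the degree-$4$ equation. The expression for $\tilde{f}_4(v_{a,s})$ in Lemma~\ref{lem:vas-FullySymmetricDesign} is a quadratic in the single variable $a^2$, namely
\begin{equation*}
a^4 - \frac{6s}{n-1}a^2 + s - \frac{3s(s-1)}{n-1} = 0.
\end{equation*}
Applying the quadratic formula and simplifying the discriminant (whose numerator factors as $s(n+2)(3s-n+1)$ once the denominator $(n-1)^2$ is cleared) yields exactly the value of $a^2$ claimed in (\ref{eq:singleorbit-1}), with the two roots matching the two choices of sign.

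It then remains to impose $\tilde{f}_6(v_{a,s}) = 0$. Here the strategy is to reduce the degree-$6$ expression modulo the relation supplied by $\tilde{f}_4 = 0$: using $a^4 = \frac{6s}{n-1}a^2 - s + \frac{3s(s-1)}{n-1}$ to rewrite $a^6$ and $a^4$, the quantity $\tilde{f}_6(v_{a,s})$ collapses to an affine function of $a^2$. Substituting the explicit value of $a^2$ from (\ref{eq:singleorbit-1}) then produces an expression of the shape (rational part) $\pm$ (coefficient)$\sqrt{(n+2)s(3s-n+1)}$, and clearing denominators identifies this, up to a nonzero factor, with $h_\pm(n,s)$; the branch sign in $h_\pm$ is inherited from the branch chosen for $a^2$. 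For the converse direction, if both (\ref{eq:singleorbit-1}) and (\ref{eq:singleorbit-2}) hold then $\tilde{f}_4 = \tilde{f}_6 = 0$, so $f_4$ and $f_6$ are integrated correctly and Theorem~\ref{thm:Sobolev} delivers the $7$-design. The main obstacle is purely computational: substituting the irrational value of $a^2$ into the degree-$6$ relation and simplifying the result to the compact polynomial $h_\pm$, which is most safely handled with a computer algebra system, taking care to track the two sign branches consistently throughout.
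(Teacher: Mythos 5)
Your proposal is correct and follows essentially the same route as the paper's proof: reduce the $7$-design condition via Sobolev's theorem and the one-dimensionality of $\Harm_4(\mathbb{S}^{n-1})^{B_n}$ and $\Harm_6(\mathbb{S}^{n-1})^{B_n}$ to the pair of scalar equations $\tilde{f}_4(v_{a,s})=\tilde{f}_6(v_{a,s})=0$, solve the quadratic in $a^2$ to get (\ref{eq:singleorbit-1}), and substitute into $\tilde{f}_6=0$ to get (\ref{eq:singleorbit-2}). Your extra details (the Molien-series justification, factoring the discriminant as $4(n+2)s(1-n+3s)$, and reducing $\tilde{f}_6$ modulo the quadratic relation before substituting) are sound refinements of the same argument rather than a different approach.
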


\begin{proof}
    By Theorem~\ref{thm:Sobolev}, Corollary~\ref{cor:dimension4-8} and Lemma~\ref{lem:vas-FullySymmetricDesign},
    a $7$-design of type (\ref{eq:invariant1orbit}) exists if and only if 
    $\tilde{f}_{4}(v_{a,s}) = \tilde{f}_{6}(v_{a,s})=0$.

    Solving $\tilde{f}_{4}(v_{a,s})=0$ for $a^2$, we obtain (\ref{eq:singleorbit-1}).
    Substituting (\ref{eq:singleorbit-1}) into $\tilde{f}_{6}(v_{a,s})=0$, we also obtain (\ref{eq:singleorbit-2}).
\end{proof}

\begin{remark}\label{rem:Tanino}
    When $s=0$ in (\ref{eq:singleorbit-2}), we have
    \begin{equation*}
        0=n^3+2n^2-7n+4 = (n - 1)^2 (n + 4),
    \end{equation*}
    whose roots are $1$ and $-4$, contradicting the assumption that $n\geq3$.
\end{remark}

\begin{remark}\label{cor:Tanino}
    Let $n \geq 3$.
    If a weighted $7$-design of type (\ref{eq:invariant1orbit}) exists, then 
    $n^2-3(-2+s)n-3s - 7=0$
    or
    $(2+n)s(1-n+3s)$ is a square number.
    But 
    $n^2-3(-2+s)n-3s - 7=0$
    does not hold.
Suppose contrary.
Since $n$ and $s$ are integers, the discriminant of 
    $n^2-3(-2+s)n-3s - 7=0$
is a square, that is, there exists an integer $m$ such that
\begin{equation}\label{eq:singleorbit-4}
    9s^2-24s+64=m^2.
\end{equation}
Then, the solutions of (\ref{eq:singleorbit-4}) are $(s,m)=(0,\pm8),(1,\pm7),(5,\pm13)$, 
for which $n^2-3(-2+s)n-3s - 7=0$ has no integer solutions.
\end{remark}

\begin{theorem}
   \label{thm:main2}
   There exist only
   finitely many pairs $(n,s)$ that satisfy the condition (\ref{eq:singleorbit-2}).
\end{theorem}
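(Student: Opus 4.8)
The plan is to clear the square root in~(\ref{eq:singleorbit-2}) and thereby reduce the statement to the finiteness of integral points on a single affine plane curve, after which Siegel's theorem on integral points finishes the job. First I would name the two polynomials appearing in~(\ref{eq:singleorbit-2}),
\[
P(n,s)=n^{3}+(2-9s)n^{2}+(-7-9s+12s^{2})n+6s^{2}+18s+4,\qquad Q(n,s)=n^{2}-3(s-2)n-3s-7,
\]
and write $D(n,s)=(2+n)s(1-n+3s)$ for the radicand, so that (\ref{eq:singleorbit-2}) reads $h_{\pm}=P\pm Q\sqrt{D}=0$. Multiplying the two conjugate expressions produces the genuine polynomial
\[
F(n,s):=h_{+}h_{-}=P(n,s)^{2}-Q(n,s)^{2}D(n,s),
\]
of total degree $7$; equivalently, $F$ is (up to a power of $n-1$) the resultant in $u=a^{2}$ of the quadratic $\tilde f_{4}$ and the cubic $\tilde f_{6}$ of Lemma~\ref{lem:vas-FullySymmetricDesign}. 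Since any integral pair $(n,s)$ satisfying~(\ref{eq:singleorbit-2}) forces $h_{+}=0$ or $h_{-}=0$, and hence $F(n,s)=0$, it suffices to prove that the affine curve $C\colon F=0$ has only finitely many integral points.

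Next I would record the behaviour of $C$ at infinity, which is the source of finiteness. A short computation of the top-degree parts (the square $P^{2}$ contributes only in degree $6$, whereas $Q^{2}D$ contributes in degree $7$) shows that the degree-$7$ homogeneous part of $F$ equals $n^{3}\,s\,(n-3s)^{3}$. Hence the projective closure of $C$ meets the line at infinity in exactly the three distinct points corresponding to the directions $n=0$, $s=0$ and $n=3s$. In particular the normalization of $C$ has at least three places $r\ (\ge 3)$ lying over the line at infinity, so that $2g-2+r\ge 2g+1\ge 1>0$, where $g\ge 0$ is the genus. By Siegel's theorem, an irreducible affine curve with $2g-2+r>0$ carries only finitely many integral points, and applying this to $C$ yields the claim.

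The one point that must be settled to make this rigorous is the \emph{irreducibility} of $F$, and this is where I expect the main difficulty to lie. If $C$ is irreducible, the argument above is complete. If $F$ factors, then the leading form of each irreducible factor must divide $n^{3}s(n-3s)^{3}$, so every component has its points at infinity among the three directions above; I would then rule out the only dangerous possibilities, namely a linear factor (genus $0$ with a single place at infinity) or a Pell-type conic factor (genus $0$ with exactly two places). A linear factor would have to be of the form $n=c$, $s=c$ or $n-3s=c$, and substituting each of these into $F$ gives a one-variable polynomial that is not identically zero; for instance $F(3s+1,s)=\bigl(18s^{2}(s+1)\bigr)^{2}\ne 0$, and the analogous substitutions on the other two families are equally easy, so no line lies on $C$.

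Establishing the irreducibility of the degree-$7$ polynomial $F$ (equivalently, that $D$ is not a square in the function field after accounting for its squarefree factor structure), together with a correct accounting of the possibly singular branches of $C$ at the three points at infinity, is the genuine obstacle; I would most conveniently confirm irreducibility and the genus by a computer-algebra computation, since everything else in the argument is elementary. A worthwhile sanity check during the write-up is that the two sporadic solutions $(n,s)=(16,8)$ and $(23,11)$ indeed satisfy $F=0$ and lie on the single real branch $n\approx 3s$ predicted by the leading form, consistent with these being among the finitely many integral points of $C$.
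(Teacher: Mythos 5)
Your overall strategy is the same as the paper's: clear the square root by forming $F=h_+h_-=P^2-Q^2D$ and invoke Siegel's theorem on the resulting plane curve. However, your component analysis contains a concrete error that breaks the proof. You claim that ``no line lies on $C$,'' but $F$ does have linear factors: in fact
\[
F(n,s)=(n-1)^2(1+s)\,f(n,s),
\]
where $f$ is a quartic (this is exactly the factorization in the paper's proof, equation (\ref{eq:curve1})). One checks directly that $F(1,s)\equiv 0$ (since $P(1,s)=18s^2$, $Q(1,s)=-6s$, $D(1,s)=9s^2$) and $F(n,-1)\equiv 0$ (since $P(n,-1)=Q(n,-1)\cdot(n+2)$ and $D(n,-1)=(n+2)^2$). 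Your substitution test was carried out only for the family $n-3s=c$ (where indeed $F(3s+1,s)=\bigl(18s^2(s+1)\bigr)^2\not\equiv 0$), and the ``equally easy'' other two families were precisely where the factors sit. This matters because the lines $n=1$ and $s=-1$ carry infinitely many integral points of $F=0$ (indeed $(1,s)$ satisfies $h_+=0$ for all $s\ge 0$), so Siegel's theorem cannot be applied to $C$ as a whole; these components must be discarded using the standing constraints $n\ge 3$, $s\ge 0$, which is what the paper does.

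There is a second, downstream problem: after the linear components are removed, your count of places at infinity collapses. The leading form of the residual quartic $f$ is $n(n-3s)^3$, not $n^3s(n-3s)^3$, so $f=0$ meets the line at infinity in only \emph{two} points, and the criterion $2g-2+r>0$ can no longer be verified by counting points at infinity alone; you genuinely need the genus (or the branch structure at the singular point at infinity in direction $n=3s$). The paper settles this by locating the affine singular points of $f$, namely $(2,1)$ and $(1,0)$, showing they are ordinary double points, and concluding that the quartic has genus one, after which Siegel's theorem applies. So your proposal needs two repairs to be correct: replace the false ``no lines'' claim by the explicit factorization together with the domain restriction $n\ge3$, $s\ge0$, and replace the points-at-infinity count by a genus computation for the quartic factor.
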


\begin{proof}
Substituting $n = x$, $s = y$ into (\ref{eq:singleorbit-2}), we have
\begin{align*}
h_+(x, y) h_-(x, y) &=\{ n^3 + (2 - 9 s) n^2 + (-7 - 9 s + 12 s^2) n + 6 s^2 + 18 s + 4 \}^2 \\
& \qquad-\{ n^2 - 3 (-2 + s) n - 3 s - 7) \sqrt{(n + 2) s (1 - n + 3 s)}\}^2 \\
&= (-1 + x)^2 (1 + y) \big \{ x^4 + (6-9y)x^3 + (27y^2-30y+1)x^2 \\
& \qquad - (27y^3-54y^2-9y+24)x - 18y^3-36y^2+30y+16 \big \} = 0.
\end{align*}
By noting $x\geq 3$ and $y\geq 0$, we obtain
   \begin{equation}
      \label{eq:curve1}
      \begin{gathered}
         f(x,y) := x^4 + (6-9y)x^3 + (27y^2-30y+1)x^2 - (27y^3-54y^2-9y+24)x \\
         \qquad - 18y^3-36y^2+30y+16 = 0.
      \end{gathered}
   \end{equation}
   Suppose $f_y(x,y) = 0$. Since
   \begin{eqnarray*}
      f_y(x,y)
      &=& -9 x^3 + 6 x^2 (9 y - 5) + x (-81 y^2 + 108 y + 9) - 54 y^2 - 72 y + 30 \\
      &=& -3 (x - 3 y + 1) (3 x^2 - 9 x y + 7 x - 6 y - 10),
   \end{eqnarray*}
   we have
   \begin{equation}
      \label{eq:partial1}
      y=\frac{x+1}{3} \quad \text{ or } \quad y = \frac{-10 + 7 x + 3 x^2}{3 (2 + 3 x)} \ \text{ and } \ x \ne -\frac{2}{3}.
   \end{equation}
   Substituting (\ref{eq:partial1}) into
   \[
      f_x(x,y) = 4 x^3 - 9 x^2 (3 y - 2) + x (54 y^2 - 60 y + 2) - 27 y^3 + 54 y^2 + 9 y - 24 = 0,
   \]
   we obtain $(x,y) = (2,1)$ or $(1,0)$.
   Since $f(2,1) =  0$ and $f(1,0) = 0$, these are the only singular points of the curve $C$ defined by (\ref{eq:curve1}).
   It is not entirely obvious but shown that $(2,1)$ or $(1,0)$ are ordinary multiple points of multiplicity $2$, and therefore $C$ has genus one. By Siegel's theorem~\cite{Siegel1929}, there exist finitely many pairs $(n,s)$ for which (\ref{eq:singleorbit-2}) holds.
\end{proof}

\begin{remark}\label{thm:Tanino}
     By using Mathematica, we find that a weighted $7$-design of type (\ref{eq:invariant1orbit}) exists if $(n,a,s)=(16,2,8),(23,2,11)$.
\end{remark}

In Section~\ref{sect:conclusion}, we characterize 
$v_{2,8}^{B_{16}}$ and $v_{2,11}^{B_{23}}$
in terms of shells of integral lattices.

\bigskip

Contrary to the situation of $7$-designs, there do not exist $9$-designs, as shown in the following result.

\begin{proposition}\label{prop:single9design}
    Let $n \geq 3$.
    A weighted $9$-design of type (\ref{eq:invariant1orbit}) does not exist.
\end{proposition}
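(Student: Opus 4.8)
The plan is to proceed exactly as in the proof of Proposition~\ref{prop:Tanino1}, now pushing the design condition up to degree $9$. By Theorem~\ref{thm:Sobolev}, Corollary~\ref{cor:dimension4-8} and Lemma~\ref{lem:vas-FullySymmetricDesign}, a $9$-design of type~(\ref{eq:invariant1orbit}) exists if and only if the single orbit $v_{a,s}^{B_n}$ integrates correctly against the invariant $h$-harmonics of degrees $4$, $6$ and $8$. Since $\dim \Harm_8(\mathbb{S}^{n-1})^{B_n}$ equals $1$ for $n=3$ and $2$ for $n\ge 4$ (Corollary~\ref{cor:dimension4-8}), the defining system is
\[
\tilde{f}_4(v_{a,s}) = \tilde{f}_6(v_{a,s}) = \tilde{f}_{8,1}(v_{a,s}) = 0 \quad (n=3),
\]
together with the additional equation $\tilde{f}_{8,2}(v_{a,s}) = 0$ when $n \ge 4$. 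Thus the existence of a $9$-design forces an overdetermined system: three polynomial constraints in the two unknowns $a^2$ and $s$ (with $n$ a parameter) for $n=3$, and four constraints for $n\ge 4$. The strategy is to show this system is inconsistent for every admissible $(n,s)$ with $n\ge 3$, $0\le s\le n-1$, $a>0$.

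First I would treat the case $n\ge 4$. Here one already knows from Proposition~\ref{prop:Tanino1} that the degree-$4$ and degree-$6$ conditions pin $a^2$ and $s$ to the locus described by~(\ref{eq:singleorbit-1}) and~(\ref{eq:singleorbit-2}). Adjoining the two extra equations $\tilde{f}_{8,1}=0$ and $\tilde{f}_{8,2}=0$, I would eliminate $a^2$ using~(\ref{eq:singleorbit-1}) and substitute into each of the degree-$8$ relations, obtaining (after clearing the radical $\sqrt{(2+n)s(1-n+3s)}$, exactly as in Theorem~\ref{thm:main2}) two further polynomial conditions in $n$ and $s$ that must hold simultaneously with~(\ref{eq:singleorbit-2}). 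A resultant or gcd computation in $s$ (or $n$) should reveal that the common zero set is empty over the integers with $n\ge 4$, $0 \le s \le n-1$; the degree-$8$ conditions overconstrain the one-dimensional curve already cut out by the degree-$6$ condition.

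For $n=3$ only three equations are available, but the bound $0 \le s \le n-1 = 2$ means $s \in \{0,1,2\}$ takes just three values, so I would simply substitute each value of $s$ into~(\ref{eq:singleorbit-1}) to solve for $a^2$ explicitly with $n=3$, then check directly that $\tilde{f}_{8,1}(v_{a,s})\ne 0$ in every case (the case $s=0$ being already excluded by Remark~\ref{rem:Tanino}). Each check reduces to verifying that an explicit algebraic number is nonzero, which is routine.

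The main obstacle will be the elimination step for $n\ge 4$: the polynomials $\tilde{f}_{8,1}$ and $\tilde{f}_{8,2}$ in Lemma~\ref{lem:vas-FullySymmetricDesign} carry denominators $(n-1)$, $(n-2)$, $(n-3)$ and high-degree dependence on $a^2$, so after substituting~(\ref{eq:singleorbit-1}) and rationalizing, the resulting conditions are bulky bivariate polynomials in $n$ and $s$. Establishing rigorously that they share no common integer solution with~(\ref{eq:singleorbit-2}) in the admissible range is the delicate part; I expect this to require a computer algebra computation of resultants (as was invoked implicitly in Remark~\ref{thm:Tanino}), together with an argument ruling out the finitely many candidate pairs that survive. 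Once the system is shown to be inconsistent, nonexistence of the $9$-design follows immediately.
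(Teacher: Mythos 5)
Your overall strategy---reduce existence of a $9$-design of type (\ref{eq:invariant1orbit}) to the vanishing of $\tilde f_4,\tilde f_6,\tilde f_{8,1}$ (and $\tilde f_{8,2}$ for $n\ge 4$) on the orbit, then show the resulting polynomial system has no admissible solution by elimination---is the same kind of argument the paper uses. However, your $n=3$ verification contains a step that genuinely fails. For $n=3$, $s=1$, writing $u=a^2$, the condition $\tilde f_4(v_{a,1})=0$ reads $u^2-3u+1=0$, while $\tilde f_{8,1}(v_{a,1})=u^4-14u^3+35u^2-14u+1=(u^2-3u+1)(u^2-11u+1)$. Hence $\tilde f_{8,1}$ vanishes \emph{automatically} at both roots $u=(3\pm\sqrt5)/2$: the ``explicit algebraic number'' you propose to check is in fact zero, and no contradiction arises from the degree-$8$ condition. (This is precisely the configuration recorded in Lemma~\ref{lem:6s-3}~(ii) of the paper, where $f_4$ and $f_{8,1}$ share these common zeros.) The actual obstruction at $(n,s)=(3,1)$ is the degree-$6$ condition: on the locus $u^2=3u-1$ one computes $\tilde f_6(v_{a,1})=-22u+\tfrac{11}{2}$, which is nonzero at $u=(3\pm\sqrt5)/2$. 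So your case analysis for $n=3$ must be repaired to test all three equations simultaneously rather than only $\tilde f_4$ and $\tilde f_{8,1}$.

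For $n\ge 4$, your plan (eliminate $a^2$ via (\ref{eq:singleorbit-1}), clear the radical, and run resultants against the degree-$8$ conditions) is viable in principle---the system you impose is a superset of the one the paper uses, so it is certainly inconsistent---but as written it is a deferred computation rather than a completed argument, and it inherits an unnecessary case split. The paper's proof is simpler and uniform in $n\ge 3$: after discarding $s=0$ by Remark~\ref{rem:Tanino}, it computes a Groebner basis of the ideal $\langle \tilde f_4,\tilde f_6,\tilde f_{8,1}\rangle$ (no $\tilde f_{8,2}$, no distinction between $n=3$ and $n\ge 4$), whose elimination ideal forces $3n^4-200n^3-1412n^2-32752n-277504=0$, an equation with no integer solutions. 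Adopting that single elimination removes both the false step in your $n=3$ case and the incompleteness of your $n\ge 4$ case.
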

\begin{proof}
By Remark \ref{rem:Tanino}, we may assume $s\geq1$.
If a weighted $9$-design of type (\ref{eq:invariant1orbit})
exists, then by Theorem~\ref{thm:Sobolev}, 
Corollary~\ref{cor:dimension4-8} and Lemma~\ref{lem:vas-FullySymmetricDesign}, we have
$\tilde{f}_{4}(v_{a, s}) = \tilde{f}_{6}(v_{a, s}) = \tilde{f}_{8, 1}(v_{a, s}) = 0$.
By computing a Groebner basis for the ideal $\langle \tilde{f}_4, \tilde{f}_6, \tilde{f}_{8,1} \rangle$,
   we have
   \begin{align*}
      -277504 - 32752 n - 1412 n^2 - 200 n^3 + 3 n^4 = 0.
   \end{align*}
However
this equation has no integer solutions.
\end{proof}

\section{Designs with two proper orbits}\label{sect:double}

Throughout this section we assume that $n \ge 3$.
Let $a_1,a_2 > 0$, and let $s_1,s_2$ be integers 
with 
$0 \le s_i \le n-1$.\ We consider a weighted 
design
of type
\begin{equation}
   \label{eq:invariant2orbit}
   \frac{1}{|\mathbb{S}^{n-1}|} \int_{\mathbb{S}^{n-1}} f(y) d\rho
   =
   W_1 \sum_{x \in v_{a_1,s_1}^{B_n}} f(x) + W_2 \sum_{x \in v_{a_2,s_2}^{B_n}} f(x) \quad
   \textit{for every } f \in \mathcal{P}_t(\mathbb{S}^{n-1}).
\end{equation}

Let 
      \begin{align*}
         \begin{cases}
            G_{4, 6} (a_1, a_2, s_1, s_2) 
            &=  f_{4}(v_{a_1, s_1})  f_{6} (v_{a_2, s_2}) - f_{6}(v_{a_1, s_1}) f_{4} (v_{a_2, s_2}),                  \\
            G_{4, 8,i} (a_1, a_2, s_1, s_2)
            &= f_{4}(v_{a_1, s_1}) f_{8,i}(v_{a_2, s_2}) -  f_{8,i} (v_{a_1, s_1}) f_4(v_{a_2, s_2}), \quad i = 1, 2,  \\
            G_{6, 8,i} (a_1, a_2, s_1, s_2)
            &= f_{6}(v_{a_1, s_1}) f_{8,i}(v_{a_2, s_2}) -  f_{8,i} (v_{a_1, s_1}) f_6(v_{a_2, s_2}), \quad i = 1, 2,  \\
            G_{8,1, 8, 2} (a_1, a_2, s_1, s_2) 
            &= f_{8,1}(v_{a_1, s_1}) f_{8,2}(v_{a_2, s_2}) -  f_{8,2} (v_{a_1, s_1}) f_{8,1}(v_{a_2, s_2}).
        \end{cases}
      \end{align*}

\begin{remark}
By substituting $a_1 = a_2 = 1$, $s_1 = k_1 - 1$ and $s_2 = k_2 - 1$ into $G_{4,6}(a_1, a_2, s_1, s_2)$, it can be confirmed that this function coincides, up to a constant multiple, with the function $G(k_1, k_2)$ defined in Bajnok~\cite[p.390]{Bajnok2006}.
\end{remark}

\subsection{Characterization of $7$-designs}\label{subsect:2orb_degr7}

In this subsection, we show a
characterization theorem for $7$-designs with two orbits. 

\begin{proposition}
   \label{prop:Hirao2024-1}
    Let $n \geq 3$.
    A weighted $7$-design of type (\ref{eq:invariant2orbit}) exists if and only if one of the following cases (i)-(iii) holds:

   \begin{enumerate}
      \item[(i)]
           \[
               f_4(v_{a_1,s_1})  f_4(v_{a_2,s_2}) < 0
               \;,\; G_{4,6}(a_1,a_2,s_1,s_2) =0;
            \]

      \item[(ii)]
           \[
               f_4(v_{a_1,s_1}) = f_4(v_{a_2,s_2}) = 0 \;,\;
               f_6(v_{a_1,s_1}) f_6(v_{a_2,s_2}) < 0;
            \]
      \item[(iii)]
            \begin{equation}
               \label{eq:zeros1}
               f_4(v_{a_1,s_1}) = f_4(v_{a_2,s_2}) = f_6(v_{a_1,s_1}) = f_6(v_{a_2,s_2}) = 0.
            \end{equation}
   \end{enumerate}
\end{proposition}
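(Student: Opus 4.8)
The plan is to reduce the existence of a degree-$7$ design of type~(\ref{eq:invariant2orbit}) to the solvability, in \emph{strictly positive} unknowns, of a $2 \times 2$ homogeneous linear system whose coefficients are the harmonic values $f_4(v_{a_i,s_i})$ and $f_6(v_{a_i,s_i})$. First I would invoke Theorem~\ref{thm:Sobolev}, together with Corollary~\ref{cor:dimension4-8} and Proposition~\ref{prop:Molien}, to see that it suffices to test~(\ref{eq:invariant2orbit}) on the constant function and on a basis of $\Harm_4(\mathbb{S}^{n-1})^{B_n} \oplus \Harm_6(\mathbb{S}^{n-1})^{B_n}$, which by Lemma~\ref{lem:basisofHarm} is spanned by $f_4$ and $f_6$. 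Since each $f_j$ is $B_n$-invariant it is constant on every orbit, whence $\sum_{x \in v_{a_i,s_i}^{B_n}} f_j(x) = |v_{a_i,s_i}^{B_n}| \, f_j(v_{a_i,s_i})$; moreover each $f_j$ is a harmonic of positive degree and so integrates to $0$ by Proposition~\ref{prop:decomposition1}. Writing $\tilde{W}_i = W_i \, |v_{a_i,s_i}^{B_n}| > 0$, the constant test merely fixes the normalization $\tilde{W}_1 + \tilde{W}_2 = 1$, while the tests against $f_4$ and $f_6$ become
\begin{equation*}
\tilde{W}_1 f_4(v_{a_1,s_1}) + \tilde{W}_2 f_4(v_{a_2,s_2}) = 0, \qquad
\tilde{W}_1 f_6(v_{a_1,s_1}) + \tilde{W}_2 f_6(v_{a_2,s_2}) = 0.
\end{equation*}
Thus a design exists precisely when this system admits a solution with $\tilde{W}_1, \tilde{W}_2 > 0$, which may then be rescaled to sum to $1$ and converted back via $W_i = \tilde{W}_i / |v_{a_i,s_i}^{B_n}|$.

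Next I would determine when the positive quadrant meets the solution space, organising the argument by the vanishing of the two top coefficients $f_4(v_{a_1,s_1})$ and $f_4(v_{a_2,s_2})$. If both are nonzero, positivity of $\tilde{W}_1, \tilde{W}_2$ forces them to have opposite signs, i.e. $f_4(v_{a_1,s_1}) f_4(v_{a_2,s_2}) < 0$; solving the first equation for the ratio $\tilde{W}_1 : \tilde{W}_2$ and demanding consistency with the second amounts precisely to the vanishing of the determinant $G_{4,6}(a_1,a_2,s_1,s_2)$, which is case~(i). If exactly one of the two $f_4$-values vanishes, the corresponding equation reads $\tilde{W}_i f_4(v_{a_i,s_i}) = 0$ with $\tilde{W}_i > 0$ and $f_4(v_{a_i,s_i}) \neq 0$, an impossibility; so this configuration never arises. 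If both $f_4$-values vanish, the first equation is vacuous, and I would repeat the same trichotomy on the $f_6$-values: both nonzero forces the opposite-sign condition $f_6(v_{a_1,s_1}) f_6(v_{a_2,s_2}) < 0$ (case~(ii)), exactly one zero is again impossible, and both zero is case~(iii).

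For the converse I would exhibit explicit positive weights in each case. In~(i), take $\tilde{W}_1 = |f_4(v_{a_2,s_2})|$ and $\tilde{W}_2 = |f_4(v_{a_1,s_1})|$; the opposite signs make these kill $f_4$, and since $G_{4,6} = 0$ with the $f_4$-values nonzero the two coefficient rows are proportional, so the $f_6$-equation holds automatically. In~(ii) the $f_4$-equation is vacuous and the opposite signs of the $f_6$-values solve the $f_6$-equation with positive weights, while in~(iii) any positive $\tilde{W}_1, \tilde{W}_2$ serve. The computations are light, since Lemma~\ref{lem:vas-FullySymmetricDesign} already records $f_4(v_{a,s})$ and $f_6(v_{a,s})$ in closed form, up to the positive factors $(a^2+s)^k$ that affect neither signs nor zeros. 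The main point requiring care is the bookkeeping of signs: one must check that the opposite-sign conditions are genuinely equivalent to the existence of a \emph{positive} solution rather than merely a nontrivial one, and that the determinant condition $G_{4,6} = 0$ is correctly identified with collinearity of the coefficient rows rather than only being necessary for a nontrivial kernel.
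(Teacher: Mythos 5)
Your proposal is correct and takes essentially the same route as the paper: both reduce the design property, via Sobolev's theorem and the one-dimensionality of $\Harm_4(\mathbb{S}^{n-1})^{B_n}$ and $\Harm_6(\mathbb{S}^{n-1})^{B_n}$, to the positivity-constrained linear system $\tilde{W}_1 f_j(v_{a_1,s_1}) + \tilde{W}_2 f_j(v_{a_2,s_2}) = 0$ for $j=4,6$ with $\tilde{W}_i = W_i |v_{a_i,s_i}^{B_n}| > 0$, and then determine exactly when a positive solution exists. The only difference is bookkeeping: the paper row-reduces the augmented matrix in cases split by whether $f_4(v_{a_1,s_1}) = f_4(v_{a_2,s_2})$ (and then the $f_6$ values), while you split on vanishing and signs of the entries and exhibit explicit positive weights -- an equivalent organization of the same elementary linear algebra.
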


\begin{proof}
We solve the following system of liner equations
\begin{equation}
    \begin{bmatrix}
        1                   & 1                   \\
        f_{4}(v_{a_1, s_1}) & f_{4}(v_{a_2, s_2}) \\
        f_{6}(v_{a_1, s_1}) & f_{6}(v_{a_2, s_2})
    \end{bmatrix}
    \begin{bmatrix}
        \tilde{W}_1 \\ \tilde{W}_2
    \end{bmatrix}
    = \begin{bmatrix}
        1 \\ 0 \\ 0
    \end{bmatrix},
    \label{eq:system1}
\end{equation}
where $\tilde{W}_i = W_i|v_{a_i, s_i}^{B_n}| > 0,\;  (i = 1, 2)$.
We divide the situation into three cases:
(a) $f_{4}(v_{a_1, s_1}) \neq f_{4}(v_{a_2, s_2})$,\ (b) $f_{4}(v_{a_1, s_1}) = f_{4}(v_{a_2, s_2}), \;f_{6}(v_{a_1, s_1}) \neq  f_{6}(v_{a_2, s_2})$ and (c) $f_{4}(v_{a_1, s_1}) = f_{4}(v_{a_2, s_2}), \;f_{6}(v_{a_1, s_1}) =  f_{6}(v_{a_2, s_2})$.

We first consider Case (a).
Applying the row reduction in the standard linear algebra to the augmented coefficient matrix in (\ref{eq:system1}),
we have
\[
    \begin{bmatrix}
        1                   & 1                   & 1 \\
        f_{4}(v_{a_1, s_1}) & f_{4}(v_{a_2, s_2}) & 0 \\
        f_{6}(v_{a_1, s_1}) & f_{6}(v_{a_2, s_2}) & 0
    \end{bmatrix}
    \rightarrow
    \begin{bmatrix}
        1 & 1                                         & 1                     \\
        0 & f_{4}(v_{a_2, s_2}) - f_{4}(v_{a_1, s_1}) & - f_{4}(v_{a_1, s_1}) \\
        0 & f_{6}(v_{a_2, s_2}) - f_{6}(v_{a_1, s_1}) & - f_{6}(v_{a_1, s_1}) \\
    \end{bmatrix}
    \rightarrow
    \begin{bmatrix}
        1 & 0 & \frac{f_{4}(v_{a_2, s_2})}{f_{4}(v_{a_2, s_2}) - f_{4}(v_{a_1, s_1}) }       \\
        0 & 1 & - \frac{f_{4}(v_{a_1, s_1})}{f_{4}(v_{a_2, s_2}) - f_{4}(v_{a_1, s_1}) }     \\
        0 & 0 & \frac{G_{4,6}(a_1, a_2, s_1, s_2)}{f_{4}(v_{a_2, s_2}) - f_{4}(v_{a_1, s_1}) }
    \end{bmatrix}.
\]
Thus in this case, a weighted 7-design of type (\ref{eq:invariant2orbit}) exists if and only if
\begin{equation*}
    \begin{cases}
        f_{4}(v_{a_1, s_1}) \neq f_{4}(v_{a_2, s_2}),                                              \\
         \frac{f_{4}(v_{a_2, s_2})}{f_{4}(v_{a_2, s_2}) - f_{4}(v_{a_1, s_1}) } > 0,     \\
        - \frac{f_{4}(v_{a_1, s_1})}{f_{4}(v_{a_2, s_2}) - f_{4}(v_{a_1, s_1}) }  > 0, \\
        G_{4, 6} (a_1, a_2, s_1, s_2) = 0.
    \end{cases}
\end{equation*}
This is equivalent to
\begin{equation}
\label{eq:result1}
    \begin{cases}
        f_{4}(v_{a_1, s_1})  f_{4}(v_{a_2, s_2}) < 0, \\
        G_{4, 6} (a_1, a_2, s_1, s_2) = 0.
    \end{cases}
\end{equation}

Next we consider Case (b).
Applying the row reduction arguments again, 
the augmented coefficient matrix can be reduced to
\[
    \begin{bmatrix}
        1 & 0 & \frac{f_{6}(v_{a_2, s_2})}{f_{6}(v_{a_2, s_2}) - f_{6}(v_{a_1, s_1})}   \\
        0 & 1 & - \frac{f_{6}(v_{a_1, s_1})}{f_{6}(v_{a_2, s_2}) - f_{6}(v_{a_1, s_1})} \\
        0 & 0 & - f_{4}(v_{a_1, s_1})                                                   \\
    \end{bmatrix}.
\]
Thus in this case, a weighted 7-design of type (\ref{eq:invariant2orbit}) exists if and only if
\begin{equation*}
    \begin{cases}
        f_{4}(v_{a_1, s_1}) = f_{4}(v_{a_2, s_2})  = 0,                                         \\
        f_{6}(v_{a_2, s_2}) \neq  f_{6}(v_{a_1, s_1})  ,                                        \\
         \frac{f_{6}(v_{a_2, s_2})}{f_{6}(v_{a_2, s_2}) - f_{6}(v_{a_1, s_1})}  > 0, \\
         - \frac{f_{6}(v_{a_1, s_1})}{f_{6}(v_{a_2, s_2}) - f_{6}(v_{a_1, s_1})} > 0.
    \end{cases}
\end{equation*}
This is equivalent to
\begin{equation}
    \label{eq:result2}
    \begin{cases}
        f_{4}(v_{a_1, s_1}) = f_{4}(v_{a_2, s_2})  = 0, \\
        f_{6}(v_{a_1, s_1})  f_{6}(v_{a_2, s_2}) < 0.
    \end{cases}
\end{equation}
Finally, we consider Case (c).
Applying the same arguments again, the augmented coefficient matrix can be reduced to
\[
    \begin{bmatrix}
        1 & 1 & 1                     \\
        0 & 0 & - f_{4}(v_{a_1, s_1}) \\
        0 & 0 & - f_{6}(v_{a_1, s_1}) \\
    \end{bmatrix}.
\]
Thus, in Case (c), a weighted 7-design of type (\ref{eq:invariant2orbit}) exists if and only if
\begin{equation}
    \label{eq:result3}
        f_{4}(v_{a_1, s_1}) = f_{4}(v_{a_2, s_2}) = f_{6}(v_{a_1, s_1}) =  f_{6}(v_{a_2, s_2}) = 0.
\end{equation}

By summarizing (\ref{eq:result1})-(\ref{eq:result3}), we obtain all the desired conditions.
\end{proof}

\begin{theorem}
   \label{thm:main1}
   Let $n \geq 3$.
   Case (iii) of Proposition~\ref{prop:Hirao2024-1} does not occur if $s_1 \ne s_2$.
\end{theorem}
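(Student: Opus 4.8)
The plan is to reduce Case~(iii) to the single-orbit classification obtained in Section~\ref{sect:single}, and then to exploit the fact that distinct single-orbit $7$-designs live in distinct dimensions. Note first that the four equations in (\ref{eq:zeros1}) split into two independent pairs, $f_4(v_{a_1,s_1}) = f_6(v_{a_1,s_1}) = 0$ and $f_4(v_{a_2,s_2}) = f_6(v_{a_2,s_2}) = 0$. Since the factors $(a^2+s)^k$ appearing in Lemma~\ref{lem:vas-FullySymmetricDesign} are strictly positive for $a>0$ and $s\ge 0$, the equation $f_4(v_{a,s})=0$ is equivalent to $\tilde f_4(v_{a,s})=0$, and likewise for $f_6$. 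Hence each pair is exactly the condition, isolated in Proposition~\ref{prop:Tanino1}, for the single orbit $v_{a_i,s_i}^{B_n}$ to form a $7$-design by itself. In other words, Case~(iii) occurs precisely when there are two single-orbit $7$-designs $v_{a_1,s_1}^{B_n}$ and $v_{a_2,s_2}^{B_n}$ sharing the same dimension $n$ but with $s_1\ne s_2$; this is the configuration I would refute.

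First I would record that each $(n,s_i)$ must then satisfy the defining relation (\ref{eq:singleorbit-2}) together with an admissible $a_i>0$ given by (\ref{eq:singleorbit-1}); by Remark~\ref{rem:Tanino} this already forces $s_i\ge 1$ since $n\ge 3$. By Theorem~\ref{thm:main2} there are only finitely many such pairs, and by the explicit determination in Remark~\ref{thm:Tanino} the complete list of admissible triples is $(n,a,s)\in\{(16,2,8),(23,2,11)\}$. Because these two triples have different first coordinates ($16\ne 23$) while both of our orbits carry the same $n$, the triples $(n,a_1,s_1)$ and $(n,a_2,s_2)$ are forced to coincide with the same entry of the list; in particular $s_1=s_2$, contradicting the hypothesis $s_1\ne s_2$. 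This closes the argument.

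If one prefers a proof that does not appeal to the completeness of the numerical search in Remark~\ref{thm:Tanino}, the alternative plan is to show directly that, for each fixed $n$, the curve (\ref{eq:curve1}) meets the vertical line $x=n$ in at most one point producing a genuine design. Here $f(n,y)$ is a cubic in $y$, so the task is to rule out a second admissible root. Reality of $a^2$ in (\ref{eq:singleorbit-1}) forces $(2+n)s(1-n+3s)\ge 0$, hence $s\ge (n-1)/3$; and a short computation shows that the ``$-$'' branch requires in addition $s<(n+2)/3$, a half-open interval $[(n-1)/3,\,(n+2)/3)$ of length one that contains at most one integer. The main obstacle is to control the roots coming from the ``$+$'' branch over the remaining range $(n-1)/3\le s\le n-1$: using the factorization $f_y(n,y)=-3(n-3y+1)(3n^2-9ny+7n-6y-10)$ recorded in the proof of Theorem~\ref{thm:main2}, one locates the two critical points of the cubic and then argues, by sign analysis at the endpoints, that two admissible integer roots cannot coexist for a single $n$. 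I expect this monotonicity bookkeeping to be the genuinely delicate step, whereas the reduction in the first paragraph is immediate.
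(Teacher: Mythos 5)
Your opening reduction is correct and matches the paper's: Case~(iii) is exactly the statement that $v_{a_1,s_1}^{B_n}$ and $v_{a_2,s_2}^{B_n}$ are each single-orbit $7$-designs in the same dimension, so each pair $(n,s_i)$ must satisfy (\ref{eq:singleorbit-1})--(\ref{eq:singleorbit-2}). The gap is in the next step: you treat the list $(n,a,s)\in\{(16,2,8),(23,2,11)\}$ of Remark~\ref{thm:Tanino} as a \emph{complete} classification of solutions, but the paper never claims this. The remark is an existence statement (``a weighted $7$-design \dots exists \emph{if} $(n,a,s)=(16,2,8),(23,2,11)$''), produced by a computer search, and completeness cannot be extracted from Theorem~\ref{thm:main2} either: its finiteness assertion rests on Siegel's theorem, which is ineffective, so it gives no height bound with which a finite search could be certified exhaustive. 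Hence your main argument rests on a fact that is neither proved in the paper nor obtainable from the results you cite. Your fallback sketch does not close the gap: you explicitly leave the ``monotonicity bookkeeping'' for the $+$ branch unproved, and that is precisely the hard part (the $-$ branch interval argument you give is fine but easy).

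The paper's proof avoids this issue entirely by never classifying single-orbit designs. After squaring (\ref{eq:singleorbit-2}) to get the quartic relation
\begin{equation*}
0 = n^4 + (6-9s_i)n^3 + (1-30s_i+27s_i^2)n^2 - (24-9s_i-54s_i^2+27s_i^3)n - 18s_i^3 - 36s_i^2 + 30s_i + 16, \quad i=1,2,
\end{equation*}
it subtracts the two relations and divides by $3(s_2-s_1)$ --- legitimate exactly because $s_1\ne s_2$ --- to obtain the symmetric Diophantine equation (\ref{eq:Tanino4}), which is of low enough complexity that \emph{all} its positive integer solutions can be determined: $(n,s_1,s_2)=(8,3,3),(5,2,2),(2,1,1)$, each with $s_1=s_2$. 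The point is that the difference equation can be solved completely, whereas the individual curve (\ref{eq:curve1}) cannot (by the methods of the paper). If you want to salvage your approach, you would need to replace the appeal to Remark~\ref{thm:Tanino} by this subtraction trick, or else actually carry out the sign analysis you defer, which amounts to redoing the paper's work in harder analytic form.
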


We prove Theorem \ref{thm:main1} in Section~\ref{sect:proof}.

\begin{remark}
We list $\tilde{W}_1, \tilde{W}_2$ of Cases (i), (ii) listed in Table \ref{tbl:7designW}
\begin{table}[htb]
\begin{center}
\caption{Weights of $7$-design}
\label{tbl:7designW}
\begin{tabular}{ccc}\hline
 &$\tilde{W}_1$&$\tilde{W}_2$\\\hline
 Case (i) 
 & $\frac{f_{4}(v_{a_2, s_2})}{f_{4}(v_{a_2, s_2}) - f_{4}(v_{a_1, s_1}) }$ 
 & $- \frac{f_{4}(v_{a_1, s_1})}{f_{4}(v_{a_2, s_2}) - f_{4}(v_{a_1, s_1}) }$\\ \hline
 Case (ii) 
 & $\frac{f_{6}(v_{a_2, s_2})}{f_{6}(v_{a_2, s_2}) - f_{6}(v_{a_1, s_1})}$ 
 & $- \frac{f_{6}(v_{a_1, s_1})}{f_{6}(v_{a_2, s_2}) - f_{6}(v_{a_1, s_1})}$ \\ \hline
\end{tabular}
\end{center}
\end{table}
\end{remark}

\subsection{Characterization of $9$-designs}\label{subsect:2orb_deg9}

We start with 
a characterization theorem for $9$-designs with two orbits. 

   \begin{proposition}
      \label{prop:Hirao2024-2}
     Let $n \geq 3$.
     A weighted $9$-design of type (\ref{eq:invariant2orbit}) exists if and only if one of the following cases (i)-(v) holds:
      \begin{enumerate}
         \item[(i)]
               \begin{align*}
                  f_{4}(v_{a_1, s_1}) f_{4}(v_{a_2, s_2}) < 0
                 ,\; G_{4, 6}(a_1, a_2, s_1, s_2) = G_{4, 8, 1}(a_1, a_2, s_1, s_2) = G_{4, 8, 2} (a_1, a_2, s_1, s_2) = 0.
               \end{align*}
         \item[(ii)]
               \begin{align*}
                   & f_{4}(v_{a_1, s_1}) =  f_{4}(v_{a_2, s_2}) = 0,\ f_{6}(v_{a_1, s_1}) f_{6}(v_{a_2, s_2}) < 0, \\
                   & \quad\;
                  G_{6, 8, 1}(a_1, a_2, s_1, s_2) = G_{6, 8, 2} (a_1, a_2, s_1, s_2)= 0.
               \end{align*}
         \item[(iii)]
               \begin{align*}
                   & f_{4}(v_{a_1, s_1}) =  f_{4}(v_{a_2, s_2}) = f_{6}(v_{a_1, s_1}) =  f_{6}(v_{a_2, s_2}) = 0, \\
                   & \quad\;  f_{8,1}(v_{a_1, s_1}) f_{8,1}(v_{a_2, s_2}) < 0 \;,\;
                  G_{8, 1, 8, 2}(a_1, a_2, s_1, s_2) = 0.
               \end{align*}
         \item[(iv)]
               \begin{align*}
                   & f_{4}(v_{a_1, s_1}) =  f_{4}(v_{a_2, s_2}) = f_{6}(v_{a_1, s_1}) =  f_{6}(v_{a_2, s_2}) =
                  f_{8,1}(v_{a_1, s_1}) = f_{8,1}(v_{a_2, s_2}) =  0,                                           \\
                   & \quad\; f_{8,2}(v_{a_1, s_1}) f_{8,2}(v_{a_2, s_2}) < 0.
               \end{align*}
         \item[(v)]
               \begin{align*}
                   & f_{4}(v_{a_1, s_1}) =  f_{4}(v_{a_2, s_2}) = f_{6}(v_{a_1, s_1}) =  f_{6}(v_{a_2, s_2}) = 0,        \\
                   & f_{8,1}(v_{a_1, s_1}) = f_{8,1}(v_{a_2, s_2}) =  f_{8,2}(v_{a_1, s_1}) = f_{8,2}(v_{a_2, s_2}) =  0.
               \end{align*}
      \end{enumerate}
\end{proposition}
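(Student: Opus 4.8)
The plan is to reduce the existence question to a linear feasibility problem and then run the same row-reduction and case-analysis used in the proof of Proposition~\ref{prop:Hirao2024-1}, now applied to a larger system. By Theorem~\ref{thm:Sobolev}, Corollary~\ref{cor:dimension4-8} and Lemma~\ref{lem:vas-FullySymmetricDesign}, a weighted $9$-design of type~(\ref{eq:invariant2orbit}) exists if and only if the positive numbers $\tilde{W}_i = W_i |v_{a_i,s_i}^{B_n}|$ satisfy
\begin{equation*}
\begin{bmatrix}
1 & 1 \\
f_{4}(v_{a_1,s_1}) & f_{4}(v_{a_2,s_2}) \\
f_{6}(v_{a_1,s_1}) & f_{6}(v_{a_2,s_2}) \\
f_{8,1}(v_{a_1,s_1}) & f_{8,1}(v_{a_2,s_2}) \\
f_{8,2}(v_{a_1,s_1}) & f_{8,2}(v_{a_2,s_2})
\end{bmatrix}
\begin{bmatrix} \tilde{W}_1 \\ \tilde{W}_2 \end{bmatrix}
=
\begin{bmatrix} 1 \\ 0 \\ 0 \\ 0 \\ 0 \end{bmatrix},
\end{equation*}
since $\Harm_i(\mathbb{S}^{n-1})^{B_n}$ vanishes in odd degrees and in degree $2$, is one-dimensional, spanned by $f_4$ and $f_6$ respectively, in degrees $4$ and $6$, and is two-dimensional, spanned by $f_{8,1},f_{8,2}$, in degree $8$ (for $n\ge4$). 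The top row records the normalization $\tilde{W}_1+\tilde{W}_2=1$ coming from $\Harm_0$. Thus the statement amounts to deciding exactly when this over-determined $5\times2$ system admits a strictly positive solution $(\tilde{W}_1,\tilde{W}_2)$.

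First I would organize the argument by the \emph{first} degree at which the two orbits disagree, reading the rows from top to bottom. If $f_4(v_{a_1,s_1}) \ne f_4(v_{a_2,s_2})$, the first two rows already determine $(\tilde{W}_1,\tilde{W}_2)$ uniquely as in Case (a) of Proposition~\ref{prop:Hirao2024-1}; positivity of both weights is then equivalent to $f_4(v_{a_1,s_1})f_4(v_{a_2,s_2})<0$, while compatibility with the three remaining rows is, by the same $2\times2$ determinant computation, exactly $G_{4,6}=G_{4,8,1}=G_{4,8,2}=0$, giving Case (i). If instead the degree-$4$ values coincide, consistency of that row forces $f_4(v_{a_1,s_1})=f_4(v_{a_2,s_2})=0$, and one repeats the argument with the degree-$6$ row as pivot: a difference there gives Case (ii), with positivity $\Leftrightarrow f_6(v_{a_1,s_1})f_6(v_{a_2,s_2})<0$ and compatibility $\Leftrightarrow G_{6,8,1}=G_{6,8,2}=0$, whereas equality forces $f_6=0$ on both orbits and pushes the pivot to degree $8$. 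Continuing, a difference at $f_{8,1}$ yields Case (iii), a difference at $f_{8,2}$ (after $f_{8,1}$ has also vanished on both orbits) yields Case (iv), and the fully degenerate branch, where all four harmonic values vanish on both orbits, leaves $\tilde{W}_1+\tilde{W}_2=1$ as the only constraint and hence always admits positive solutions, giving Case (v). Collecting the five branches reproduces precisely conditions (i)--(v).

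Each individual step is the same elementary row reduction recorded in the proof of Proposition~\ref{prop:Hirao2024-1}, so the computations are routine; the points requiring care are bookkeeping and one genuine boundary issue. The delicate point is the case $n=3$: there $\Harm_8(\mathbb{S}^{2})^{B_3}$ is one-dimensional by Corollary~\ref{cor:dimension4-8}, and $f_{8,2}$ is not even defined (its coefficients involve $1/(n-3)$). For $n=3$ the system therefore has only four rows, so Cases (iv) and (v) collapse and Case (iii) loses its $G_{8,1,8,2}=0$ constraint; I would dispose of $n=3$ as a short separate remark and run the main argument for $n\ge4$. The only other thing to verify is that the sign conventions in the definitions of $G_{4,6}$, $G_{4,8,i}$, $G_{6,8,i}$ and $G_{8,1,8,2}$ match the $2\times2$ determinants produced by the pivot rows, so that ``compatibility'' translates literally into the vanishing of the correctly-indexed $G_{\bullet}$; this is a direct check but must be carried out consistently across all branches.
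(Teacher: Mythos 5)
Your proposal is correct and follows essentially the same route as the paper's own proof in the Appendix: the same $5\times 2$ linear system from Sobolev's theorem, the dimension count, and Lemma~\ref{lem:vas-FullySymmetricDesign}, the same five-way case split according to the first degree at which the two orbits' harmonic values differ, the same row-reduction yielding positivity conditions and the vanishing of the corresponding $G_{\bullet}$ determinants, and the same separate treatment of $n=3$ by dropping the $f_{8,2}$ row. No gaps to report.
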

 
\begin{remark}
    When $n=3$, we may ignore all conditions involving $f_{8,2}$.
    For example, both Case (iv) and Case (v) can be reduced to
    \begin{equation*}
        f_{4}(v_{a_1, s_1}) =  f_{4}(v_{a_2, s_2}) = f_{6}(v_{a_1, s_1}) =  f_{6}(v_{a_2, s_2}) = 0,        \\
        \; \; f_{8,1}(v_{a_1, s_1}) = f_{8,1}(v_{a_2, s_2}) =  0.
    \end{equation*}
\end{remark}

\begin{theorem}\label{thm:main3}
   Cases (iii), (iv) and (v) of Proposition~\ref{prop:Hirao2024-2} do not occur.
\end{theorem}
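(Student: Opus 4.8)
The plan is to observe that the hypotheses of all three cases (iii)--(v) force each constituent orbit to be, by itself, a $7$-design of type (\ref{eq:invariant1orbit}). Indeed, every one of these cases imposes $f_4(v_{a_1,s_1}) = f_4(v_{a_2,s_2}) = f_6(v_{a_1,s_1}) = f_6(v_{a_2,s_2}) = 0$; since $a_i^2 + s_i > 0$ this is the same as $\tilde{f}_4(v_{a_i,s_i}) = \tilde{f}_6(v_{a_i,s_i}) = 0$, which by Proposition~\ref{prop:Tanino1} means precisely that the single orbit $v_{a_i,s_i}^{B_n}$ is a $7$-design. Thus both orbits are single-orbit $7$-designs lying on the same sphere $\mathbb{S}^{n-1}$, and this is the structural fact I would exploit.

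Cases (iv) and (v) then close immediately. In each of them the hypotheses additionally include $f_{8,1}(v_{a_1,s_1}) = 0$, so the first orbit satisfies $\tilde{f}_4(v_{a_1,s_1}) = \tilde{f}_6(v_{a_1,s_1}) = \tilde{f}_{8,1}(v_{a_1,s_1}) = 0$. But this is exactly the system treated in the proof of Proposition~\ref{prop:single9design}: the Groebner basis of $\langle \tilde{f}_4, \tilde{f}_6, \tilde{f}_{8,1} \rangle$ forces the quartic $3n^4 - 200n^3 - 1412n^2 - 32752n - 277504 = 0$, which has no integer root. Hence no admissible orbit can satisfy these three conditions, and Cases (iv) and (v) are vacuous.

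Case (iii) requires a little more, since there only the product condition $f_{8,1}(v_{a_1,s_1}) f_{8,1}(v_{a_2,s_2}) < 0$ is imposed and no orbit is forced to annihilate $f_{8,1}$. Here I would invoke the classification of single-orbit $7$-designs: by Theorem~\ref{thm:main2} the admissible pairs $(n,s)$ are finite in number, and by Remark~\ref{thm:Tanino} the complete list is $v_{2,8}^{B_{16}}$ and $v_{2,11}^{B_{23}}$. For $n \notin \{16,23\}$ no single-orbit $7$-design exists, so the shared requirement $f_4 = f_6 = 0$ on both orbits is already unsatisfiable. For $n \in \{16,23\}$ the orbit is moreover unique: only one value of $s$ is admissible, and of the two roots $a^2$ furnished by (\ref{eq:singleorbit-1}) exactly one is positive (one finds $a^2 = 4$ in both cases). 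Since the two orbits of Case (iii) share the dimension $n$, this uniqueness forces $(a_1,s_1) = (a_2,s_2)$, whence the product becomes $f_{8,1}(v_{a_1,s_1})^2 \ge 0$, contradicting its required strict negativity.

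The step I expect to demand the most care is the uniqueness used in Case (iii): a priori a single dimension might support two distinct proper orbits both annihilating $f_4$ and $f_6$ --- one from each sign branch of (\ref{eq:singleorbit-1}) --- and were that to happen the sign argument would collapse. Confirming that the discarded branch is always negative at the two admissible pairs, so that the orbit is genuinely unique and the two orbits coincide, is the crux; the rest of the argument is formal, resting on Proposition~\ref{prop:single9design}, Theorem~\ref{thm:main2}, and Remark~\ref{thm:Tanino}.
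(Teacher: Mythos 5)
Your handling of Cases (iv) and (v) is correct, and is in fact lighter than the paper's own treatment: the paper proves a dedicated lemma (Lemma~\ref{lem:6s-3}, a Groebner-basis analysis of the two-orbit system giving $n=6s-3$ for $n\ge 4$ and explicit values of $a_1,a_2,s$ for $n=3$) and then derives contradictions such as $a_1^2a_2^2<0$, whereas you observe that in these cases a \emph{single} orbit already satisfies $\tilde{f}_4(v_{a_1,s_1})=\tilde{f}_6(v_{a_1,s_1})=\tilde{f}_{8,1}(v_{a_1,s_1})=0$, the very polynomial system shown to be unsolvable in integers $n\ge 3$, $s\ge 1$ (with $s=0$ excluded by Remark~\ref{rem:Tanino}) in the proof of Proposition~\ref{prop:single9design}. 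That reduction is legitimate, since that proof uses only the polynomial system and not the $9$-design hypothesis.

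Case (iii) is where your argument has a genuine gap. You write that ``by Remark~\ref{thm:Tanino} the complete list is $v_{2,8}^{B_{16}}$ and $v_{2,11}^{B_{23}}$,'' but Remark~\ref{thm:Tanino} asserts only that single-orbit $7$-designs \emph{exist} for $(n,a,s)=(16,2,8),(23,2,11)$; these are examples produced by a computer search, not a classification. No classification is available from the paper, nor can one be extracted from Theorem~\ref{thm:main2}: its finiteness statement rests on Siegel's theorem on integral points, which is ineffective and yields no bound on $n$, so no finite search can certify that the list is complete. Hence your assertion ``for $n\notin\{16,23\}$ no single-orbit $7$-design exists'' is unproved, and with it your whole treatment of Case (iii). (The uniqueness concern you flag at the end --- whether both branches of (\ref{eq:singleorbit-1}) can be positive at an admissible $(n,s)$ --- is the smaller worry; the fatal one is the possible existence of admissible pairs $(n,s)$ outside your list.) The paper's proof is structured precisely to avoid needing the classification: under Case (iii), Theorem~\ref{thm:main1} forces $s_1=s_2=s$, and the existence of \emph{two distinct} positive values $a_1^2\neq a_2^2$ solving the system yields the explicit window of Lemma~\ref{lem:range-s}, namely $\frac{5n^2+15n-20}{12n+6}<s\le\frac{5n^2+15n-20}{9n+12}$; combining this with the sign condition on $f_{8,1}$ gives the effective bound $n\le 691$, after which a finite computation concludes. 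The two-orbit hypotheses thus supply exactly the effectivity that the single-orbit classification lacks; by discarding them you lose the ability to rule out unknown large-$n$ solutions.
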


We prove  Theorem~\ref{thm:main3} in Section~\ref{sect:proof}, 
where a proof of Proposition~\ref{prop:Hirao2024-2} is given in the Appendix. 

\bigskip

We close this subsection by discussing weighted $9$-designs of type $(v_{a_1, s_1}^{B_3} \cup v_{a_2, s_2}^{B_3}, \{ \tilde{W}_1, \tilde{W}_2 \})$.
Such $9$-designs can be classified by two examples listed in Table~\ref{tbl:weighted 9-design}, where numerical parameters are
considered with six significant digits.
(Throughout this paper, numerical results are treated with the same level of precision.)
What is remarkable here is that all these examples belong to Case (ii), 
which implies that
Case (i) may not occur even for $n \ge 4$.
\begin{table}[htb]
\begin{center}
\caption{Parameters of weighted 9-design}
\label{tbl:weighted 9-design}
\begin{tabular}{ccccccc} \hline
$s_1$ & $s_2$ & $a_1$ & $a_2$ & $W_1$ & $W_2$  \\ \hline 
1 & 2 & 0.396751 & 0.470350 & 0.0220088 & 0.0196579 \\ \hline
2 & 2 & 0.389041 & 3.89103 & 0.0193973 & 0.0222694 \\ \hline
\end{tabular}
\end{center}
\end{table}

The orbit $v_{a,1}^{B_3}$ can be understood as the vertices of a convex polyhedron
obtained by uniformly removing
the $6$ square pyramid parts from a regular octahedron, 
or by uniformly \textit{truncating} the $6$ vertices of a regular octahedron in such a way that each edge of the octahedron is divided into the ratio $a:1-a:a$ if $0 < a < 1$, and $1:a-1:1$ if $1 <a$. 
The values of $a_1$, $a_2$, and $W_1$,
appearing in Case 1 (see Figure~\ref{fig:9-design1}),
satisfy the equations
\[
\begin{cases}
17 - 92 A_2 + 78 A_2^2 - 44 A_2^3 + 5 A_2^4 = 0, \\
24 - 167 A_1 + 24 A_1^2 + 57 A_1 A_2 - 39 A_1 A_2^2 + 5 A_1 A_2^3 = 0,\\
-845 - 522 A_2 + 369 A_2^2 - 40 A_2^3 + 1785 W_1 = 0,
\end{cases}
\]
where $A_1 = a_1^2$ and $A_2 = a_2^2$.

\begin{figure}[htb]
\includegraphics[width=14cm]{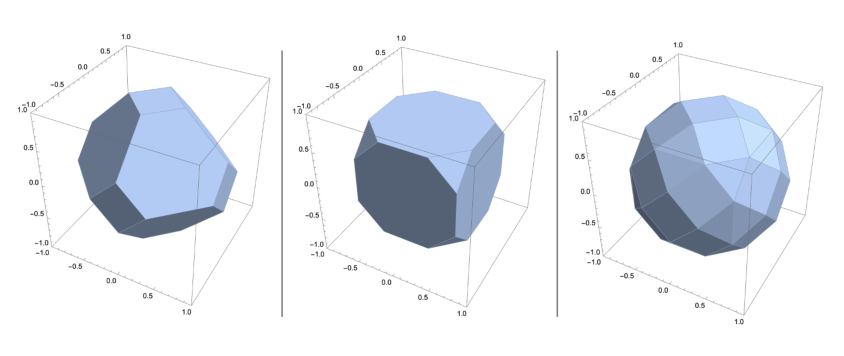}
\caption{$v_{a_1, 1}^{B_3}$(left), $v_{a_2, 2}^{B_3}$(middle), $v_{a_1, 1}^{B_3} \cup v_{a_2, 2}^{B_3}$(right)}\label{fig:9-design1}
\end{figure}

The orbit $v_{a,2}^{B_3}$ can be characterized in terms of vertex-truncation and edge-truncation.
When $0 < a < 1$, the orbit $v_{a,2}^{B_3}$ can be obtained by uniformly removing the $8$ rectangular triangular pyramids from a cube in such a way that each edge of the cube is divided into the ratio $1-a:2a:1-a$. If $1 < a$, the orbit $v_{a,2}^{B_3}$ can be obtained by uniformly cutting off the $8$ right-angle isosceles prisms including the edges of a cube in such a way that each edge of the cube is divided into the ratio $a-1:2:a-1$. 
The values of $a_1$, $a_2$, and $W_1$,
appearing in Case 2 (see Figure~\ref{fig:9-design2}), 
satisfy the equations
\[
\begin{cases}
-19 + 116 A_2 + 66 A_2^2 - 20 A_2^3 + A_2^4 = 0, \\
-175 + 12 A_1 - 47 A_2 + 19 A_2^2 - A_2^3 = 0, \\
-3956 + 2291 A_2 - 349 A_2^2 + 13 A_2^3 + 7770 W_1 = 0,
\end{cases}
\]
where $A_1 = a_1^2$ and $A_2 = a_2^2$.

\begin{figure}[H]
\label{fig2}
\includegraphics[width=14cm]{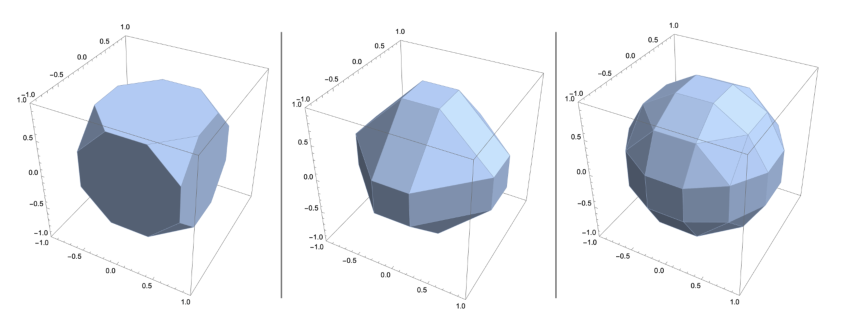}
\caption{$v_{a_1, 2}^{B_3}$(left), $v_{a_2, 2}^{B_3}$(middle), $v_{a_1, 2}^{B_3} \cup v_{a_2, 2}^{B_3}$(right)}\label{fig:9-design2}
\end{figure}

Note that all $24$-point configurations consisting of each single proper orbit are 3-designs.

\subsection{Proofs of Theorems~\ref{thm:main1} and~\ref{thm:main3}} 
\label{sect:proof}

\begin{proof}[Proof of Theorem~\ref{thm:main1}]
   Suppose that $s_1 \ne s_2$. 
   By the similar argument that used in Proposition~\ref{prop:Tanino1}, 
   it holds that
    \begin{equation*}
       \begin{gathered}
          h(n, s_i) = n^3 + (2-9s_i)n^2 + (12s_i^2 -9s_i -7)n + 6s_i^2 + 18s_i + 4 \\
          \qquad \qquad \pm (n^3 - 3(s_i-2) -3s_i-7) \sqrt{(n+2)s_i(1-n+3s_i)} = 0, \quad i=1,2.
       \end{gathered}
    \end{equation*}
Since 
$(n^3 + (2-9s_i)n^2 + (12s_i^2 -9s_i -7)n + 6s_i^2 + 18s_i + 4)^2 = 
((n^3 - 3(s_i-2) -3s_i-7) \sqrt{(n+2)s_i(1-n+3s_i)})^2$ for each $i$,
by noting $n \geq 3$ and $s_i \geq 0$, we obtain 
   \begin{equation*}
      \begin{gathered}
         0 = n^4 + (6-9s_i)n^3 + (1-30s_i+27s_i^2)n^2 - (24-9s_i-54s_i^2+27s_i^3)n \\
         \qquad - 18s_i^3 - 36s_i^2 + 30s_i + 16, \quad i=1,2.
      \end{gathered}
   \end{equation*}
   By subtracting them and then dividing it by $3(s_2-s_1)$, we obtain the Diophantine equation
   \begin{equation}
      \label{eq:Tanino4}
      \begin{gathered}
         0 = 3n^3 + (10-9s_1-9s_2)n^2 -3 (1+6s_1+6s_2-3s_1^2-3s_1s_2-3s_2^2)n \\
         \qquad -10+12s_1+12s_2+6s_1^2+6s_1s_2+6s_2^2.
      \end{gathered}
   \end{equation}
   By using Mathematica, the positive integer solutions of (\ref{eq:Tanino4}) are $(n, s_1, s_2) = (8, 3, 3)$, $(5, 2, 2)$, $(2, 1, 1)$, each of which does not satisfy the restriction $s_1 \ne s_2$.
\end{proof}

Next we prove Theorem~\ref{thm:main3} by focusing on Cases  (iii), (iv), and (v) of Proposition~\ref{prop:Hirao2024-2}. 

We first consider the case where $s_1 \neq s_2$.
In this case, Theorem~\ref{thm:main1} implies the nonexistence of a weighted $9$-design.
Thus, our interest goes to the case where $s_1 = s_2$. 

To prove Theorem~\ref{thm:main3}, 
we prepare the following two lemmas.
\begin{lemma}\label{lem:range-s}
Let $n \geq 3$, $a_1\neq a_2$ and 
$s_1 = s_2$ (say $s$).
Suppose $f_4(v_{a_1,s}) = f_4(v_{a_2, s}) = f_{6}(v_{a_1, s}) = f_{6}(v_{a_2,s})= 0$.
Then
\begin{equation*}
\frac{5n^2 + 15 n - 20}{12 n + 6} < s \leq \frac{5n^2 + 15 n - 20}{9n + 12}.
\end{equation*}
\end{lemma}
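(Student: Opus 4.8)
The plan is to eliminate $a_1,a_2$ and reduce everything to an explicit relation between $n$ and $s$, following the strategy of Proposition~\ref{prop:Tanino1}. By Lemma~\ref{lem:vas-FullySymmetricDesign}, and since $a^2+s>0$, the hypotheses are equivalent to $\tilde f_4(v_{a_i,s})=\tilde f_6(v_{a_i,s})=0$ for $i=1,2$. Reading $\tilde f_4(v_{a,s})$ as a quadratic in $a^2$, the equation $\tilde f_4=0$ forces $a^2=\tfrac{3s\pm\sqrt{(2+n)s(1-n+3s)}}{n-1}$, exactly as in (\ref{eq:singleorbit-1}). Because $a_1\neq a_2$ carry the common value $s$, the two orbits must realise the two distinct signs here; in particular the radicand is strictly positive, which already yields the reality constraint $s>\tfrac{n-1}{3}$, and the product of the two values of $a^2$ being positive gives $s<\tfrac{n+2}{3}$.

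The next step is to substitute each of the two values of $a^2$ into $\tilde f_6(v_{a,s})=0$. As in the proof of Proposition~\ref{prop:Tanino1} this produces $h_+(n,s)=0$ and $h_-(n,s)=0$ with $h_\pm$ as in (\ref{eq:singleorbit-2}). Adding the two equations cancels the radical and isolates the rational part,
\[
(12n+6)\,s^2-(9n^2+9n-18)\,s+(n-1)^2(n+4)=0,
\]
while subtracting them and dividing by the nonzero radical leaves the companion relation $n^2-3(s-2)n-3s-7=0$. Thus $s$ is pinned to a root of an explicit quadratic whose leading coefficient $12n+6$ is positive, subject to the reality and positivity windows above.

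Finally I would locate the admissible root. One checks that the smaller root of the displayed quadratic lies below $\tfrac{n-1}{3}$ and is discarded by reality, so $s$ must equal its larger root; bracketing the resulting radical between consecutive rational functions of $n$ then gives the lower bound $s>\tfrac{5n^2+15n-20}{12n+6}$, and feeding back the companion relation (equivalently the positivity window $s<\tfrac{n+2}{3}$) delivers the upper bound $s\le\tfrac{5n^2+15n-20}{9n+12}$. I expect the genuine difficulty to be precisely this last step: passing from an irrational root to clean rational two-sided bounds requires isolating the square root, squaring, and verifying the resulting polynomial inequalities in $n$, all while tracking which sign is admissible and carrying the $(n-1),(n-2)$ denominators inherited from Lemma~\ref{lem:vas-FullySymmetricDesign}. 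It is worth noting that the lower and upper bounds here come from two different necessary conditions (the rational relation together with reality for the former, and the positivity/companion relation for the latter), and that the resulting interval is in fact incompatible with the positivity window; confirming this compatibility bookkeeping is the part that demands the most care.
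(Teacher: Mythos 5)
Your proof is correct, but it is genuinely different from the paper's. The paper never uses the individual equations $\tilde{f}_4(v_{a_i,s})=0$: it works only with the two differences, reading the sum $a_1^2+a_2^2=\tfrac{6s}{n-1}$ off $\tilde{f}_4(v_{a_1,s})-\tilde{f}_4(v_{a_2,s})=0$ and then, substituting this sum into the difference of the $\tilde{f}_6$'s, computing the product $a_1^2a_2^2$ as the rational function in (\ref{eq:s1s2-3}); the lower bound is positivity of that product, and the upper bound is nonnegativity of the discriminant of the quadratic with that sum and product (reality of $a_1^2,a_2^2$). You instead use the equations at full strength: Vieta on the $\tilde{f}_4$-quadratic gives you the product $a_1^2a_2^2=\tfrac{s(n+2-3s)}{n-1}$ as well as the sum, whence your window $\tfrac{n-1}{3}<s<\tfrac{n+2}{3}$, and the two $\tilde{f}_6$ equations give $h_+=h_-=0$, which you split into the rational part and the companion relation. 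Your asserted inequalities all check out: the larger root of the rational-part quadratic exceeds $\tfrac{5n^2+15n-20}{12n+6}$ (the comparison reduces to $(n+4)(n-2)>0$), the smaller root lies below $\tfrac{n-1}{3}$ (it reduces to $32n^2+80n+32>0$), and $\tfrac{n+2}{3}\le\tfrac{5n^2+15n-20}{9n+12}$ for $n\ge3$, so both bounds follow. Your one misstatement --- the companion relation is an equation, not ``equivalently'' the inequality $s<\tfrac{n+2}{3}$ --- is harmless, since the positivity window alone yields the upper bound. What your route buys, and what your final sentence gestures at without quite saying, is that your system of necessary conditions is strictly stronger than the paper's and is in fact inconsistent: the companion relation forces $s=\tfrac{(n-1)(n+7)}{3(n+1)}$, which contradicts your own window $s<\tfrac{n+2}{3}$ for every $n\ge 3$ (the comparison reduces to $3n<9$). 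So the hypotheses of the lemma can never be satisfied; the lemma is vacuously true, a derivation of the bounds from inconsistent premises by valid steps is still a proof, and no further ``compatibility bookkeeping'' is owed. Incidentally, this inconsistency would eliminate Case (iii) of Proposition~\ref{prop:Hirao2024-2} outright, without the bound $n\le 691$ and the computer search used in the paper's proof of Theorem~\ref{thm:main3}.
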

\begin{proof}
Suppose (\ref{eq:zeros1}) with $a_1 \neq a_2$ and 
$s = s_1 = s_2$.
Since $\tilde{f}_4(v_{a_1, s}) - \tilde{f}_4(v_{a_2,s}) = 0$, we have
   \begin{equation}
      \label{eq:s1s2-1}
      a_1^2 + a_2^2 =  \frac{6s}{n-1}.
   \end{equation}
Since $\tilde{f}_6(v_{a_1, s}) - \tilde{f}_6(v_{a_2,s}) = 0$, we have
\begin{equation}
\label{eq:s1s2-2}
(a_1^4 + a_1^2 a_2^2 + a_2^4) (-2 + n) (-1 + n) - 60 s - 
 15 (a_1^2 + a_2^2) (-2 + n) s - 15 n s + 90 s^2 = 0.
\end{equation}

Substituting (\ref{eq:s1s2-1}) into (\ref{eq:s1s2-2}), 
we have
\begin{equation}
\label{eq:s1s2-3}
a_1^2 a_2^2 = \frac{3 s \{ 20 - 5 n^2 + 6 s + 3 n (-5 + 4 s)\}}{(-2 + n) (-1 + n)^2}.
\end{equation}
Since $a_1^2 a_2^2 >0$, we obtain 
\begin{equation}
\label{eq:s1s2-4}
\frac{5n^2 + 15 n - 20}{12 n + 6} < s \leq n-1.
\end{equation}
Meanwhile, solving for $a_1^2$ and $a_2^2$ from (\ref{eq:s1s2-1}) and (\ref{eq:s1s2-3}), we know that 
$a_1^2$ (or $a_2^2$) is equal to  
\begin{equation*}
\frac{3s}{n-1} \pm \frac{\sqrt{3 (-2 + n) s \{ 5 (-1 + n) (4 + n) - 3 (4 + 3 n) s \}}}{(-2 + n) (-1 + n)}.
\end{equation*}
Since $a_1^2$ and $a_2^2$ are positive reals, 
by noting $0 \leq s \leq n-1$, we have
\begin{equation}\notag
s \big (5 n^2 + n (15 - 9 s) - 4 (5 + 3 s) \big ) \geq 0 \; \quad \text{and} \quad \; 0 \leq s \leq n -1 .
\end{equation}
Then we obtain
\begin{equation}
\label{eq:s1s2-5}
    0 \leq s \leq \frac{5 n ^2 + 15 n -20}{9n+12}.
\end{equation}
Thus, by combining (\ref{eq:s1s2-4}) and (\ref{eq:s1s2-5}), 
we obtain the desired result.
\end{proof}

\begin{lemma}\label{lem:6s-3}
   Assume $a_1 \neq a_2$. Then the following hold:

   \begin{enumerate}
       \item[(i)] 
   If $n \geq 4$ and
   $f_4(v_{a_1,s}) = f_4(v_{a_2, s}) = f_{8,1}(v_{a_1, s}) = f_{8,1}(v_{a_2,s}) = f_{8,2}(v_{a_1, s}) = f_{8,2}(v_{a_2,s}) = 0$,
   then 
   \[
      n = 6 s - 3.
   \]
   \item[(ii)]
   If $n = 3$ and
   $f_4(v_{a_1,s}) = f_4(v_{a_2, s}) = f_{8,1}(v_{a_1, s}) = f_{8,1}(v_{a_2,s}) = 0$,
   then
   \begin{equation}\label{eq:a1a2}
      a_1=\sqrt{\frac{3\mp\sqrt{5}}{2}},\ a_2=\frac{\pm 1+\sqrt{5}}{2},\ s=1.
   \end{equation}
 
   \end{enumerate}

\end{lemma}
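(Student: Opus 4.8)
The plan is to convert every hypothesis into a polynomial condition in the single variable $A := a^2$, using the rescaled quantities of Lemma~\ref{lem:vas-FullySymmetricDesign}. Since $\tilde f_\bullet(v_{a,s}) = (a^2+s)^{d/2} f_\bullet(v_{a,s})$ with $a^2+s>0$, the condition $f_\bullet(v_{a_i,s})=0$ is equivalent to $\tilde f_\bullet(v_{a_i,s})=0$. Viewed as functions of $A=a^2$, the polynomials $\tilde f_4$, $\tilde f_{8,1}$, $\tilde f_{8,2}$ have degrees $2$, $4$, $2$ respectively, with coefficients that are rational functions of $n$ and $s$. The hypotheses assert that $A_1:=a_1^2$ and $A_2:=a_2^2$ are two \emph{distinct positive} common roots. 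Writing $p=A_1+A_2$ and $q=A_1A_2$, I will pull out symmetric relations: either by Vieta's formulas applied to the monic quadratic $\tilde f_4$, or (equivalently, in the style of Lemma~\ref{lem:range-s}) by forming differences $\tilde f_\bullet(v_{a_1,s})-\tilde f_\bullet(v_{a_2,s})$ and dividing by $A_1-A_2\neq 0$.

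For part (i), Vieta applied to $\tilde f_4(A)=A^2-\frac{6s}{n-1}A+\bigl(s-\frac{3s(s-1)}{n-1}\bigr)$ gives $p=\frac{6s}{n-1}$ and $q=s-\frac{3s(s-1)}{n-1}$. The key structural point is that $\tilde f_{8,2}$ is also \emph{quadratic} in $A$ (its top term is $a^4=A^2$), so its difference, divided by $A_1-A_2$, is linear and depends only on $p$; it reads $\alpha p+\beta=0$ with $\alpha=1-\frac{3(s-1)}{n-2}$ and $\beta=-\frac{6(s-1)}{n-2}+\frac{18(s-1)(s-2)}{(n-2)(n-3)}$. Substituting $p=\frac{6s}{n-1}$ and clearing the common denominator $(n-1)(n-2)(n-3)$ (all factors nonzero since $n\ge 4$) yields, after simplification, $n^2-7sn+2n+6s^2+3s-3=0$, which factors as $(n-6s+3)(n-s-1)=0$. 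It remains to discard the branch $n=s+1$: there $q=s-\frac{3s(s-1)}{n-1}=3-2s=5-2n$, which is strictly negative for $n\ge 4$ and contradicts $q=A_1A_2>0$. Hence $n=6s-3$. (The vanishing of $f_{8,1}$ is not actually needed for this conclusion; equivalently, since $\tilde f_4$ and $\tilde f_{8,2}$ are quadratics sharing both roots $A_1\neq A_2$, they must be proportional, and matching coefficients reproduces the same equation.)

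For part (ii) one has $n=3$, where there is no independent $f_{8,2}$ because $\Harm_8(\mathbb{S}^2)^{B_3}$ is one-dimensional (Corollary~\ref{cor:dimension4-8}); so I use only $f_4$ and $f_{8,1}$. Vieta on $\tilde f_4$ with $n=3$ gives $p=3s$ and $q=\frac{s(5-3s)}{2}$. Positivity $q=A_1A_2>0$ together with $s\ge 1$ (an orbit $v_{a,0}^{B_3}$ is never proper, so $s\neq 0$) forces $s=1$. Then $A_1,A_2$ are the roots of $x^2-3x+1=0$, namely $A=\frac{3\pm\sqrt5}{2}$; taking square roots and using $\bigl(\frac{\pm 1+\sqrt5}{2}\bigr)^2=\frac{3\pm\sqrt5}{2}$ to simplify the nested radical yields exactly~(\ref{eq:a1a2}). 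Finally I would verify consistency with the remaining hypothesis by reducing $\tilde f_{8,1}(A)=A^4-14A^3+35A^2-14A+1$ modulo $A^2-3A+1$, which gives $0$, so $f_{8,1}$ vanishes automatically at these points.

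The only genuinely delicate step is the algebra in part (i): one must exploit that both $\tilde f_4$ and $\tilde f_{8,2}$ are quadratic in $A$ so that their differences are linear, track the rational coefficients carefully over the denominator $(n-1)(n-2)(n-3)$, and confirm the factorization $(n-6s+3)(n-s-1)$. Beyond that, the argument is a routine Vieta computation, and the positivity of $q=A_1A_2$ is the single sign constraint that does the real work — eliminating the spurious branch $n=s+1$ in part (i) and pinning down $s=1$ in part (ii).
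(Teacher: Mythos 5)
Your proof is correct, and it takes a genuinely different route from the paper's. The paper's proof is computational: it forms the difference polynomials $g_1,g_2,g_3$ coming from $\tilde f_4$, $\tilde f_{8,1}$, $\tilde f_{8,2}$ and computes Gr\"obner bases by machine, reading off the factor $(3+n-6s)(n-1-s)$ for part (i) and discarding the branch $n=s+1$ by substituting it back into a second basis element of the form $(5-4a_2+a_2^2)(5+4a_2+a_2^2)(n-1)(3+n-6s)$; for part (ii) it extracts $a_1^2+a_2^2=3s$ together with a quartic relation in $a_2$ whose real solvability forces $s=1$. You instead observe that $\tilde f_4$ and $\tilde f_{8,2}$ are quadratics in $A=a^2$, so Vieta plus the difference-quotient trick yields the symmetric relations by hand; your factorization $(n-6s+3)(n-s-1)=0$ matches the paper's Gr\"obner output (the expansion $n^2-7sn+2n+6s^2+3s-3$ checks out), and you eliminate $n=s+1$ by the sign constraint $q=A_1A_2>0$ (indeed $q=3-2s=5-2n<0$ on that branch), where the paper instead argues inside the Gr\"obner basis. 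Your route buys two things: it is verifiable without computer algebra, and it exposes redundancy in the hypotheses --- $f_{8,1}$ is never needed in (i), and (ii) follows from the $f_4$ conditions alone, the $f_{8,1}$ conditions holding automatically, as your reduction of $A^4-14A^3+35A^2-14A+1$ modulo $A^2-3A+1$ confirms. What the paper's mechanical recipe buys is uniformity: the same Gr\"obner computation pattern applies when the relevant invariants are not conveniently quadratic in $a^2$ (e.g.\ $\tilde f_6$ in the proof of Theorem~\ref{thm:main1}). One small caveat: your parenthetical reason for $s\neq 0$ in (ii), that $v_{a,0}^{B_3}$ is ``never proper'', is off --- properness concerns $a\neq 1$, not $s$ --- but nothing is lost, since $q>0$ alone forces $s(5-3s)>0$ and hence $s=1$.
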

\begin{proof}
Suppose $n\geq 4$.\ 
   By recalling (\ref{eq:s1s2-1}),  $\tilde{f}_4(v_{a_1, s}) - \tilde{f}_4(v_{a_2,s}) = 0$ and $\tilde{f}_{8, i} (v_{a_1, s}) - \tilde{f}_{8, i} (v_{a_2, s}) = 0 \; (i = 1, 2)$,
   we have
   \begin{align*}
      g_1(a_1,a_2,s) & = - (a_1^2 + a_2^2) + (a_1^2 + a_2^2) n - 6 s = 0,                     \\
      g_2(a_1,a_2,s) & = -a_1^6 - a_1^4 a_2^2 - a_1^2 a_2^4 - a_2^6 + a_1^6 n + a_1^4 a_2^2 n \\
      +              & a_1^2 a_2^4 n + a_2^6 n - 28 s + 70 a_1^2 s - 28 a_1^4 s + 70 a_2^2 s -
      28 a_1^2 a_2^2 s - 28 a_2^4 s = 0,                                                       \\
      g_3(a_1,a_2,s) & = -18 + 3 a_1^2 + 3 a_2^2 - 6 n + 2 a_1^2 n + 2 a_2^2 n - a_1^2 n^2    \\
      -              & a_2^2 n^2 + 36 s - 9 a_1^2 s - 9 a_2^2 s + 6 n s + 3 a_1^2 n s +
      3 a_2^2 n s - 18 s^2 = 0.
   \end{align*}
   By computing a Groebner basis $\mathcal{G}$ for the ideal $\langle g_1,g_2,g_3 \rangle$,
   we can check that
   \begin{align}
      \{ & (3 + n - 6 s) (-1 + n - s),\                                              
         (5 - 4 a_2 + a_2^2) (5 + 4 a_2 + a_2^2) (-1 + n) (3 + n - 6 s) \} \subset \mathcal{G}
      \label{eq:gbasis1}
   \end{align}

   Thus it holds that
   \begin{equation*}
       n \in \{6s-3, s+1\}.
   \end{equation*}
By substituting $n = s + 1$ into  (\ref{eq:gbasis1}),
   we have
   \[
      (5 - 4 a_2 + a_2^2) (5 + 4 a_2 + a_2^2) (4 - 5 s) s = 0.
   \]
   This equation holds true only when $s=0$,
   but this is a contradiction.
   Thus we obtain $n = 6s - 3$.

Next,\ suppose $n=3$.\  By computing a Groebner basis $\mathcal{H}$ for the ideal $\langle g_1,g_2 \rangle$,
   we have
   \begin{align*}
      h_1(a_1,a_2,s) =s(14+8a_2^4-105s-24a_2^2s+99s^2)=0,\;
      h_2(a_1,a_2,s) =a_1^2+a_2^2-3s=0.
   \end{align*}
    We solve $14+8a_2^4-105s-24a_2^2s+99s^2=0$.
    This equation holds only when 
    \begin{equation*}
        a_2^2=\frac{3s}{2}\pm \frac{\sqrt{-14+105s-81s^2}}{2\sqrt{2}},\quad  
   \frac{35-\sqrt{721}}{54}<s<\frac{35+\sqrt{721}}{54}.
    \end{equation*}
    
   Therefore, we obtain $s=1$ and
   \begin{equation*}
      a_1=\sqrt{\frac{3\mp\sqrt{5}}{2}},\ a_2=\frac{\pm 1+\sqrt{5}}{2}.
   \end{equation*}

\end{proof}

We are ready to prove Theorem~\ref{thm:main3}.
\begin{proof}[Proof of Theorem~\ref{thm:main3}]
Let $n \geq 3$, and
suppose that there exists a weighted 9-design of type~(\ref{eq:invariant2orbit}).
By Theorem~\ref{thm:main1} we may consider the case where $s_1 = s_2$ (say $s$).

First, we consider Case (iii).
Then 
\begin{equation}
\begin{cases}
    f_{4}(v_{a_1, s}) =  f_{4}(v_{a_2, s})
    = f_{6}(v_{a_1, s}) =  f_{6}(v_{a_2, s}) = 0, \\ f_{8,1}(v_{a_1, s}) f_{8,1}(v_{a_2, s}) < 0.
\end{cases}
    \label{eq:f81}
\end{equation}
Then by Lemma~\ref{lem:range-s}, we have
    \begin{equation}
\label{eq:range-s}
\frac{5n^2 + 15 n - 20}{12 n + 6} < s \leq \frac{5n^2 + 15 n - 20}{9n + 12}.
\end{equation}
By using Mathematica, we find that $n$ satisfying  (\ref{eq:f81}) and (\ref{eq:range-s}) is upper bounded by 691.
However, for $n \leq  691$, we can also confirm that there exists no integer $s$ such that 
\[
f_{4}(v_{a_1, s}) = f_{6}(v_{a_2, s}) = G_{8,1,8,2}(a_1, a_2, s, s) = 0
\]

Next, we consider Cases (iv) and (v).
   Let $n\geq 4$.
   By Theorem~\ref{thm:Bajnok} we may assume that $s>0$.
   By combining (\ref{eq:s1s2-1}), (\ref{eq:s1s2-2}) and Lemma~\ref{lem:6s-3},
   we obtain
   \[
      a_1^2 a_2^2 = - \frac{3 s (-5 - 15 s + 27 s^2)}{(-2 + 3 s)^2 (-5 + 6 s)} < 0,
   \]
   which is a contradiction.
   Let $n=3$.
   Substituting $a_1, a_2, s$ as in
   (\ref{eq:a1a2})
   into  (\ref{eq:s1s2-2}), we have
   \begin{equation*}
(a_1^4 + a_1^2 a_2^2 + a_2^4) (-2 + n) (-1 + n) - 60 s - 
 15 (a_1^2 + a_2^2) (-2 + n) s - 15 n s + 90 s^2 = -44
 \neq 0,
\end{equation*}
which is again a contradiction.
\end{proof}

\section{Designs with more than two proper orbits on $\mathbb{S}^3$}\label{sect:3orbit}

In Theorem~\ref{thm:main_bound}, we have obtained a uniform bound for the existence of designs of type (\ref{eq:invariant0}), namely we have shown that if $n \ge 4$, and if there exists a $t$-design on $\mathbb{S}^{n-1}$ of type (\ref{eq:invariant0}), then $t \le 15$.

Schur's formula (\ref{eq:Schur}) on $\mathbb{S}^3$ has degree $11$.
Sawa and Xu~\cite{SX2013} discusses
a higher-dimensional extension of Schur's design, and discovers
many examples of weighted $11$-designs
in dimensions $3$ through $23$.
Note that all these designs contain only one proper orbit.
Sawa and Xu~\cite{SX2013} moreover establishes
that designs of type (\ref{eq:invariant0}) with only one proper orbit have degree at most $11$.

Then what about designs with at least two proper orbits?
The following is a weighted $11$-design on $\mathbb{S}^{3}$ with $3$ proper orbits:

\begin{itemize}
\item $(v_{a_1,2}^{B_4} \cup v_{a_2, 3}^{B_4} \cup v_{a_3, 3}^{B_4}, \{ W_1, W_2, W_3 \} )$, 
\[
\begin{cases}
a_1 = 0.470499, \quad a_2 = 1.17310, \quad a_3 = 3.78381, \\
W_1 = 0.00522948, \quad W_2 = 0.00192016, \quad W_3 = 0.00586062.
\end{cases}
\]
\end{itemize}
We could not have found $11$-designs, and even $9$-designs with $2$ proper orbits.

The use of more than $3$ proper orbits enables us to obtain $13$-designs. Indeed, the following two are weighted $13$-designs
on $\mathbb{S}^3$ with $5$ orbits:
\begin{itemize}
\item 
$(v_{1,4}^{B_4} \cup v_{a_2, 1}^{B_4} \cup v_{a_3, 2}^{B_4} \cup v_{a_4, 3}^{B_4} \cup v_{a_5, 3}^{B_4}, \{ W_1, W_2, W_3, W_4, W_5 \})$,
\[
\begin{cases}
a_2 = 0.444883, \quad a_3 = 0.509692, \quad a_4 = 9.68607, \quad a_5 = 2.53788, \\
W_1 = 0.00475002, \quad W_2 = 0.00407904, \quad W_3 = 0.00435065, \\ 
W_4 = 0.000536920, \quad W_5 = 0.00431532.
\end{cases}
\]

\item $(v_{1,0}^{B_4} \cup v_{a_2, 1}^{B_4} \cup v_{a_3, 2}^{B_4} \cup v_{a_4, 2}^{B_4} \cup v_{a_5, 3}^{B_4}, \{ W_1, W_2, W_3, W_4, W_5 \})$,
\[
\begin{cases}
a_2 = 0.597599, \quad a_3 = 0.521840, \quad a_4 = 3.07756, \quad a_5 = 1.85216, \\
W_1 = 0.00262253, \quad W_2 = 0.00244207, \quad W_3 = 0.00370960, \\
W_4 = 0.00256322, \quad W_5 = 0.00405640.
\end{cases}
\]
\end{itemize}

At this point we could not have succeeded in finding a $15$-design on $\mathbb{S}^3$ with more than $5$ proper orbits, though there actually exist $15$-designs with negative weights $W_i$ (see for example Keast~\cite{Keast1987}). 
The following is a challenging open question, which is left for future work.

\begin{problem}\label{prob:15design}
Does there exist a weighted $15$-design
on $\mathbb{S}^3$ of type (\ref{eq:invariant0}) with more than $5$ proper orbits?
\end{problem}

The situation is quite different in dimensions $3$ and $4$. Indeed, we obtain weighted $9$-designs with two proper orbits, as already seen in
Section~\ref{subsect:2orb_deg9} (Table~\ref{tbl:weighted 9-design}). 
As in the four-dimensional case, one can obtain weighted $11$-designs on $\mathbb{S}^{2}$ with $3$ proper orbits, e.g.
\begin{itemize}
\item $(v_{a_1, 1}^{B_3} \cup v_{a_2, 1}^{B_3} \cup v_{a_3, 2}^{B_3}, \{ W_1,  W_2, W_3 \})$, 
\[
\begin{cases}
a_1 = 1.51610, \quad a_2 = 4.44671, \quad a_3 = 1.79726, \\
W_1 = 0.0147251, \quad W_2 = 0.00936568, \quad W_3 = 0.0175759.
\end{cases}
\]
\end{itemize}
A remarkable gap between the three and four dimensional cases is that there exists a weighted $17$-design on $\mathbb{S}^2$ of type (\ref{eq:invariant0}). Indeed, the following example with $6$ proper orbits can be found in Table 2.1 of Heo-Xu~\cite[p.275]{HX2001}:
\begin{itemize}
\item 
$(v_{1,0}^{B_3} \cup v_{1, 2}^{B_3} \cup v_{a_3, 1}^{B_3} \cup v_{a_4, 1}^{B_3} \cup v_{a_5, 2}^{B_3} \cup v_{a_6, 2}^{B_3}, \{ W_1, W_2, W_3, W_4, W_5, W_6 \})$, 
\[
\begin{cases}
a_3 = 0.645826, \quad a_4 = 0.228616, \quad a_5 = 2.41421, \quad a_6 = 0.414214, \\
W_1 = 0.00266400, \quad W_2 = 0.00910340, \quad W_3 = 0.00979854, \\
W_4 = 0.00955987, \quad W_5 = 0.0107778, \quad W_6 = 0.00916195.
\end{cases}
\]

\end{itemize}

\section{Conclusion and further remarks}\label{sect:conclusion}

In this paper we have explored a full generalization of the classical corner-vector method for constructing weighted spherical designs, and 
have extensively studied
the existence of designs of type (\ref{eq:invariant0}). We have first established a uniform upper bound for the degree of such designs (Theorem~\ref{thm:main_bound}). Our proof is a hybrid argument combining the cross-ratio comparison technique for Hilbert identity and Xu's theorem on the interrelation between spherical designs and simplicial designs, which appears to be useful in the study of designs of a different type than (\ref{eq:invariant0}).
Moreover, we have made a detailed observation about the existence of $7$-designs with one proper orbit, $9$-designs with two proper orbits, and $11$- and $13$-designs with more than two proper orbits.

In the rest of this paper we explore the connections between integral lattices and some of our designs.
Since the pioneering paper by Venkov~ \cite{V1984}, there have been numerous publications on the construction that derives spherical designs from  shells of
integral lattices. Such {\it lattice-based constructions} are also significant for coding theorists as a spherical analogue of the Assmus-Mattson theorem that directly relates blocks designs to linear codes.
For a brief introduction to the connections among designs, codes and lattices, we refer the reader to Bannai and Bannai~\cite{BB2009}.

We shall make a brief explanation of the lattice-based construction, together with examples for $D_4$-root lattice.
The {\it $D_4$-root lattice}, its dual lattice $D_4^*$ and $\sqrt{2}$-normalization $D_4'$ are defined by
\[
\begin{gathered}
D_4 = \{ (x_1,x_2,x_3,x_4) \in \mathbb{Z}^4 \mid x_1 + x_2 + x_3 + x_4 \equiv 0 \pmod{2} \}, \\
D_4^* = \{ (y_1,y_2,y_3,y_4) \in \mathbb{R}^4 \mid \sum_{i=1}^4 x_iy_i  \in \mathbb{Z}, \; (x_1, x_2, x_3, x_4) \in D_4 \}, \\
D_4' = \sqrt{2} D_4^*.
\end{gathered}
\]
Given a set $E$ of $\mathbb{R}^4$, the {\it shell of (square) norm $m$}, say $E_m$, 
is defined to be the intersection of $E$ and the concentric sphere $\mathbb{S}_m^3$ of radius $m$.
We also use this terminology for the $n$-dimensional case.

\begin{example}\label{ex:D4shell}
Let 
$E = D_4 \cup D_4'$.
Then the first two shells for $D_4$ are given by
\[
(D_4)_2 = (1,1,0,0)^{B_4}, \quad
(D_4)_6 = (2,1,1,0)^{B_4},
\]
which can be represented in terms of generalized corner vectors as $(\sqrt{2}v_{1,1})^{B_4}$ and $(\sqrt{6}v_{2,2})^{B_4}$, respectively.
Similarly, the first two shells of $D_4'$ are given as
\[
\begin{gathered}
(D_4')_2 = (\sqrt{2},0,0,0)^{B_4} \cup (\frac{1}{\sqrt{2}},\frac{1}{\sqrt{2}},\frac{1}{\sqrt{2}},\frac{1}{\sqrt{2}})^{B_4} = (\sqrt{2}v_{1,0})^{B_4} \cup (\sqrt{2}v_{1,3})^{B_4}, \\
(D_4')_6 = (\sqrt{2},\sqrt{2},\sqrt{2},0)^{B_4} \cup (\frac{3}{\sqrt{2}},\frac{1}{\sqrt{2}},\frac{1}{\sqrt{2}},\frac{1}{\sqrt{2}})^{B_4} = (\sqrt{6}v_{1,2})^{B_4} \cup (\sqrt{6}v_{3,3})^{B_4}.
\end{gathered}
\]
\end{example}

The following theorem is proved in de la Harpe et al.~\cite{HPV2007}.

\begin{theorem}[Theorem $D_4$, \cite{HPV2007}]\label{thm:HPV2007}
With the notation given above, the following holds:
\begin{enumerate}
\item[(i)] Any shell $E_{2m}$ is an (equi-weighted) spherical $7$-design.
\item[(ii)] The union of $\frac{1}{\sqrt{2}}E_2$ and $\frac{1}{\sqrt{6}}E_6$ is a weighted spherical $11$-design.
\end{enumerate}
\end{theorem}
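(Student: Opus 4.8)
The plan is to handle both parts through Sobolev's theorem (Theorem~\ref{thm:Sobolev}), which reduces each design property to the vanishing of finitely many $B_4$-invariant harmonic sums; throughout $n=4$, so we work on $\mathbb{S}^3$. For part (i), I would first note that every shell $E_{2m}$ is $B_4$-invariant, and in fact invariant under the full automorphism group $\mathrm{Aut}(D_4)=W(F_4)$: a lattice automorphism preserves the dual lattice, hence preserves $D_4'=\sqrt2 D_4^*$ and the union $E$. After normalizing $E_{2m}$ to $\mathbb{S}^3$ we thus obtain a $B_4$-invariant (indeed $W(F_4)$-invariant) union of generalized corner-vector orbits, and by Theorem~\ref{thm:Sobolev} together with Corollary~\ref{cor:dimension4-8} it is an equi-weighted $7$-design if and only if the equi-weighted sums of $f_4$ and $f_6$ over it vanish, all odd-degree invariants being zero by central symmetry. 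The extra $W(F_4)$-symmetry is the conceptual engine: the fundamental invariant degrees of $W(F_4)$ are $2,6,8,12$, so $\Harm_4(\mathbb{S}^3)^{W(F_4)}=0$ and the degree-$4$ condition is automatic for \emph{every} shell, leaving a single genuine obstruction in degree $6$.

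The hard part of (i) is that the claim is uniform in $m$, so the orbitwise evaluation afforded by Lemma~\ref{lem:vas-FullySymmetricDesign} cannot finish the job by itself. Here I would pass to harmonic theta series: for the degree-$6$ harmonic invariant $f_6$, the generating function $\sum_{m}\bigl(\sum_{x\in E_{2m}}f_6(x)\bigr)q^{m}$ is a modular form of weight $n/2+6=8$ for a suitable congruence group attached to $E$, and is in fact a cusp form. Showing that the relevant space of cusp forms is trivial forces every coefficient, i.e. every shell sum of $f_6$, to vanish; combined with the automatic degree-$4$ vanishing this proves part (i). I expect this modularity-and-dimension-count to be the main obstacle, since it is the one step requiring genuinely new input beyond the corner-vector bookkeeping of Section~\ref{sect:Preliminary}.

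For part (ii), I would reduce to a finite linear-algebra computation. By Example~\ref{ex:D4shell} the normalized shells decompose as $\tfrac1{\sqrt2}E_2=v_{1,0}^{B_4}\cup v_{1,1}^{B_4}\cup v_{1,3}^{B_4}$ and $\tfrac1{\sqrt6}E_6=v_{1,2}^{B_4}\cup v_{2,2}^{B_4}\cup v_{3,3}^{B_4}$, and I seek shell-uniform weights $W_1$ on the first and $W_2$ on the second. By part (i) each shell is already a $7$-design, so the conditions coming from $f_4$ and $f_6$ hold for any $W_1,W_2$; for an $11$-design it then remains only to annihilate the degree-$8$ and degree-$10$ invariants $f_{8,1},f_{8,2},f_{10,1}$, odd degrees and degree $12>11$ being irrelevant. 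Evaluating these on each orbit via Lemma~\ref{lem:vas-FullySymmetricDesign} and forming the two shell sums yields vectors $\mathbf{s}_2,\mathbf{s}_6\in\mathbb{R}^3$, and the design exists precisely when $W_1\mathbf{s}_2+W_2\mathbf{s}_6=0$ admits a solution with $W_1,W_2>0$. The $W(F_4)$-symmetry explains why this apparently over-determined system is solvable: since $\Harm_{10}(\mathbb{S}^3)^{W(F_4)}=0$ the degree-$10$ component of each $\mathbf{s}_i$ vanishes, and since $\Harm_8(\mathbb{S}^3)^{W(F_4)}$ is one-dimensional the two degree-$8$ components of each $\mathbf{s}_i$ are proportional to a single $F_4$-invariant evaluation, so $\mathbf{s}_2$ and $\mathbf{s}_6$ are parallel. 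One finally checks that the resulting proportionality constant has the sign making $W_1,W_2>0$, which completes the proof.
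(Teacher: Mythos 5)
The paper itself does not prove Theorem~\ref{thm:HPV2007}: it quotes the result from de la Harpe--Pache--Venkov~\cite{HPV2007}, with Remark~\ref{rem:D4shell} as its only gloss, so your attempt is an independent reconstruction in the spirit of the cited paper's harmonic-theta-series method. Unfortunately the decisive step of your part (i) fails as stated. You claim that $\sum_m\bigl(\sum_{x\in E_{2m}}f_6(x)\bigr)q^m$ lies in a space of cusp forms that is \emph{trivial}, so that all coefficients vanish. That space is not trivial: for degree-$6$ harmonics in $n=4$ variables the relevant space is $S_8(\Gamma_0(2))$, which is one-dimensional, spanned by $(\eta(\tau)\eta(2\tau))^8=q-8q^2+\cdots$. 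Worse, the theta series of the single lattice $D_4$ with coefficient $f_6$ is a \emph{nonzero} element of it: the first shell of $D_4$ alone is the $24$-cell $\sqrt{2}\,v_{1,1}^{B_4}$, and by Lemma~\ref{lem:vas-FullySymmetricDesign} one has $f_6(v_{1,1})=-1$, so that shell sum equals $2^3\cdot 24\cdot(-1)\neq 0$; shells of $D_4$ by themselves are \emph{not} $7$-designs. Hence no dimension count applied lattice-by-lattice can prove (i); what makes (i) true is the cancellation $\theta_{D_4,f_6}=-\theta_{D_4',f_6}$ between the two constituents of $E$. The repair stays close to your idea: $\theta_{E,f_6}=\theta_{D_4,f_6}+\theta_{D_4',f_6}$ lies in the one-dimensional space $S_8(\Gamma_0(2))$ whose generator has nonvanishing coefficient of $q$, so $\theta_{E,f_6}\equiv 0$ as soon as its own first coefficient vanishes, i.e.\ as soon as the single shell $E_2$ (the rescaled $F_4$ root system $v_{1,0}^{B_4}\cup v_{1,1}^{B_4}\cup v_{1,3}^{B_4}$) is a $7$-design; that is the finite check $8\cdot 1+24\cdot(-1)+16\cdot 1=0$ for $f_6$, the degree-$4$ condition $8\cdot 1+24\cdot 0+16\cdot(-\tfrac12)=0$ being automatic from your (correct) observation that $\Harm_4(\mathbb{S}^3)^{W(F_4)}=0$.

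Your part (ii) is structurally sound and conceptually nicer than a bare citation: the facts $\Harm_{10}(\mathbb{S}^3)^{W(F_4)}=0$ and $\dim\Harm_8(\mathbb{S}^3)^{W(F_4)}=1$ do collapse the system $W_1\mathbf{s}_2+W_2\mathbf{s}_6=0$ to a single scalar equation $W_1\lambda_2+W_2\lambda_6=0$, where $\lambda_2,\lambda_6$ are the two shells' evaluations of the $F_4$-invariant harmonic of degree $8$. But it is incomplete at exactly the point where the geometry (rather than symmetry) enters: positive weights exist if and only if $\lambda_2\lambda_6<0$, and you defer this sign verification without performing it. Since everything else in (ii) is pure representation theory, that one computation --- a finite evaluation, via Lemma~\ref{lem:vas-FullySymmetricDesign}, of the degree-$8$ sums over the six $B_4$-orbits $v_{1,0},v_{1,1},v_{1,3},v_{1,2},v_{2,2},v_{3,3}$ --- carries the entire existence claim and cannot be omitted; moreover (ii) inherits the gap of (i), since it invokes the $7$-design property of both shells. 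For comparison, the paper's Remark~\ref{rem:D4shell} simply identifies (i) with Bajnok-type corner-vector results and (ii) with Schur's formula (Example~\ref{ex:spherical_dim4}); your route, once the two gaps above are filled, is self-contained and explains \emph{why} only one scalar obstruction survives, which the citation route does not.
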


\begin{remark}\label{rem:D4shell}
Theorem~\ref{thm:HPV2007}~(i) is a corollary of Bajnok's theorem (Theorem~\ref{thm:Bajnok}), and Theorem~\ref{thm:HPV2007}~(ii) is just Schur's formula described in Example~\ref{ex:spherical_dim4}.
\end{remark}

We now clarify the connection between our $7$-design $v_{2,8}^{B_{16}}$ (see Remark~\ref{thm:Tanino}) and a certain integral lattice called the {\it Barnes-Wall lattice} in $\mathbb{R}^{16}$.
The Barnes-Wall lattice in $\mathbb{R}^2$ is the standard $\mathbb{Z}^2$-lattice, and the Barnes-Wall lattice in $\mathbb{R}^4$ is just the $D_4$-root lattice. 
In general the Barnes-Wall lattice in $\mathbb{R}^{2^k}$ can be realized as the rational part of the lattice $M_1^{\otimes_k}$, where $M_1$ is the set of all $\mathbb{Z}[\sqrt{2}]$-integer combinations of $(\sqrt{2},0)$ and $(1,1)$
(see~\cite[Theorem 2.1]{NRS2002} for the details).
In particular the lattice in $\mathbb{R}^{16}$ is generated by the rows of the matrix
\begin{equation*}
X =
\left[\begin{array}{cccccccccccccccc}
4&0&0&0&0&0&0&0&0&0&0&0&0&0&0&0 \\
4&4&0&0&0&0&0&0&0&0&0&0&0&0&0&0 \\
4&0&4&0&0&0&0&0&0&0&0&0&0&0&0&0 \\
2&2&2&2&0&0&0&0&0&0&0&0&0&0&0&0 \\
4&0&0&0&4&0&0&0&0&0&0&0&0&0&0&0 \\
2&2&0&0&2&2&0&0&0&0&0&0&0&0&0&0 \\
2&0&2&0&2&0&2&0&0&0&0&0&0&0&0&0 \\
2&2&2&2&2&2&2&2&0&0&0&0&0&0&0&0 \\
4&0&0&0&0&0&0&0&4&0&0&0&0&0&0&0 \\
2&2&0&0&0&0&0&0&2&2&0&0&0&0&0&0 \\
2&0&2&0&0&0&0&0&2&0&2&0&0&0&0&0 \\
2&2&2&2&0&0&0&0&2&2&2&2&0&0&0&0 \\
2&0&0&0&2&0&0&0&2&0&0&0&2&0&0&0 \\
2&2&0&0&2&2&0&0&2&2&0&0&2&2&0&0 \\
2&0&2&0&2&0&2&0&2&0&2&0&2&0&2&0 \\
1&1&1&1&1&1&1&1&1&1&1&1&1&1&1&1 \\
\end{array} \right].
\end{equation*}

\begin{theorem}\label{thm:BW16}
The $7$-design $(8 \sqrt{3} v_{2,8})^{B_{16}}$
is a subset of some shell of the Barnes-Wall lattice in $\mathbb{R}^{16}$.
\end{theorem}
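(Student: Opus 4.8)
The plan is to reduce the assertion to the single containment $4\mathbb{Z}^{16}\subseteq\Lambda$, where $\Lambda$ denotes the Barnes--Wall lattice spanned by the rows of $X$. First I would record the explicit scaling: since $v_{2,8}=\frac{1}{\sqrt{12}}(2,1,\ldots,1,0,\ldots,0)$ (with eight $1$'s and seven $0$'s) and $8\sqrt{3}/\sqrt{12}=4$, one has $8\sqrt{3}\,v_{2,8}=(8,4,\ldots,4,0,\ldots,0)$, a vector with a single entry $8$, eight entries $4$ and seven entries $0$, of square norm $8^2+8\cdot 4^2=192$. Every element of the orbit $(8\sqrt{3}\,v_{2,8})^{B_{16}}$ is obtained from this vector by permuting coordinates and changing signs, so each such element again has square norm $192$; thus it suffices to prove that the whole orbit is contained in $\Lambda$, for then it lies in the shell $\Lambda_{192}$.

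The key observation is that every coordinate of $8\sqrt{3}\,v_{2,8}$ is divisible by $4$, and this property is preserved by all signed coordinate permutations. Hence the entire orbit is contained in the sublattice $4\mathbb{Z}^{16}$, which---unlike $\Lambda$ itself---is manifestly $B_{16}$-invariant. Consequently the theorem follows once I show $4\mathbb{Z}^{16}\subseteq\Lambda$, i.e.\ that $4e_i\in\Lambda$ for every standard basis vector $e_i$. This I would verify directly from the lower-triangular generator matrix $X$ by a short induction on $i$: processing the rows in order, each row with diagonal entry $4$ (rows $1,2,3,5,9$) has its remaining nonzero entries equal to $4$ in earlier coordinates, while each row with diagonal entry $2$ has all its nonzero entries equal to $2$; in either case $\mathrm{row}_i$ or $2\,\mathrm{row}_i$ equals $4e_i$ plus an integer combination of the $4e_j$ with $j<i$, and the final row $\mathbf{1}$ gives $4e_{16}=4\cdot\mathbf{1}-\sum_{i<16}4e_i$. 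Since each step uses only previously constructed basis multiples, all $4e_i$ lie in $\Lambda$, and therefore $4\mathbb{Z}^{16}\subseteq\Lambda$.

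Combining the two steps, the orbit $(8\sqrt{3}\,v_{2,8})^{B_{16}}\subseteq 4\mathbb{Z}^{16}\subseteq\Lambda$, and every orbit element has square norm $192$, so the orbit is a subset of the shell $\Lambda_{192}$, as claimed. Conceptually, the inclusion $4\mathbb{Z}^{16}\subseteq\Lambda$ is exactly the statement that $\Lambda$ is a Construction-D lattice with base level $4\mathbb{Z}^{16}$ (one checks from $X$ that $\Lambda=RM(0,4)+2\,RM(2,4)+4\mathbb{Z}^{16}$, whose covolume $2^{32}/2^{1+11}=2^{20}$ matches $\det X$), but the elementary row computation makes the proof self-contained.

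The main obstacle---really a pitfall to avoid rather than a genuine difficulty---is the temptation to argue by symmetry that a single membership $8\sqrt{3}\,v_{2,8}\in\Lambda$ already forces the whole orbit into $\Lambda$. This fails, because the Barnes--Wall lattice is \emph{not} invariant under the full hyperoctahedral group $B_{16}$: its automorphism group has order roughly $10^{11}$, far smaller than $|B_{16}|=2^{16}\cdot 16!\approx 10^{18}$. The divisibility-by-$4$ observation is precisely what circumvents this, by transferring the required symmetry from $\Lambda$ to the genuinely $B_{16}$-invariant sublattice $4\mathbb{Z}^{16}$.
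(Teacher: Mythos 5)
Your proof is correct and follows essentially the same route as the paper: both arguments hinge on showing that $4e_1,\ldots,4e_{16}$ are integer combinations of the rows of the generator matrix $X$, so that the orbit $(8\sqrt{3}\,v_{2,8})^{B_{16}}=(8,4,\ldots,4,0,\ldots,0)^{B_{16}}$, having all coordinates divisible by $4$, lies in the lattice and hence (being of constant norm) in a single shell. Your version merely makes explicit what the paper leaves terse --- the row-by-row verification that $4\mathbb{Z}^{16}\subseteq\Lambda$ and the norm computation $192$ --- which is a welcome expansion, not a different method.
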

\begin{proof}
We note that $4e_1,\ldots,4e_{16}$, where $e_1,\ldots,e_{16}$ are the standard basis vectors, are all integer combinations of the rows of the generator matrix $X$.
The orbit $(8 \sqrt{3} v_{2,8})^{B_{16}} = (8, \underbrace{4, \ldots, 4}_{8 \text{ times }}, 0, \ldots, 0)^{B_{16}}$
is thus included in some shell of the lattice.
\end{proof}

Next we come to the connection between our $7$-design $v_{2,11}^{B_{23}}$ (see Remark~\ref{thm:Tanino}) and a certain integral lattice in $\mathbb{R}^{23}$.
The {\it Leech lattice} $\Lambda_{24}$ is an even unimodular lattice, which often appears to be the densest sphere packing in $\mathbb{R}^{24}$.
As briefly explained in~\cite[Chapter~24]{CS1999}, 
the lattice $\Lambda_{24}$ has an explicit expression as
\begin{equation*}
\begin{gathered}
\Lambda_{24} =
\Big\{ \frac{1}{\sqrt{8}} (2\boldsymbol{c} + 4\boldsymbol{x}) \mid \boldsymbol{c} \in G_{24} \pmod{2},\; \boldsymbol{x} \in 
\mathbb{Z}^{24}
\ \text{with} \ \sum_{i=1}^{24} x_i \equiv 0 \pmod{2} \Big\} \\
\cup
\Big\{ \frac{1}{\sqrt{8}} (\boldsymbol{1} + 2\boldsymbol{c} + 4\boldsymbol{x}) \mid \boldsymbol{c} \in G_{24} \pmod{2},\; \boldsymbol{x} \in 
\mathbb{Z}^{24}
\ \text{with} \ \sum_{i=1}^{24} x_i \equiv 0 \pmod{2} \Big\}
\end{gathered}
\end{equation*}
where $\boldsymbol{1}$ is the $24$-dimensional all-one vector and
$G_{24}$ is the set of all $24$-dimensional vectors, considered as vectors in $\mathbb{R}^{24}$, of the \textit{extended Golay codes} over $\mathbb{F}_2$; see, for example, van Lint and Wilson~\cite[\S~20]{LW2001} for the definition of the (extended) Golay code over $\mathbb{F}_2$.
The minimal shell consists of $1104$ points of type $(\pm 4^2, 0^{22})$, $97152$ points of type $(\pm 2^8, 0^{16})$, $98304$ points of type $(\mp 3^1, \pm 1^{23})$, and totally $196560$ points.

The {\it shorter Leech lattice}, $O_{23}$, is the unique
odd (up to isometry)
unimodular lattice 
with minimal norm $3$
in $\mathbb{R}^{23}$~\cite[Chapter~19]{CS1999}. 
Given a minimal vector $ v \in \Lambda_{24}$ of norm $4$, 
the lattice $O_{23}$ can also be identified as the orthogonal projection of the set of points in $\Lambda_{24}$ that have an even inner product with $v$ onto
\[
v^{\perp} = \{ u \in \mathbb{R}^{24} \mid \sum_{i=1}^{24} u_iv_i = 0 \};
\]
see for example~\cite[p.179]{CS1999}.

Consider a hyperplane $H$ given by
\begin{equation*}
    H := \{ 
    (x_1,\ldots,x_{23},0) \in \mathbb{R}^{24} \mid x_1,\ldots,x_{23} 
\in \mathbb{R}\}.
\end{equation*}
Then, a linear transformation $\Upsilon:
H \rightarrow \mathbb{R}^{24}$ 
is defined as follows:
\begin{equation}\label{eq:trans1} 
\Upsilon((x_1, \ldots, x_{23}, 0)) 
:= 
\left (
\frac{x_1}{\sqrt{2}}, \frac{x_1}{\sqrt{2}}, 
\frac{x_2 - x_3}{\sqrt{2}}, \frac{x_2 + x_3}{\sqrt{2}}, \ldots, 
\frac{x_{22} - x_{23}}{\sqrt{2}}, \frac{x_{22} + x_{23}}{\sqrt{2}}
\right ) \in \mathbb{R}^{24}. 
\end{equation}
Since it can be easily verified that $\Upsilon$ preserves inner products, 
it follows that $\Upsilon$ is an orthogonal transformation.

\begin{theorem}\label{thm:SLL23}
With the orthogonal transformation $\Upsilon$, the point set $\{
\Upsilon((\boldsymbol{y},0))\mid 
\boldsymbol{y} \in 
(4\sqrt{15}v_{2,11})^{B_{23}}
\}$ is a subset of some 
shell of the shorter Leech lattice 
$O_{23}$.
\end{theorem}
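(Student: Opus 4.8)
The plan is to prove the statement by showing that every image point $\Upsilon((\mathbf{y},0))$ already lies \emph{inside} the Leech lattice $\Lambda_{24}$ and, in fact, inside the hyperplane $v^{\perp}$ used to define $O_{23}$; membership in a \emph{single} shell will then follow for free from the orthogonality of $\Upsilon$. Concretely, I would first make the orbit explicit. Since $v_{2,11}=\frac{1}{\sqrt{15}}(2,1,\ldots,1,0,\ldots,0)$ with eleven $1$'s and eleven $0$'s, we have $4\sqrt{15}\,v_{2,11}=(8,4,\ldots,4,0,\ldots,0)$, so every $\mathbf{y}\in(4\sqrt{15}v_{2,11})^{B_{23}}$ has one entry $\pm 8$, eleven entries $\pm 4$ and eleven entries $0$; in particular all coordinates of $\mathbf{y}$ are multiples of $4$, and $|(\mathbf{y},0)|^{2}=64+11\cdot 16=240$.

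Next I would clear the normalization in $\Upsilon$ to detect a lattice point. Putting $\mathbf{w}=(y_1,y_1,\,y_2-y_3,\,y_2+y_3,\ldots,y_{22}-y_{23},\,y_{22}+y_{23})$, a direct computation gives $\sqrt{8}\,\Upsilon((\mathbf{y},0))=2\mathbf{w}=4\mathbf{x}$, where $\mathbf{x}=\mathbf{w}/2$: since each $y_i$ is a multiple of $4$, every entry of $\mathbf{w}$ is an even integer, so $\mathbf{x}\in\mathbb{Z}^{24}$. Moreover $\sum_i x_i=y_1+\sum_{k=1}^{11}y_{2k}$ is a sum of multiples of $4$, hence even, so the coordinate-sum parity condition holds. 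Taking the zero codeword $\mathbf{c}=\mathbf{0}\in G_{24}$, this realizes $\Upsilon((\mathbf{y},0))=\frac{1}{\sqrt{8}}(2\mathbf{c}+4\mathbf{x})$ as a point of $\Lambda_{24}$.

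I would then pick the norm-$4$ minimal vector $v=\frac{1}{\sqrt{8}}(4,-4,0,\ldots,0)$, whose square norm is $\tfrac{1}{8}(16+16)=4$, and which realizes $O_{23}$ as the orthogonal projection of $\{u\in\Lambda_{24}\mid \langle u,v\rangle \text{ even}\}$ onto $v^{\perp}$. Because the first two coordinates of $\Upsilon((x_1,\ldots,x_{23},0))$ are both $x_1/\sqrt{2}$, the image of the whole hyperplane $H$ lies in $v^{\perp}=\{u : u_1=u_2\}$; hence $\langle \Upsilon((\mathbf{y},0)),v\rangle=0$ is even and the defining projection fixes $\Upsilon((\mathbf{y},0))$. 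Combined with the previous paragraph, this yields $\Upsilon((\mathbf{y},0))\in O_{23}$ for every orbit point $\mathbf{y}$. Finally, since $\Upsilon$ is orthogonal, each image has square norm $240$, so the whole set lies in the single shell $(O_{23})_{240}$.

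The step I expect to be the main obstacle is the faithful bookkeeping of the $1/\sqrt{8}$ scaling against the projective definition of $O_{23}$: one must simultaneously verify that $\Upsilon((\mathbf{y},0))$ meets the coordinate-sum parity and zero-codeword conditions placing it in $\Lambda_{24}$, and that it already sits in $v^{\perp}$ so that the projection in the definition of $O_{23}$ acts as the identity rather than genuinely collapsing a coordinate. Once $v$ is chosen so that $\Upsilon(H)\subseteq v^{\perp}$, the remaining verifications are routine.
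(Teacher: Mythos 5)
Your proposal is correct and follows essentially the same route as the paper: the same choice of minimal vector $v=\frac{1}{\sqrt{8}}(4,-4,0,\ldots,0)$, the same verification that the image points lie in $\Lambda_{24}$ (zero Golay codeword plus the even coordinate-sum condition), and the same observation that equality of the first two coordinates places the images in $v^{\perp}$, so the defining projection of $O_{23}$ fixes them. If anything, you are slightly more explicit than the paper on the final step, pinning down the common square norm $240$ so that the images lie in the single shell $(O_{23})_{240}$.
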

\begin{proof}
For $v = \frac{1}{\sqrt{8}} (4, -4, 0, \ldots, 0) \in \Lambda_{24}$ with $\| v \|^2 = 4$, we set
\begin{align*}
v^{\perp} &= \left \{ \boldsymbol{x} = (x_1, \ldots, x_{24}) \in \Lambda_{24} \mid x_1 - x_2 = 0 \right \}.
\end{align*}
The orthogonal projection onto $v^{\perp}$ is defined by
\begin{align*}
\pi_{v^{\perp}}(\boldsymbol{t}) := \boldsymbol{t} - \frac{\langle \boldsymbol{t}, v \rangle}{\| v \|^2} v 
= \boldsymbol{t} - 
\frac{t_1 - t_2}{\sqrt{8}} v
\end{align*}
for $\boldsymbol{t} = (t_1, \ldots, t_{24}) \in \mathbb{R}^{24}$.
By the definition of the shorter Leech lattice $O_{23}$, 
we have
\begin{align*}
O_{23} = \left \{ 
\pi_{v^{\perp}}(\boldsymbol{x}) \mid \boldsymbol{x} \in \Lambda_{24},\; \langle \boldsymbol{x}, v \rangle \in 2 \mathbb{Z}
\right \} 
&\supset \left \{ 
\pi_{v^{\perp}}(\boldsymbol{x}) \mid \boldsymbol{x} = (x_1, \ldots, x_{24}) \in \Lambda_{24},\; x_1 = x_2
\right \} \\
&= \left \{ 
 (x_1, x_1, x_3, \ldots, x_{24})
 \in \Lambda_{24} \right \}.
\end{align*}

Our goal is to show that the point set $\{(\boldsymbol{y},0) \mid \boldsymbol{y} \in 
(4\sqrt{15}v_{2,11})^{B_{23}}
\} \subset H$ is transformed under the orthogonal transformation $\Upsilon$
into $\{(x_1,x_1,x_3,\ldots,x_{24}) \in \Lambda_{24}\} \; (\subset 
O_{23})$.

By noting 
\[ 
(4 \sqrt{15} v_{2, 11})^{B_{23}} = 
\frac{1}{\sqrt{8}} \big (16 \sqrt{2}, \underbrace{8 \sqrt{2}, \ldots, 8 \sqrt{2}}_{11 \text{ times }}, 0, \ldots, 0 \big )^{B_{23}}
\] 
and letting $(y_1, \ldots, y_{24}) \in 
\{(\boldsymbol{y},0) \mid \boldsymbol{y} \in (4\sqrt{15}v_{2,11})^{B_{23}}
\}$, 
we observe from equation (\ref{eq:trans1}) that each of $y_i/\sqrt{2}$ and $(y_i \pm y_{j})/\sqrt{2}$ belongs to the set 
$\frac{1}{\sqrt{8}} \{ 0, \pm 8,  \pm 16, \pm 24 \}$.
Hence, each resulting vector 
\[
\boldsymbol{z} \in \{
\Upsilon((\boldsymbol{y},0))\mid 
\boldsymbol{y} \in (4\sqrt{15}v_{2,11})^{B_{23}}
\}
\]
lies in $\frac{1}{\sqrt{8}} \mathbb{Z}^{24}$, and in fact we can write $\boldsymbol{z} = 4 \boldsymbol{z}'/\sqrt{8}$  
for some $\boldsymbol{z}' = (z'_1, z'_1, z'_3, \ldots, z'_{24})\in \mathbb{Z}^{24}$, with all $z'_i \in \{ 0, \pm 2, \pm 4, \pm 6 \}$.

Since all entries of $\boldsymbol{z}'$ are even integers, their sum is also even,  
and thus $\boldsymbol{z}$ satisfies the parity condition required for membership in $\Lambda_{24}$.  
Therefore, we conclude that $\boldsymbol{z} \in \Lambda_{24}$, 
which completes the proof.
\end{proof}

\section*{Acknowledgements}
This is a subsequent work of Bajnok~\cite{Bajnok2006} and Sawa and Xu~\cite{SX2013}, which was first started by the first and fourth authors. 
They would like to thank Yuan Xu and Tsuyoshi Miezaki for many valuable comments and suggestions to the former version of this paper.
We are grateful to the anonymous referees and the editor for their careful reading and constructive feedback, which have helped to improve the quality of the manuscript.

\appendix
\section{Proof of Proposition~\ref{prop:Hirao2024-2}}

In this section,
we give a proof of 
Proposition~\ref{prop:Hirao2024-2}.
Suppose $n\geq4$.
    We solve the following system of linear equations
\begin{equation*}
    \begin{bmatrix}
        1                     & 1                     \\
        f_{4}(v_{a_1, s_1})   & f_{4}(v_{a_2, s_2})   \\
        f_{6}(v_{a_1, s_1})   & f_{6}(v_{a_2, s_2})   \\
        f_{8,1}(v_{a_1, s_1}) & f_{8,1}(v_{a_2, s_2}) \\
        f_{8,2}(v_{a_1, s_1}) & f_{8,2}(v_{a_2, s_2})
    \end{bmatrix}
    \begin{bmatrix}
        \tilde{W}_1 \\ \tilde{W}_2
    \end{bmatrix}
    = \begin{bmatrix}
        1 \\ 0 \\ 0 \\ 0 \\ 0
    \end{bmatrix},
\end{equation*}
where $\tilde{W}_i = W_i |v_{a_i, s_i}^{B_n}| > 0 \; (i = 1, 2)$.
As in the proof of Proposition~\ref{prop:Hirao2024-1}, we divide the situation into five cases:
\[
    \begin{cases}
        (a) \; \text{$f_{4}(v_{a_1, s_1}) \neq f_{4}(v_{a_2, s_2})$}, \\
        (b) \; \text{$f_{4}(v_{a_1, s_1}) = f_{4}(v_{a_2, s_2}) \;, \;  f_{6}(v_{a_1, s_1}) \neq f_{6}(v_{a_2, s_2})$}, \\
        (c) \; \text{$f_{4}(v_{a_1, s_1}) = f_{4}(v_{a_2, s_2}) \;,\;  f_{6}(v_{a_1, s_1}) =  f_{6}(v_{a_2, s_2}) \;, \; f_{8, 1}(v_{a_1, s_1}) \neq  f_{8,1}(v_{a_2, s_2})$}, \\
        (d) \; \text{$f_{4}(v_{a_1, s_1}) = f_{4}(v_{a_2, s_2}) \;,\;  f_{6}(v_{a_1, s_1}) =  f_{6}(v_{a_2, s_2})$}, \\ \qquad \qquad 
        \text{$f_{8, 1}(v_{a_1, s_1}) =  f_{8,1}(v_{a_2, s_2})
        \;,\;  f_{8, 2}(v_{a_1, s_1}) \neq  f_{8,2}(v_{a_2, s_2})$}, \\
        (e) \; \text{$f_{4}(v_{a_1, s_1}) = f_{4}(v_{a_2, s_2}) \;,\; f_{6}(v_{a_1, s_1}) =  f_{6}(v_{a_2, s_2})$}, \\  \qquad \qquad 
        \text{$f_{8, 1}(v_{a_1, s_1}) =  f_{8,1}(v_{a_2, s_2})
                \;,\;  f_{8, 2}(v_{a_1, s_1}) =  f_{8,2}(v_{a_2, s_2})$}.
    \end{cases}
\]
In Case (a), the augmented coefficient matrix has rank $2$ and can be reduced to

\begin{align*}
\begin{bmatrix}
1 & 0 &  \frac{f_{4}(v_{a_2, s_2})}{f_{4}(v_{a_2, s_2}) - f_{4}(v_{a_1, s_1})}  \\
0 & 1 &  - \frac{f_{4}(v_{a_1, s_1})}{f_{4}(v_{a_2, s_2}) - f_{4}(v_{a_1, s_1})} \\
0 & 0  & \frac{G_{4, 6}(a_1, a_2, s_1, s_2)}{f_{4}(v_{a_2, s_2}) - f_{4}(v_{a,s})}  \\
0 & 0  & \frac{G_{4, 8,1}(a_1, a_2, s_1, s_2)}{f_{4}(v_{a_2, s_2}) - f_{4}(v_{a,s})}   \\
0 & 0  & \frac{G_{4, 8,2}(a_1, a_2, s_1, s_2)}{f_{4}(v_{a_2, s_2}) - f_{4}(v_{a,s})}  
\end{bmatrix}. 
\end{align*}
Thus, in Case (a), a weighted $9$-design of type (\ref{eq:invariant2orbit}) exists if and only if
\begin{align*}
\begin{cases}
\frac{f_{4}(v_{a_2, s_2})}{f_{4}(v_{a_2, s_2}) - f_{4}(v_{a_1, s_1})} > 0, \\
- \frac{f_{4}(v_{a_1, s_1})}{f_{4}(v_{a_2, s_2}) - f_{4}(v_{a_1, s_1})} > 0, \\
G_{4, 6}(a_1, a_2, s_1, s_2) = G_{4, 8,1}(a_1, a_2, s_1, s_2) = G_{4, 8,2}(a_1, a_2, s_1, s_2) = 0.
\end{cases} \notag 
\end{align*}
Since $f_{4}(v_{a_1, s_1}) \neq f_{4}(v_{a_2, s_2})$,\ this is equivalent to
\begin{align*}
    \begin{cases}
                  f_{4}(v_{a_1, s_1}) f_{4}(v_{a_2, s_2}) < 0,\\
                  G_{4, 6}(a_1, a_2, s_1, s_2) = G_{4, 8, 1}(a_1, a_2, s_1, s_2) = G_{4, 8, 2} (a_1, a_2, s_1, s_2) = 0.
    \end{cases}
\end{align*}

Similarly, in each of the remaining three cases (b), (c), (d), the augmented coefficient matrix has rank $2$ and obtain the desired equivalence case.

In the last case, 
Case (e),
the augmented coefficient matrix can be reduced to
\begin{align*}
\begin{bmatrix}
1 & 1 & 1 \\
0 & 0 & - f_{4}(v_{a_1, s_1})    \\
0 & 0 & - f_{6}(v_{a_1, s_1}) \\
0 & 0  & - f_{8,1}(v_{a_1, s_1})  \\
0 & 0  & - f_{8,2}(v_{a_1, s_1})
\end{bmatrix}. 
\end{align*}
Thus, a weighted $9$-design of type (\ref{eq:invariant2orbit}) exists if and only if
\begin{equation*}
\begin{cases}
f_{4}(v_{a_1, s_1}) = f_{4}(v_{a_2, s_2}) =
f_{6}(v_{a_1, s_1}) = f_{6}(v_{a_2, s_2}) = 0, \\
f_{8,1}(v_{a_1, s_1}) = f_{8,1}(v_{a_2, s_2}) =
f_{8,2}(v_{a_1, s_1}) = f_{8,2}(v_{a_2, s_2}) = 0.
\end{cases}
\end{equation*}

It remains to consider the case of $n = 3$.
In this case we may ignore $f_{8,2}$ and so reduce the size 
of the augmented coefficient matrix to $4 \times 3$ from $5 \times 3$.
 
\bibliographystyle{abbrv}
\bibliography{TTHS_refs}
\end{document}